\documentclass[lettersize,journal]{IEEEtran}
\usepackage[caption=false,font=normalsize,labelfont=sf,textfont=sf]{subfig}

\usepackage{booktabs}
\usepackage{algorithmic}
\usepackage{algorithm}
\usepackage{amsfonts}
\usepackage{amsmath}
\usepackage{amssymb}
\usepackage{amsthm}
\usepackage{cases}
\usepackage{array}
\usepackage{arydshln}
\usepackage{bm}
\usepackage{cite}
\usepackage{color,xcolor,soul}

\usepackage{comment}
\usepackage{dsfont}
\usepackage{enumitem}
\usepackage{float}
\usepackage{graphicx}
\usepackage{grffile}
\usepackage{hyperref}
\hypersetup{colorlinks,linkcolor=blue,anchorcolor=blue,citecolor=blue}
\usepackage[utf8]{inputenc}
\usepackage[T1]{fontenc}
\usepackage{letltxmacro}
\usepackage{mathrsfs}
\usepackage{mathtools}
\usepackage{multirow}
\usepackage{nicefrac}
\usepackage{subcaption}
\usepackage{tablefootnote}
\usepackage{verbatim}
\usepackage{setspace}
\usepackage{makecell}

\usepackage{todonotes}


\definecolor{myPurple}{RGB}{128,0,128}

\newcommand{\br}{\bm{r}}

\newcommand{\bx}{\bm{x}}

\newcommand{\bB}{\bm{B}}

\newcommand{\bI}{\bm{I}}

\newcommand{\bP}{\bm{P}}

\newcommand{\bU}{\bm{U}}
\newcommand{\bV}{\bm{V}}
\newcommand{\bW}{\bm{W}}
\newcommand{\bX}{\bm{X}}


\newcommand{\bOmega}{\bm{\Omega}}


\newcommand{\cA}{\mathcal{A}}
\newcommand{\cB}{\mathcal{B}}

\newcommand{\cG}{\mathcal{G}}

\newcommand{\cL}{\mathcal{L}}

\newcommand{\cN}{\mathcal{N}}

\newcommand{\cS}{\mathcal{S}}

\newcommand{\cW}{\mathcal{W}}
\newcommand{\cX}{\mathcal{X}}
\newcommand{\cY}{\mathcal{Y}}
\newcommand{\cZ}{\mathcal{Z}}




\newcommand{\Matricize}[2]{\mathcal{M}_{#1}\left(#2\right)}

\newcommand{\norm}[1]{\left\lVert#1\right\rVert}
\newcommand{\inner}[2]{\left\langle#1,#2\right\rangle}

\newcommand{\argmin}{\mathop{\mathrm{argmin}}}

\allowdisplaybreaks
\makeatletter

\theoremstyle{plain}
\newtheorem{lemma}{\textbf{Lemma}} 

\newtheorem{theorem}{\textbf{Theorem}}\setcounter{theorem}{0}

\theoremstyle{definition}\newtheorem{definition}{\textbf{Definition}}
 
\theoremstyle{remark}

\newtheorem{property}{\textbf{Property}}

\hyphenation{op-tical net-works semi-conduc-tor IEEE-Xplore}

\begin{document}

\title{Outlier-aware Tensor Robust Principal Component Analysis with Self-guided Data Augmentation}

\author{
  Yangyang Xu,
  Kexin Li,
  Li Yang,
  and You-Wei Wen
  \thanks{This work is supported by the National Natural Science Foundation of China (Grant No. 12361089); the Scientific Research Fund Project of Yunnan Provincial Education Department (Grant No. 2024J0642); the Yunnan Fundamental Research Projects (Grant Nos. 202401AU070104, 202401AU070105); the Scientific Research Fund Project of Yunnan University of Finance and Economics (Grant No. 2024D38).}
  \thanks{Yangyang Xu, Li Yang, and You-Wei Wen (corresponding author) are with the School of Mathematics and Statistics, Hunan Normal University, Changsha 410081, Hunan, China (email: \href{mailto:yangyangxu2002@gmail.com}{yangyangxu2002@gmail.com}; \href{mailto:liyang161029@gmail.com}{liyang161029@gmail.com}; \href{mailto:wenyouwei@gmail.com}{wenyouwei@gmail.com}).}
  \thanks{Kexin Li is with the School of Statistics and Mathematics, Yunnan University of Finance and Economics, Kunming 650221, Yunnan, China (email: \href{mailto:likx1213@163.com}{likx1213@163.com}).}
}

\markboth{Journal of \LaTeX\ Class Files,~Vol.~14, No.~8, August~2021}%
{Shell \MakeLowercase{\textit{et al.}}: A Sample Article Using IEEEtran.cls for IEEE Journals}


\maketitle

\begin{abstract}

Tensor Robust Principal Component Analysis (TRPCA) is a fundamental technique for decomposing multi-dimensional data into a low-rank tensor and an outlier tensor,  yet existing methods relying on sparse outlier assumptions often fail under structured corruptions. 
In this paper, we propose a self-guided data augmentation approach that employs adaptive weighting to suppress outlier influence, reformulating the original TRPCA problem into a standard Tensor Principal Component Analysis (TPCA) problem.
The proposed model involves an optimization-driven weighting scheme that dynamically identifies and downweights outlier contributions during tensor augmentation. 
We develop an efficient proximal block coordinate descent algorithm with closed-form updates to solve the resulting optimization problem, ensuring computational efficiency. 
Theoretical convergence is guaranteed through a framework combining block coordinate descent with majorization-minimization principles.
Numerical experiments on synthetic and real-world datasets, including face recovery, background subtraction, and hyperspectral denoising, demonstrate that our method effectively handles various corruption patterns. 
The results show the improvements in both accuracy and computational efficiency compared to state-of-the-art methods.
\end{abstract}

\begin{IEEEkeywords}
Tensor robust PCA, tensor PCA, data augmentation, outlier-aware strategy.
\end{IEEEkeywords}

\IEEEpeerreviewmaketitle

\section{Introduction}\label{section:introduction}

\IEEEPARstart{T}{he} rapid advancement of information technology has necessitated efficient processing of high-dimensional data across diverse applications, including video analysis, medical imaging, and remote sensing \cite{cai2024robust,jiang2020multi}. While Principal Component Analysis (PCA) \cite{wold1987principal} remains a cornerstone for dimensionality reduction in vectorized data, its inability to capture intricate structural relationships inherent in multi-way datasets, such as images and multi-modal measurements, has become increasingly apparent. 
This limitation has spurred the development of Tensor Principal Component Analysis (TPCA)\cite{tucker1966some,kroonenberg1980principal,kolda2009tensor,cichocki2015tensor}. 

The aim of TPCA lies in extracting a low-rank approximation tensor $\cL \in \mathbb{R}^{n_1 \times n_2 \times n_3}$ from an observed tensor $\cY \in \mathbb{R}^{n_1 \times n_2 \times n_3}$, thereby recovering latent patterns and structures. 
This can be formulated as an optimization problem \cite{liu2021tensors}:
\[
\min_{\operatorname{rank}(\cL)\leq k} \norm{\cL-\cY}^2_F, 
\]
where $\norm{\cdot}_F$ denotes the Frobenius norm, $\operatorname{rank}(\cdot)$ refers to the tensor rank, which varies depending on the chosen tensor decomposition method, and $k$ is the desired rank.

The rank minimization problem is well-known as NP-hard and usually replaced by the tensor nuclear norm minimization problem. 
Commonly employed tensor norms include: the Tensor Nuclear Norm (TNN) \cite{lu2019tensor}, Partial Sum of Tubal Nuclear Norm (PSTNN) \cite{jiang2020multi}, Weighted Tensor Schatten $p$-norm \cite{yang2022nonconvex}, and Weighted Tensor Nuclear Norm (WTNN) \cite{wang2023double}, and so on. 
While these approaches promote low-rankness, they often impose high computational costs due to their reliance on singular value decomposition (SVD).

Factorization-based approaches can avoid the issues associated with these tensor norms and offer both interpretability and physical meaning. 
These methods exploit the explicit structure of tensor decompositions, which allow for a more efficient and interpretable representation of tensor data. 
Representative factorization-based methods include the CANDECOMP/PARAFAC (CP) decomposition \cite{carroll1970analysis,kiers2000towards}, Tucker decomposition \cite{tucker1966some,de2000introduction}, Tensor Train (TT) decomposition \cite{oseledets2011tensor}, Tensor Ring (TR) decomposition \cite{zhao2016tensor}, and Fully-Connected Tensor Network (FCTN) decomposition \cite{zheng2021fully}. 
In this paper, we focus on the Tucker decomposition for tensor representation, where a  tensor $\cL$  is factorized  as $\cL = \left(\bU_1, \bU_2, \bU_3 \right) \cdot \cG$.  Here $\bU_{i} \in \mathbb{R}^{n_{i} \times r_{i}}$ are the factor matrices, $\cG \in \mathbb{R}^{r_{1} \times r_{2} \times r_{3}}$ is the core tensor, and $\br = [r_1, r_2, r_3]$ denotes the Tucker rank. 
Then the TPCA problem is reformulated as a low-rank factorization problem \cite{tucker1966some,kolda2009tensor}:
\begin{equation}\label{eq:TuckerDecompose}
    \min_{\cL = \left(\bU_1, \bU_2, \bU_3 \right) \cdot \cG} \left\| \cL - \cY \right\|_{F}^{2}.
\end{equation}

Traditional TPCA has demonstrated effectiveness in extracting low-dimensional structures from high-order data, with successful applications in image processing \cite{karami2012compression}, signal analysis \cite{cichocki2013tensor}, and data mining \cite{kolda2009tensor}. 
However, the presence of outliers and non-Gaussian noise in real-world measurements presents the challenges for TPCA methods. These corruptions in the observed tensor can distort the underlying low-rank structure, making it difficult to accurately recover the latent low-rank tensor.
Thus the Tensor Robust Principal Component Analysis (TRPCA) model has been developed \cite{lu2019tensor,gao2020enhanced}, 
where the observed tensor $\cX$ is the sum of  the low-rank tensor $\cL$ and the outlier tensor $\cS$ as follows
\[ \cX = \cL + \cS. \]
Traditional approaches employ sparsity-inducing norms such as the $\ell_0$-norm or $\ell_1$-norm to characterize the outlier tensor $\cS$ \cite{huang2015provable,lu2016tensor}. 
Theoretical recovery guarantees for these methods rely on the assumption that outliers are sparse and randomly distributed, with the proportion of non-zero elements in $\cS$ remaining below a specified threshold \cite{huang2015provable, cai2023generalized}.
The TRPCA with $\ell_1$-norm regularization often underperforms as it uniformly processes tensor elements, disregarding the spatial-temporal coherence of structured outliers (e.g., contiguous regions in tensor slices for background subtraction), multi-regularization approaches for modeling $\cS$ have been developed \cite{wang2023double,alawode2025learning}. 

While these advanced methods improve handling of structured outliers, challenges remain when sparsity assumptions are violated. Moreover, when the ratio of non-zero elements in $\cS$ exceeds theoretical sparsity thresholds, the conventional sparse characterization becomes less appropriate, as tensors are typically considered sparse only when non-zero elements constitute a small fraction of the total elements. In such cases, characterizing $\cS$ as a sparse tensor may no longer be appropriate. The limitations of existing sparse outlier modeling strategies motivate the development of alternative approaches to recover $\cL$ without relying on assumptions about the sparsity of $\cS$. 
The outliers in the observed tensor distort the low-rank structure and  influence the factorization process. 

In this paper, we propose a self-guided data augmentation approach that employs adaptive weighting to suppress outlier influence, reformulating the original TRPCA problem into a standard TPCA problem.
First, a weight tensor $\cW$ identifies outliers in the observed tensor $\cX$, with higher weights reflecting greater confidence in entries' alignment with the low-rank structure and lower weights flagging likely corruptions. Second, leveraging $\cW$, an augmented tensor $\cY$ is constructed as a weighted average of $\cX$ and the guidance tensor $\cL$. 
Through this data augmentation, the augmented tensor $\cY$ becomes a low-rank version of $\cX$, where outlier-corrupted entries are suppressed, and reliable entries are enhanced. 
This transformation effectively reformulates the TRPCA problem into a TPCA problem as defined in \eqref{eq:TuckerDecompose}, where the focus turns to recovering the low-rank structure from the augmented tensor. Third, the guidance tensor $\cL$ is refined at each iteration via low-rank factorization of $\cY$, progressively aligning with the underlying low-rank patterns. Finally, residual errors between $\cX$ and the augmented tensor $\cY$ drive updates to the weight tensor $\cW$, enabling automatic emphasis on statistically consistent entries and suppression of outliers. This self-guided process enhances $\cY$'s fidelity to the true low-rank structure across iterations, even when outliers exhibit dense or structured distributions. 
By decoupling outlier suppression from factorization, the framework eliminates dependence on explicit sparsity constraints while maintaining computational tractability through closed-form updates. 

The proposed framework introduces only two additions to standard low-rank tensor factorization: a weight tensor $\cW$ and an augmentation tensor $\cY$. Both components are computed via element-wise operations, avoiding significant computational overhead. Furthermore, each subproblem in the iterative process exhibits a quadratic structure, ensuring closed-form solutions at every step. Unlike tensor norm-based methods that rely on iterative Singular Value Decomposition (SVD) \cite{lu2019tensor,gao2020enhanced}, our factorization-based approach avoids SVD computations, making it particularly suited for large-scale tensor processing. Theoretical guarantees are established through a loss function minimized via block coordinate descent. We prove that the algorithm converges to critical points of the loss function.  

Overall, the contributions of this paper are as follows:
\begin{itemize}
\item We propose a novel outlier-aware TRPCA framework with self-guided data augmentation, leveraging a weight tensor for dynamic outlier identification and an augmented tensor for robust low-rank tensor recovery without sparsity assumptions. This framework transforms the TRPCA problem into standard TPCA by decoupling outlier suppression from low-rank approximation.
\item An efficient alternating minimization algorithm is developed with closed-form updates via proximal block coordinate descent. Leveraging the  block coordinate descent with majorization-minimization principles, the algorithm ensures global convergence under mild conditions with minimal computational overhead. 
\item Experimental results on synthetic and real-world datasets highlight the superior performance and efficiency of our method. On synthetic data, our approach is 20 times faster than the baseline method while achieving higher accuracy. In real-world applications, it performs face restoration in under one second, delivers rapid and top-performing background subtraction, and improves hyperspectral image denoising by 0.5-2 dB in PSNR with high efficiency.
\end{itemize}

The remainder of the paper is arranged as below. Section \ref{section:preliminary} introduces some notations and preliminaries. In Section \ref{section:objective}, the self-guided data augmentation process and the outlier-aware strategy are presented, leading to the proposed model, followed by the optimization algorithm. In Section \ref{section:convergence}, we provide a convergence analysis of the proposed algorithm. Section \ref{section:experiment} presents experimental results on both synthetic and real-world datasets. Finally, Section \ref{section:conclusion} concludes the paper.

\section{Notations and preliminaries}\label{section:preliminary}

We first introduce the notations in the paper. 
We use lowercase letters (e.g., $x$) to denote scalars, boldface lowercase letters (e.g., $\bx$) for vectors, boldface capital letters (e.g., $\bX$) for matrices, and Euler script letters (e.g., $\cX$) for tensors. The identity matrix is denoted by $\bI$, and $\mathbf{1}$ represents a tensor with all entries equal to one. 
The field of real tensors of size $n_1\times n_2\times n_3$ is represented as $\mathbb{R}^{n_1 \times n_2 \times n_3}$, and the $(i, j, k)$-th entry of a 3-order tensor $\cX\in \mathbb{R}^{n_1 \times n_2 \times n_3}$ is expressed as $\cX_{ijk}$ or $[\cX]_{ijk}$. The inner product of two tensors $\cA$ and $\cB$ is defined as $\langle \cA, \cB \rangle = \sum_{i,j,k} \cA_{ijk} \cB_{ijk}$, and the Frobenius norm of a tensor $\cX$ is given by $\|\cX\|_{F} = \sqrt{\langle \cX, \cX \rangle}$. The infinity norm of a tensor $\cX$ is denoted by $\|\cX\|_{\infty}=\max_{i,j,k}|\cX_{ijk}|$. Given a weight tensor $\cW\in \mathbb{R}^{n_1\times n_2\times n_3}$, the weighted Frobenius norm of tensor $\cX$ is defined as $\|\cX\|_{\cW}=\sqrt{\sum_{i,j,k}\cW_{ijk}\cX_{ijk}^{2}}$.

Additional notations include $\bX^{\mathrm{T}}$ for the transpose of a matrix $\bX$, $\bX^{-1}$ for the inverse of a matrix $\bX$, $\lceil x \rceil$ for the ceiling of a number $x$, $\otimes$ for the Kronecker product, and $\odot$ for the element-wise product of matrices or tensors. These notations, along with key definitions and properties introduced below, will be used throughout this paper. 

\begin{definition}[Mode-$k$ matricization \cite{cai2024robust}]
Given a $d$-mode tensor $\cX\in \mathbb{R}^{n_1 \times \cdots \times n_d}$, 
let $s$ denote the lexicographic index of the tuple $(i_1, \cdots, i_{k-1}, i_{k+1}, \cdots, i_d)$. 
The mode-$k$ matricization of $\cX$, 
denoted $\Matricize{k}{\cX} \in \mathbb{R}^{n_k \times \prod_{j \neq k} n_j}$, is defined such that  
\[
\left[\Matricize{k}{\mathcal{X}}\right]_{i_k, s} = \mathcal{X}_{i_1, \dots, i_{k-1}, i_k, i_{k+1}, \dots, i_d}.
\]  

This operation transforms the multi-dimensional tensor into a two-dimensional matrix by keeping only the $k$-th mode as the row indices of the resulting matrix.
\end{definition}

\begin{definition}[Mode-$k$ product \cite{cai2024robust}]
	The mode-$k$ product of a tensor $\cG\in \mathbb{R}^{n_1 \times \cdots \times n_d}$ and a matrix $\bU\in \mathbb{R}^{J \times n_k}$ is denoted by $\cX=\cG \times_{k} \bU$, which is defined as
	\begin{equation*}
		\cX_{i_1,\cdots,i_{k-1},j,i_{k+1},\cdots,i_d}=\sum_{s=1}^{n_{k}}\cG_{i_1,\cdots,i_{k-1},s,i_{k+1},\cdots,i_d}\bU_{js}.
	\end{equation*}
\end{definition}

\begin{definition}[Tucker decomposition and Tucker rank \cite{kolda2009tensor}]\label{def:Tucker decomposition}
	Given a tensor $\cX \in \mathbb{R}^{n_1 \times n_2 \times n_3}$, its Tucker decomposition approximates $\cX$ as a core tensor multiplied by factor matrices along each mode:
	\begin{equation*}
		\cX =\cG \times_1 \bU_1 \times_2 \bU_2 \times_3 \bU_3,
	\end{equation*}
	where $\bU_i \in \mathbb{R}^{n_i \times r_i}$ are the factor matrices with $r_i \leq n_i$, and $\cG \in \mathbb{R}^{r_1 \times r_2 \times r_3}$ is the core tensor. The tuple $\br = [r_1, r_2, r_3]$ is the Tucker rank, specifying the dimensions of $\cG$. For brevity, we denote this as $\cX = \left( \bU_1, \bU_2, \bU_3 \right) \cdot \cG$.
\end{definition}

\begin{property}[Properties of Tucker decomposition \cite{dong2023fast}] \label{property:tucker_decomp}
Let $\cX$ be a tensor with Tucker decomposition $\cX = \left( \bU_1, \bU_2, \bU_3 \right) \cdot \cG$, where $\bU_i$ are factor matrices and $\cG$ is the core tensor. The following properties hold:
\begin{enumerate}[leftmargin=*]
\item The matricization of $\cX$ along mode-$i$ admits the low-rank decomposition
\begin{equation*}
\Matricize{i}{\cX} = \bU_{i} \widehat{\bU}_i^{\mathrm{T}}, \quad i=1,2,3,
\end{equation*}
where
\begin{align*}
\widehat{\bU}_{1} &\coloneqq \left( \bU_3 \otimes \bU_2 \right) \Matricize{1}{\cG}^{\mathrm{T}}, \\
\widehat{\bU}_{2} &\coloneqq \left( \bU_3 \otimes \bU_1 \right) \Matricize{2}{\cG}^{\mathrm{T}}, \\
\widehat{\bU}_{3} &\coloneqq \left( \bU_2 \otimes \bU_1 \right) \Matricize{3}{\cG}^{\mathrm{T}}.
\end{align*}
\item For any tensor $\cY  \in \mathbb{R}^{n_{1} \times n_{2} \times n_{3}}$, we have the inner product equality
\begin{equation*}
\left\langle \left( \bU_1, \bU_2, \bU_3 \right) \cdot \cG, \cY \right\rangle = \left\langle \cG, \left( \bU_1^{\mathrm{T}}, \bU_2^{\mathrm{T}}, \bU_3^{\mathrm{T}} \right) \cdot \cY \right\rangle.
\end{equation*}
\item For any $\bW_i\in \mathbb{R}^{r_i\times r_i}$, 
\begin{align*}
        &\left(\bU_1\bW_1,\bU_2\bW_2,\bU_3\bW_3\right)\cdot\cG \\
        &=\left(\bU_1,\bU_2,\bU_3\right) \cdot\left(\left(\bW_1,\bW_2,\bW_3\right)\cdot\cG\right).
    \end{align*}
\end{enumerate}
\end{property}

\section{Proposed Method}\label{section:objective}


In this section, we present the Self-guided Data Augmented Outlier-aware (SDAO) TRPCA model, which transforms TRPCA into a TPCA problem through self-guided data augmentation that replaces outlier values. 
The framework operates in two modes: (1) an oracle case with known outlier locations, and (2) a practical case employing an outlier-aware strategy to dynamically detect and mitigate outliers.

\subsection{Self-guided Data Augmentation}\label{section:augmentation}

Under oracle case with known outlier locations, we establish the self-guided data augmentation framework.
Let $\bOmega$ denote the complete index set of tensor elements, with $\cN \subset \bOmega$ representing the identified outlier locations. 
We define an indicator tensor $\cW$ that explicitly encodes spatial outlier information:
\begin{equation}\label{eq:W}
	\cW_{ijk} = 
	\begin{cases}
		0, & (i,j,k) \in \cN. \\
		1, & (i,j,k) \notin \cN.
	\end{cases}
\end{equation}

In the TRPCA framework, the non-zero entries of $\cS$ correspond to outliers that  corrupt the tensor rank structure and hinder the standard TPCA implementation. 
To address these challenges, we develop a self-guided data augmentation process that generates an augmented  tensor $\cY$, which combines the observed tensor $\cX$ with a guidance tensor $\widetilde{\cL}$. At identified outlier locations ($ (i,j,k) \in \cN$ and $\cW_{ijk}=0$), values are fully replaced with corresponding entries from $\widetilde{\cL}$. For inlier locations ($ (i,j,k) \notin \cN$ and $\cW_{ijk}=1$), a convex combination balances the original observation $\cX$ with the guidance tensor $\widetilde{\cL}$. 
The augmented tensor construction follows a weighted formulation:
\begin{equation*}
    \cY_{ijk} = 
    \begin{cases}
        \widetilde{\cL}_{ijk}, & \cW_{ijk}=0, \\
        \beta \cX_{ijk} + (1-\beta)\widetilde{\cL}_{ijk}, & \cW_{ijk}=1,
    \end{cases}
\end{equation*}
where $\beta\in(0,1)$ is a weight factor.
This operation can be reformulated as a minimization problem:
\[
\cY =\argmin_{\cY}\left(\lambda \|\cY-\cX\|_{\cW}^2 + \|\widetilde{\cL} - \cY\|_F^2\right).
\]
where $\lambda = \frac{\beta}{1-\beta}$ and $\|\cdot\|_{\cW}$ denotes the weighted Frobenius norm. 
The resulting augmented tensor $\cY$ is then processed through standard TPCA in \eqref{eq:TuckerDecompose} to recover the low-rank component $\cL$, thereby separating outlier removal from low-rank approximation. 
Then, the optimal solution of the TPCA problem replaces $\widetilde{\cL}$ in subsequent iterations. 
This process naturally extends to an iterative refinement scheme:
\begin{equation}\label{updateYL}
\begin{cases} 
\cY_{t+1} = \argmin_{\cY} \left(\lambda \|\cY-\cX\|_{\cW}^2 + \|{\cL}_t - \cY\|_F^2\right), \\
\cL_{t+1} = \argmin_{\cL=(\bU_1,\bU_2,\bU_3)\cdot\cG} \norm{\cL-\cY_{t+1}}. 
\end{cases}
\end{equation}
This creates a self-guiding approach where each iteration's low-rank estimate $\cL_t$ generates an improved augmented tensor $\cY_{t+1}$,
which in turn yields a more accurate low-rank tensor $\cL_{t+1}$ through TPCA.
The cyclical refinement progressively mitigates outlier effects while enhancing recovery of the underlying low-rank structure, with the guidance tensor  converging to the true low-rank tensor over successive iterations.
This approach can be expressed as a joint optimization problem minimizing
\begin{equation}\label{DAobjfunW}
\min_{\cY,\cL = \left(\bU_1, \bU_2, \bU_3 \right) \cdot \cG} \Phi(\cY,\cL; \cW) = \lambda\|\cY-\cX\|_{\cW}^2 +  \|\cL - \cY\|_F^2.
\end{equation}

\subsection{Generalized Tensor Weight for Outlier Detection}\label{section:generalized_weight}

The oracle setting presumes exact knowledge of outlier locations $\cN$, constituting an idealized scenario. 
However, this assumption fails in practical scenarios where outliers must be detected from corrupted observations. 
This introduces significant challenges due to  distortion of the underlying low-rank structure and the interdependence between outlier detection and low-rank approximation.

When outliers correspond to extreme values in $\cX$, such as salt-and-pepper noise in image processing, their locations can be identified by comparing tensor elements to the maximum and minimum values, $x_{\max}$ and $x_{\min}$, respectively. In such cases, $\cW$ is defined as:
\begin{equation}\label{eq:W_impulsive}
    \cW_{ijk}=
\begin{cases}
0, &\text{if } \cX_{ijk}= x_{\max} \text{ or } x_{\min}, \\
1, &\text{otherwise}.
\end{cases}
\end{equation}
While this approach is computationally efficient, it may misclassify genuine extreme values as outliers. Nevertheless, in many applications, the proportion of such extreme values is small.

In more general cases, accurately detecting outliers requires advanced data analysis and signal processing techniques \cite{kim1995adaptive,schulte2006fuzzy}. To enhance robustness, we relax the binary constraint on $\cW$, allowing its elements to take continuous values in the range $[0, 1]$. This relaxation enables a  representation of non-outlier likelihoods. Specifically, the weight tensor $\cW$ is computed based on the discrepancy between the augmented tensor $\cY$ and the observed tensor $\cX$. Larger discrepancies indicate a higher likelihood of outliers, resulting in smaller weights, while smaller discrepancies correspond to normal data points, yielding weights closer to 1.
The weight tensor $\cW$ is  defined element-wise via exponential discrepancy measure:
\[
\cW_{ijk} = \exp\bigl(-\tfrac{1}{2\gamma}(\cY_{ijk} - \cX_{ijk})^2\bigr),
\]
where $\gamma>0$.

An important theoretical property emerges as $\gamma \to 0$, where the continuous weights converge to the ideal binary indicator:
\begin{equation}
\lim_{\gamma\to 0}\cW_{ijk}=
\begin{cases}
0, & \cY_{ijk} \neq \cX_{ijk}, \\
1, & \cY_{ijk} = \cX_{ijk}.
\end{cases}
\end{equation}
This limiting behavior guarantees backward compatibility with the oracle setting.

\subsection{Numerical Algorithm}\label{section:Numerical Algorithm}

The proposed algorithm implements an iterative optimization framework that alternates between updating the weight tensor $\cW$, augmented tensor $\cY$, and low-rank tensor $\cL$. This iterative process creates a mutually dependent system where the weight tensor $\cW$  influences the subsequent updates of $\cY$, while itself being determined by previous iterations.
We remark that in practical applications, all observed data values are inherently bounded. This physical constraint necessarily requires that all iterates in our optimization procedure remain bounded within a prescribed range. 
This is that the augmented tensor $\cY$ should lie in the set:
$$\bB = \{ \cY : \|\cY\|_\infty \leq a \}$$
for some finite positive constant $a$, which is set to $\|\cX\|_\infty$ to match the range of the original observed data. 
Consider the update for the outlier-aware weight $\cW_t$, we modify the iterative scheme in \eqref{updateYL} as follows: 
\begin{equation}\label{eq:algorithm}
\begin{cases}
\cW_{t+1} = \omega(\cY_t),\\
\cY_{t+1}  = \argmin_{\cY\in \bB}\Phi(\cY,\cL_{t}; \cW_{t+1}), \\
\cL_{t+1}  = \argmin_{\cL=(\bU_1,\bU_2,\bU_3)\cdot\cG}\Phi(\cY_{t+1},\cL; \cW_{t+1}). 
\end{cases}
\end{equation}
Here $\Phi$ is given in \eqref{DAobjfunW} and the function $\omega(\cZ)$ is defined by
\begin{equation}\label{UpdateW}
\omega(\cZ)=\exp\bigl(-\tfrac{1}{2\gamma} (\cZ-\cX)^2\bigr),
\end{equation}
where the squaring operation is element-wise.

\subsubsection{Subproblem for $\cY$} 

The optimization subproblem for $\cY_{t+1}$ possesses a separable structure, allowing decomposition into element-wise minimization problems:  
\[
\min_{|\cY_{ijk}|\leq a} \left[ \lambda [\cW_{t+1}]_{ijk}(\cY_{ijk} - \cX_{ijk})^2 + (\cY_{ijk} - [\cL_{t}]_{ijk})^2 \right],
\]  
where the constraint $|\cY_{ijk}| \leq a$ enforces boundedness. This separability yields the closed-form solution:  
\begin{equation}\label{UpdateY}
\cY_{t+1} =\bP(\bigl(\lambda\cW_{t+1} \odot \cX  + {\cL}_{t} \bigr)\oslash (\lambda\cW_{t+1}+\mathbf{1})). 
\end{equation}  
where $\bP(\cdot)$ denotes projection onto the bounded set $\bB$, truncating values outside $\bB$ to the nearest boundary, $\odot$ and $\oslash$ represent Hadamard product and element-wise division respectively, and $\mathbf{1}$ is an all-ones tensor. 

\subsubsection{Subproblem for $\cL$}

The subproblem for the low-rank tensor $\cL_{t+1}$ can be reformulated into a TPCA problem:
\[
\min_{\cL = \left(\bU_1,\bU_2,\bU_3\right)\cdot\cG} \|\cL - \cY_{t+1}\|_F^2,
\]
where $\left(\bU_1,\bU_2,\bU_3\right)\cdot\cG$ is the Tucker decomposition form of $\cL$. We apply the Block Coordinate Descent (BCD) method \cite{razaviyayn2013unified,xu2013block} to find $\cL_{t+1}$. 
Unlike traditional TPCA, which requires multiple iterations to converge, our approach treats the factors and core tensor as intermediate variables within an alternating optimization framework. Instead of fully solving the TPCA subproblem at each outer iteration, we perform a single PBCD update step per block $(\bU_{1}, \bU_{2}, \bU_{3}, \cG)$,  improving computational efficiency. 
The updates for $(\bU_{1,t+1},\bU_{2,t+1},\bU_{3,t+1},\cG_{t+1})$ are given by
\begin{align}
	\bU_{1,t+1}&=\argmin_{\bU_1}\norm{\left(\bU_1,\bU_{2t},\bU_{3t} \right)\cdot \cG_{t}-\cY_{t+1}}_{F}^{2}  \nonumber \\ 
	&\qquad \qquad +\alpha_{1} \left\| \bU_1-\bU_{1t}\right\|^2_{F},\label{Subproblem U1}\\
\bU_{2,t+1}&=\argmin_{\bU_2}\norm{\left(\bU_{1,t+1},\bU_{2},\bU_{3t} \right)\cdot \cG_{t}-\cY_{t+1}}_{F}^{2}    \nonumber \\ 
	&\qquad \qquad +\alpha_{1} \left\| \bU_2-\bU_{2t}\right\|^2_{F},\label{Subproblem U2}\\
\bU_{3,t+1}&=\argmin_{\bU_3}\norm{\left(\bU_{1,t+1},\bU_{2,t+1},\bU_{3} \right)\cdot \cG_{t}-\cY_{t+1}}_{F}^{2}    \nonumber \\ 
	&\qquad \qquad +\alpha_{1} \left\| \bU_3-\bU_{3t}\right\|^2_{F},\label{Subproblem U3}\\
\cG_{t+1}&=\argmin_{\cG}\norm{\left(\bU_{1,t+1},\bU_{2,t+1},\bU_{3,t+1} \right)\cdot \cG-\cY_{t+1}}_{F}^{2}    \nonumber \\ 
	& \qquad \qquad +\alpha_{2}^3
   \left\| \cG-\cG_{t}\right\|^2_{F}, \label{Subproblem G}
\end{align}
where the second term in each equation is the proximal term and $\alpha>0$ is the regularization parameter. 

Now, we consider the solution for the factor matrices in \eqref{Subproblem U1}--\eqref{Subproblem U3}. 
Using matricization and Property \ref{property:tucker_decomp}, the subproblems for $\bU_{i}(i=1,2,3)$ can be rewritten as: 
\begin{equation*}
    \bU_{i,t+1}=\argmin_{\bU_i}\left\|\bU_i\widehat{\bU}_{it}^{\mathrm{T}}-\Matricize{i}{\cY_{t+1}}\right\|^2_{F}+\alpha_{1}\left\|\bU_{i}-\bU_{it}\right\|_{F}^2,
\end{equation*}
where $\widehat{\bU}_{1t} \coloneqq  \big(\bU_{3t} \otimes \bU_{2t} \big) \Matricize{1}{\cG_{t}}^{\mathrm{T}} $, $\widehat{\bU}_{2t}\coloneqq  \big(\bU_{3t} \otimes \bU_{1,t+1} \big) \Matricize{2}{\cG_{t}}^{\mathrm{T}} $ and $\widehat{\bU}_{3t}\coloneqq  \big(\bU_{2,t+1} \otimes \bU_{1,t+1} \big) \Matricize{3}{\cG_{t}}^{\mathrm{T}} $.
Consequently, we can obtain:
\begin{equation}\label{UpdateU}
    \bU_{i,t+1}=\bigl( \Matricize{i}{\cY_{t+1}}\widehat{\bU}_{it} + \alpha_{1} \bU_{it} \bigr)\bigl(\widehat{\bU}_{it}^{\mathrm{T}}\widehat{\bU}_{it}+\alpha_{1}\bI\bigr)^{-1}.
\end{equation}
We remark that  the factor matrices can be regarded as being updated by the scaled gradient descent \cite{dong2023fast}. 

Next, we consider the solution for core tensor in \eqref{Subproblem G}. 
Using Property~\ref{property:tucker_decomp}, it can be equivalently rewritten as:
\[
\cG_{t+1}=\argmin_{\cG}\inner{(\bV_{1,t+1}, \bV_{2,t+1}, \bV_{3,t+1})\cdot\cG}{\cG}-\inner{\cB_{t+1}}{\cG}.
\]
Here $\bV_{i,t+1}=\bU_{i,t+1}^T\bU_{i,t+1}+\alpha_{2} \bI$ and  $\cB_{t+1}=\alpha_{2}^3\cG_t+(\bU_{1,t+1}^T, \bU_{2,t+1}^T, \bU_{3,t+1}^T)\cdot\cY_{t+1}$. 
Hence, the closed-form solution is given by
\begin{equation}\label{UpdateG}
\cG_{t+1}=(\bV_{1,t+1}^{-1}, \bV_{2,t+1}^{-1}, \bV_{3,t+1}^{-1})\cdot\cB_{t+1}.
\end{equation}

Now, we summarize the resulting algorithm for Self-guided Data Augmented Outlier-aware (SDAO) TRPCA in Algorithm \ref{alg:SDAO-tensor robust PCA}. 
We remark that in the oracle case and for impulsive noise, updating the weight $\cW_{t + 1}$ according to the algorithm is unnecessary. 
Instead, the weight $\cW_{t + 1}$ is substituted with the predefined weight specified in \eqref{eq:W} or \eqref{eq:W_impulsive}, respectively.

\begin{algorithm}[htbp]
	\renewcommand{\algorithmicrequire}{ \textbf{Input:}}
	\renewcommand{\algorithmicensure}{ \textbf{Output:}}
	\caption{Self-guided Data Augmented Outlier-aware (SDAO) TRPCA}
	\label{alg:SDAO-tensor robust PCA}
	\begin{algorithmic}[1]
		\REQUIRE The observed tensor $\cX\in \mathbb{R}^{n_{1}\times n_{2}\times n_{3}}$, Tucker rank $\br$, maximum
		iteration $T$, and parameter $\gamma$.
		\STATE \textbf{Initialize: }$\cY_0 = \operatorname{Tucker}(\cX, \br)=\left( \bU_{1,0}, \bU_{2,0}, \bU_{3,0} \right) \cdot \cG_{0}$, where $\cY_0$ is the rank-$\br$ Tucker decomposition of $\cX$; set $\alpha_{1}=\alpha_{2} = 10^{-10}$, $\lambda = 1$, and $\epsilon = 10^{-8}$.
		\FOR{$t=0,1,\cdots,T$}
        \STATE $\cW_{t+1} = \omega(\cY_t)$;
        \STATE $\cY_{t+1} =
        \bP\left(\lambda\cW_{t+1} \odot \cX  + {\cL}_{t} \right)\oslash (\lambda\cW_{t+1}+\mathbf{1})$;
		\STATE $\bU_{i,t+1}=\bigl( \Matricize{i}{\cY_{t+1}}\widehat{\bU}_{it} + \alpha_{1} \bU_{it} \bigr)\bigl(\widehat{\bU}_{it}^{\mathrm{T}}\widehat{\bU}_{it}+\alpha_{1}\bI\bigr)^{-1}$ for $i=1,2,3$;
		\STATE $\cG_{t+1}=(\bV_{1,t+1}^{-1}, \bV_{2,t+1}^{-1}, \bV_{3,t+1}^{-1})\cdot\cB_{t+1}$; 
        \STATE $\cL_{t+1}=\left( \bU_{1,t+1}, \bU_{2,t+1}, \bU_{3,t+1} \right)\cdot\cG_{t+1}$;
		\STATE Stop if the convergence conditions are met:
		$$\begin{aligned}
		    &\left\|\cL_{t+1}-\cL_{t}\right\|_{\infty} \leq \epsilon,\quad\left\|\cY_{t+1}-\cY_{t}\right\|_{\infty} \leq \epsilon, \\
            & \left\|\cL_{t+1}-\cY_{t+1}\right\|_{\infty} \leq \epsilon.
		\end{aligned}
		$$
		\ENDFOR
		\ENSURE $\cL=\cL_{t+1}$ and  $\cS=\cX-\cL$.
	\end{algorithmic}
\end{algorithm}

\section{Convergence Analysis}\label{section:convergence}

The iterative scheme in \eqref{eq:algorithm} presents analytical challenges due to its alternating update structure. 
Unlike conventional optimization methods derived from a single explicit objective function, our algorithm combines three distinct yet interdependent update mechanisms. 
The weight tensor $\cW_{t+1}$ is computed through the nonlinear transformation $\omega(\cY_t)$ defined in \eqref{UpdateW}. This weight update then informs the subsequent computation of $\cY_{t+1}$ through a weighted optimization problem that incorporates both the updated weights and the previous low-rank estimate $\cL_t$. 
Meanwhile, the Tucker factors $\bU_i$ and core tensor $\cG$ follow decomposition procedures that remain independent of the weight tensor $\cW$.

To address the challenges posed by this coupling, we develop a convergence framework that combines block coordinate descent with majorization-minimization principles. 
We introduce a surrogate objective function $\Psi(\cY,\bU_1,\bU_2,\bU_3,\cG)$ with the properties: (1) the weight sequence $\{\cW_t\}$ is derived from the optimality conditions of $\Psi$'s majorizing function, and (2) the variable sequence $\{\cY_t,\bU_{1t},\bU_{2t},\bU_{3t},\cG_t\}$ is generated through block-wise minimization of $\Psi$. 

Since $\cL = (\bU_1, \bU_2, \bU_3)\cdot\cG$, we simplify $\|(\bU_1, \bU_2, \bU_3)\cdot\cG - \cY\|_{F}^2$ as $\|\cL - \cY\|_{F}^2$ and the sequence $\{\cY_t,\bU_{1t},\bU_{2t},\bU_{3t},\cG_t\}$  as the sequence $\{\cY_t,\cL_t\}$  when no ambiguity arises. 
Our analysis begins by introducing the Welsch's function \cite{holland1977robust}, a smooth non-convex objective function, which serves as our robust loss function:
\begin{equation}
\psi(\cY) = 2\gamma\sum_{ijk}\bigl(1-\exp\bigl(-\tfrac{(\cY_{ijk}-\cX_{ijk})^2}{2\gamma}\bigr)\bigr).
\end{equation}
We consider the surrogate function of $\psi(\cY)$. 
This function has a quadratic surrogate:
\begin{equation}
\widehat{\psi}(\cY; \cZ) = \psi(\cZ) + \|\cY-\cX\|_{\omega(\cZ)}^2-\|\cZ-\cX\|_{\omega(\cZ)}^2,
\end{equation}
where $\omega(\cZ_{ijk}) = \exp\bigl(-\tfrac{1}{2\gamma}(\cZ_{ijk}-\cX_{ijk})^2\bigr)$ defines the weights. 
A surrogate function is a locally tight upper bound of the original objective function that facilitates optimization by decomposing the problem into simpler subproblems. 
It is easy to check that  $\widehat{\psi}(\cY; \cZ)$ at a reference point $\cZ$ satisfies:  
\begin{enumerate}
\item upper bound property: $\psi(\cY) \leq \widehat{\psi}(\cY; \cZ)$ for all $\cY,\cZ$.
\item local tightness: $\psi(\cZ) = \widehat{\psi}(\cZ; \cZ)$.
\item first-order consistency: $\nabla \psi(\cZ) = \nabla \widehat{\psi}(\cZ; \cZ)$.
\end{enumerate}

The complete objective function and its surrogate are:
\begin{align}
\Psi(\cY,\cL) &= \|\cL - \cY\|_{F}^2 + \lambda \psi(\cY), \label{objfunPsi} \\
\widehat{\Psi}(\cY,\cL; \cZ) &= \|\cL - \cY\|_{F}^2 + \lambda \widehat{\psi}(\cY; \cZ).\label{objfunhatPsi}
\end{align}
One can readily verify that $\widehat{\Psi}(\cY,\cL; \cZ)$ serves as a surrogate function for $\Psi(\cY,\cL)$ with respect to the variable $\cY$.

\begin{lemma}
Let  $\Psi(\cY,\cL)$ and $\widehat\Psi(\cY,\cL; \cZ)$ be defined in  \eqref{objfunPsi} and  \eqref{objfunhatPsi} respectively, and $\{\cY_t\}$ be the sequences generated by Algorithm \ref{alg:SDAO-tensor robust PCA}.
Then we have
\begin{equation}
\cY_{t+1} = \argmin_{\cY\in \bB} \widehat{\Psi}(\cY,\cL_t;\cY_t),
\end{equation}
and the objective satisfies:
\begin{equation}\label{decreaseY}
\Psi(\cY_{t+1},\cL_{t})\leq 
\Psi(\cY_{t},\cL_{t}).
\end{equation}
\end{lemma}
 
\begin{proof}
The first equality follows directly from the algorithmic construction. To establish the descent property, we observe that by the minimization property of $\cY_{t+1}$:  
\[
\widehat{\Psi}(\cY_{t+1},\cL_t;\cY_t) \leq \widehat{\Psi}(\cY_t,\cL_t;\cY_t) = \Psi(\cY_t,\cL_t),
\]  
where the equality holds because the surrogate function $\widehat{\Psi}$ matches the original objective $\Psi$ at $\cY_t$. Furthermore, the surrogate function $\widehat{\Psi}$ majorizes the original objective $\Psi$ for any $\cY$, which gives:  
\[
\Psi(\cY_{t+1},\cL_t) \leq \widehat{\Psi}(\cY_{t+1},\cL_t;\cY_t).
\]  
Hence the lemma holds. 
\end{proof}

For the factor matrix updates and the core tensor update, we have the following lemma. 
\begin{lemma}
Let $\{\bU_{1t}, \bU_{2t}, \bU_{3t},\cG_{t}\}$ be  the sequences generated by \eqref{Subproblem U1}--\eqref{Subproblem G}, and $\cL_{t}=(\bU_{1t}, \bU_{2t}, \bU_{3t})\cdot\cG_{t}$,  then we have
\begin{align*}
    &\norm{\cL_{t+1}-\cY_{t+1}}_{F}^{2} +\alpha_1\sum_{i=1}^3\norm{\bU_{i,t+1}-\bU_{it}}_F^2\\
    &\quad+\alpha_2\|\cG_{t+1}-\cG_t\|_{F}^2 \leq \norm{\cL_{t}-\cY_{t+1}}_{F}^{2}.
\end{align*}
Moreover, we have 
\begin{align*}
    &\Psi(\cY_{t+1},\cL_{t+1})+\alpha_1\sum_{i=1}^3\norm{\bU_{i,t+1}-\bU_{it}}_F^2\\
    &\quad+\alpha_2\|\cG_{t+1}-\cG_t\|_{F}^2 
\leq \Psi(\cY_{t+1},\cL_{t}).
\end{align*}
\end{lemma}
\begin{proof}
The proof proceeds by analyzing each block update sequentially. 
From \eqref{Subproblem U1}, the optimality of $\bU_{1,t+1}$ yields:
\begin{align*}
\norm{(\bU_{1,t+1},\bU_{2t},\bU_{3t})\cdot\cG_{t}-\cY_{t+1}}_{F}^{2} + \alpha_{1}\norm{\bU_{1,t+1}-\bU_{1t}}^2_{F} & \\
\leq \norm{(\bU_{1t},\bU_{2t},\bU_{3t})\cdot\cG_{t}-\cY_{t+1}}_{F}^{2}.&
\end{align*}
Similarly, \eqref{Subproblem U2} gives:
\begin{align*}
\norm{(\bU_{1,t+1},\bU_{2,t+1},\bU_{3t})\cdot\cG_{t}-\cY_{t+1}}_{F}^{2} + \alpha_{1}\norm{\bU_{2,t+1}-\bU_{2t}}^2_{F} & \\
\leq \norm{(\bU_{1,t+1},\bU_{2t},\bU_{3t})\cdot\cG_{t}-\cY_{t+1}}_{F}^{2}.&
\end{align*}
From \eqref{Subproblem U3}, we have
\begin{align*}
&\norm{(\bU_{1,t+1},\bU_{2,t+1},\bU_{3,t+1})\cdot\cG_{t}-\cY_{t+1}}_{F}^{2} + \alpha_{1}\norm{\bU_{3,t+1}-\bU_{3t}}^2_{F} \\
&\qquad \qquad\leq \norm{(\bU_{1,t+1},\bU_{2,t+1},\bU_{3t})\cdot\cG_{t}-\cY_{t+1}}_{F}^{2}.
\end{align*}
Finally, \eqref{Subproblem G} provides:
\begin{align*}
&\norm{\cL_{t+1}-\cY_{t+1}}_{F}^{2} + \alpha_{2}\norm{\cG_{t+1}-\cG_{t}}^2_{F}  \\
\leq &\norm{(\bU_{1,t+1},\bU_{2,t+1},\bU_{3,t+1})\cdot\cG_{t}-\cY_{t+1}}_{F}^{2}.
\end{align*}
Summing these inequalities, the first result holds. 
The second inequality follows by  adding $\lambda\psi(\cY_{t+1})$ to both sides.
\end{proof}

The following lemma states that the sequence $\{(\cY_t,\cL_t)\}$ is non-increasing with respect to $\Psi(\cY,\cL)$. 
\begin{lemma}
Let $\{(\cY_t, \cL_{t})\}$ be the sequences generated by Algorithm \ref{alg:SDAO-tensor robust PCA}.
Then we have 
\begin{align*}
    &\Psi(\cY_{t+1},\cL_{t+1})+\alpha_1\sum_{i=1}^3\norm{\bU_{i,t+1}-\bU_{it}}_F^2\\
    &\quad+\alpha_2\|\cG_{t+1}-\cG_t\|_{F}^2 
\leq \Psi(\cY_{t},\cL_{t}).
\end{align*}
Moreover, we have $\lim_{t\rightarrow \infty}\|\bU_{i,t+1}-\bU_{it}\|_F^2=0$ for $i=1,2,3$ and $\lim_{t\rightarrow \infty} \norm{\cG_{t+1}-\cG_{t}}_F^2=0$. Consequently, $\lim_{t\rightarrow \infty} \norm{\cL_{t+1}-\cL_{t}}_F^2=0$. 
\end{lemma}

The iterative process maintains $\cY_t \in \bB$ for all iterations $t$, where $\bB = \{\cY : \|\cY\|_\infty \leq a\}$ represents the prescribed bounded domain. 
The boundedness of $\{\cY_t\}$ directly implies the boundedness of the corresponding low-rank estimates $\{\cL_t\}$.
Hence we have the following convergence result. 
\begin{theorem}
Assuming the set $\bOmega$ is convex, let $\bx_{t+1}:=(\cY_{t+1},\cL_{t+1})\in \bOmega$ denote the variables in the $(t+1)$-th iteration of Algorithm \ref{alg:SDAO-tensor robust PCA}. Then every limit point of $\{\bx_{t+1}\}$ is a stationary point of $\Psi(\cY,\cL)$, i.e.,
\begin{equation}
\lim _{t \rightarrow \infty} d\left(\bx_{t+1}, \bOmega^{*}\right)=\lim _{t \rightarrow \infty}\inf_{\bx^* \in \bOmega^{*}}\|\bx_{t+1}-\bx^*\|=0,
\end{equation}
where $\bOmega^{*}$ is the set of stationary points of $\Psi(\cY,\cL)$.
\end{theorem}

The detailed proof is provided in the supplementary material.

\section{Experimental Results}\label{section:experiment}

In this section, we apply the proposed method to both simulations and real-world experiments. We first evaluate tensor recovery performance under varying Tucker ranks and sparse noise levels, followed by a study of regularization parameters. Next, we test the algorithm on face denoising, hyperspectral image denoising, and background subtraction. In all experiments, we set $\alpha_1 =\alpha_2= 10^{-10}$, $\epsilon=10^{-8}$ and $\lambda = 1$. The parameter $\gamma$ is selected according to the formula:
$$\gamma=\frac{\gamma_0}{n_{1}n_{2}n_{3}}\sum_{i,j,k}\left[\cY_{0}-\cX\right]_{ijk}^2,$$ 
where $\gamma_0$ is a pre-defined parameter. Specifically, we set $\gamma_0 = 0.5$ for background subtraction and $\gamma_0 = 0.05$ for the other experiments. The Tucker rank values differ across experiments, as specified in each subsection. The best and second-best numerical results are highlighted in bold and underlined, respectively. All experiments are performed on a PC with an Intel i5-10300H CPU and 16 GB of RAM.

\subsection{Simulation Experiments}\label{Simulation}

\subsubsection{Exact Recovery Under Varying Conditions}\label{experiment1}

We first consider exact recovery of low-rank ($\cL$) and sparse ($\cS$) components on synthetic tensors of size $n \times n \times n$ ($n \in \{100, 200\}$). 
The true tensor $\cL_*$ is generated by $\cL_{*}=\left(\bU_{1},\bU_{2},\bU_{3} \right)\cdot\cG$ with the Tucker rank  $[r, r, r]$ ($r \in \{10, 20, 30\}$),  where $\cG\in \mathbb{R}^{r\times r\times r}$ and $\bU_{i}\in\mathbb{R}^{r\times n},i=1,2,3$ with entries independently sampled from a standard normal distribution. 
The elements of $\cL_{*}$ are then scaled to the range $[-1,1]$.
The true sparse tensor $\cS_*$ is generated by a Bernoulli $\pm 1$ distribution as follows
\begin{equation}\label{Bernoulli}
[\cS_{*}]_{ijk}=
\begin{cases}
1, & \text{w.p. }\frac{\rho_s}{2},\\
0, &\text{w.p. }1-\rho_s,\\
-1, & \text{w.p. }\frac{\rho_s}{2}.
\end{cases}
\end{equation}
with sparsity ratios $\rho_s \in \{0.1, 0.3, 0.5\}$.  
The results are compared with those obtained by RTCUR-FF \cite{cai2024robust}, a CUR-based low-rank approximation method operating on fiber/slice subsets. 
The relative error is used to measure the quality of the recovery results, which is defined as
$$
\text{Rel}_{\widehat{\cL}}=\frac{\|\widehat{\cL}-\cL_{*}\|_{F}}{\|\cL_{*}\|_{F}}, \text{Rel}_{\widehat{\cS}}=\frac{\|\widehat{\cS}-\cS_{*}\|_{F}}{\|\cS_{*}\|_{F}}, 
$$
where $\widehat{\cL}$ and $\widehat{\cS}$ represent the recovered tensors. 
Three key metrics are reported: the relative errors of the recovered low-rank tensor $\widehat{\cL}$ and outlier tensor $\widehat{\cS}$, as well as the computational time in seconds. The comparisons of relative error between our proposed method and RTCUR-FF are shown in Table \ref{correct recovery}. 
Our method demonstrates superior recovery accuracy across all test cases, achieving remarkably low relative errors of less than  $3.0\times 10^{-8}$ for $\cL$ and $4.0\times 10^{-9}$  for $\cS$.
In contrast, RTCUR-FF exhibits higher errors, exceeding $1.0\times 10^{-5}$ for $\cL$ and $1.0\times 10^{-6}$  for $\cS$ when $n=100, \rho=0.5$. 
Moreover, RTCUR-FF fails completely when $n=100, r=30, \rho_s = 0.5$, yielding unreliable results. 
Our proposed method is at least 20 times faster than the RTCUR-FF method according to the CPU running time. This improvement is mainly attributed to the fact that, in RTCUR-FF, a higher rank results in more sampling points, which substantially increases the computational cost.
In summary, the proposed method outperforms RTCUR-FF both in accuracy and computational efficiency, making it a robust and practical method for tensor recovery tasks.

\renewcommand\arraystretch{1.15}
\begin{table}[htbp]
	\centering\footnotesize\tabcolsep=1.mm
\caption{Comparison of recovery results: Relative errors and computational time for RTCUR-FF and our method.}
\label{correct recovery}
	\begin{tabular}{cccccccc}
		\hline
		& \multicolumn{1}{c|}{} & \multicolumn{3}{c||}{RTCUR-FF} & \multicolumn{3}{c}{Ours} \\ \hline
		\multicolumn{1}{c|}{$r$} & \multicolumn{1}{c|}{$\rho$} & Rel$_{\widehat{\cL}}$ & Rel$_{\widehat{\cS}}$ & \multicolumn{1}{c||}{Time(s)} & Rel$_{\widehat{\cL}}$ & Rel$_{\widehat{\cS}}$ & Time(s) \\ \hline
		\multicolumn{8}{c}{$n=100$} \\ \hline
		\multicolumn{1}{c|}{\multirow{3}{*}{10}} & \multicolumn{1}{c|}{0.1} & 6.81e-09 & 8.78e-10 & \multicolumn{1}{c||}{24.29} & \textbf{8.30e-09} & \textbf{1.10e-09} & \textbf{1.17} \\ \cline{2-8} 
		\multicolumn{1}{c|}{} & \multicolumn{1}{c|}{0.3} & 2.20e-08 & 2.43e-09 & \multicolumn{1}{c||}{38.97} & \textbf{1.37e-08} & \textbf{1.59e-09} & \textbf{1.71} \\ \cline{2-8} 
		\multicolumn{1}{c|}{} & \multicolumn{1}{c|}{0.5} & 1.67e-05 & 2.51e-06 & \multicolumn{1}{c||}{76.24} & \textbf{1.94e-08} & \textbf{3.08e-09} & \textbf{2.57} \\ \cline{1-8} 
		\multicolumn{1}{c|}{\multirow{3}{*}{20}} & \multicolumn{1}{c|}{0.1} & 3.07e-09 & 4.29e-10 & \multicolumn{1}{c||}{41.84} & \textbf{4.90e-09} & \textbf{7.75e-10} & \textbf{1.31} \\ \cline{2-8} 
		\multicolumn{1}{c|}{} & \multicolumn{1}{c|}{0.3} & 2.22e-08 & 3.51e-09 & \multicolumn{1}{c||}{69.09} & \textbf{7.49e-09} & \textbf{1.28e-09} & \textbf{2.25} \\ \cline{2-8} 
		\multicolumn{1}{c|}{} & \multicolumn{1}{c|}{0.5} & 1.43e-04 & 2.38e-05 & \multicolumn{1}{c||}{128.98} & \textbf{2.07e-08} & \textbf{3.50e-09} & \textbf{3.56} \\ \cline{1-8} 
		\multicolumn{1}{c|}{\multirow{3}{*}{30}} & \multicolumn{1}{c|}{0.1} & 5.82e-09 & 1.16e-09 & \multicolumn{1}{c||}{59.60} & \textbf{3.92e-09} & \textbf{9.04e-10} & \textbf{2.02} \\ \cline{2-8} 
		\multicolumn{1}{c|}{} & \multicolumn{1}{c|}{0.3} & 1.60e-08 & 2.45e-09 & \multicolumn{1}{c||}{105.33} & \textbf{1.10e-08} & \textbf{1.84e-09} & \textbf{2.98} \\ \cline{2-8} 
		\multicolumn{1}{c|}{} & \multicolumn{1}{c|}{0.5} & 1.01e+00 & 2.11e-01 & \multicolumn{1}{c||}{149.72} & \textbf{1.66e-08} & \textbf{3.69e-09} & \textbf{5.16} \\ \hline
		\multicolumn{8}{c}{$n=200$} \\ \hline
		\multicolumn{1}{c|}{\multirow{3}{*}{10}} & \multicolumn{1}{c|}{0.1} & 5.55e-09 & 5.14e-10 & \multicolumn{1}{c||}{134.58} & \textbf{5.36e-09} & \textbf{5.39e-10} & \textbf{6.62} \\ \cline{2-8} 
		\multicolumn{1}{c|}{} & \multicolumn{1}{c|}{0.3} & 2.66e-08 & 2.80e-09 & \multicolumn{1}{c||}{285.37} & \textbf{1.55e-08} & \textbf{1.65e-09} & \textbf{9.31} \\ \cline{2-8} 
		\multicolumn{1}{c|}{} & \multicolumn{1}{c|}{0.5} & 2.20e-03 & 2.99e-04 & \multicolumn{1}{c||}{420.92} & \textbf{2.05e-08} & \textbf{2.94e-09} & \textbf{13.81} \\ \cline{1-8} 
		\multicolumn{1}{c|}{\multirow{3}{*}{20}} & \multicolumn{1}{c|}{0.1} & 3.08e-09 & 3.14e-10 & \multicolumn{1}{c||}{303.96} & \textbf{1.22e-08} & \textbf{1.29e-09} & \textbf{7.27} \\ \cline{2-8} 
		\multicolumn{1}{c|}{} & \multicolumn{1}{c|}{0.3} & 2.11e-08 & 4.29e-09 & \multicolumn{1}{c||}{502.04} & \textbf{2.21e-08} & \textbf{2.50e-09} & \textbf{10.61} \\ \cline{2-8} 
		\multicolumn{1}{c|}{} & \multicolumn{1}{c|}{0.5} & 5.16e-08 & 6.09e-09 & \multicolumn{1}{c||}{810.82} & \textbf{1.81e-08} & \textbf{3.08e-09} & \textbf{16.12} \\ \cline{1-8} 
		\multicolumn{1}{c|}{\multirow{3}{*}{30}} & \multicolumn{1}{c|}{0.1} & 2.08e-09 & 2.81e-10 & \multicolumn{1}{c||}{449.91} & \textbf{6.30e-09} & \textbf{9.15e-10} & \textbf{8.75} \\ \cline{2-8} 
		\multicolumn{1}{c|}{} & \multicolumn{1}{c|}{0.3} & 1.88e-08 & 2.37e-09 & \multicolumn{1}{c||}{721.28} & \textbf{1.19e-08} & \textbf{1.59e-09} & \textbf{12.15} \\ \cline{2-8} 
		\multicolumn{1}{c|}{} & \multicolumn{1}{c|}{0.5} & 5.55e-08 & 8.56e-09 & \multicolumn{1}{c||}{992.31} & \textbf{1.83e-08} & \textbf{3.26e-09} & \textbf{18.68} \\ \hline
	\end{tabular}
\end{table}

\subsubsection{Phase Transition}\label{experiment2}

In this part, we further evaluate the recovery results with varying Tucker ranks of $\cL_{*}$ and different levels of sparsity in  $\cS_{*}$. For simplicity, we consider tensors of size $50\times50\times50$. First, we generate a low-rank tensor $\cL_{*}$ with Tucker rank $[r, r, r]$, using the same procedure described in Section \ref{experiment1}. For the sparse component, we consider two cases: random signs and coherent signs. In the case of random signs, we generate a sparse tensor that satisfies the Bernoulli $\pm1$ distribution as shown in \eqref{Bernoulli}. In the case of coherent signs, we generate a sparse tensor defined as
\begin{equation*}
[\cS_{*}]_{ijk}=
\begin{cases}
\operatorname{sign}\left([\cL_{*}]_{ijk}\right), & \text{w.p. }\rho_s,\\
0, &\text{w.p. }1-\rho_s,
\end{cases}
\end{equation*}
where $\operatorname{sign}(\cdot)$ is the sign function. To investigate the phase transition in Tucker rank and sparsity, we vary $r \in [2:1:50]$ and $\rho_s \in [0.01:0.03:1]$, conducting 10 experiments for each pair of $(r, \rho_s)$. An experiment is considered successful if the recovered tensor $\widehat{\cL}$ satisfies $\frac{\|\widehat{\cL}-\cL_{*}\|_{F}}{\|\cL_{*}\|_{F}}\le 10^{-3}$. 
Fig. \ref{Phase transition} shows the success rate for each pair of $(r,\rho_s)$ using our proposed method and RTCUR-FF \cite{cai2024robust} (with white indicating 100$\%$ success and black indicating 0$\%$ success). As shown, our method successfully recovers $\widehat{\cL}$ even for highly ill-conditioned problems, while RTCUR-FF struggles to achieve correct recovery when $\rho_s>0.4$. This further demonstrates the superiority of our model.

\begin{figure}[htbp]
\renewcommand{\arraystretch}{0.5}
\setlength\tabcolsep{0.3pt}
	\centering
	\begin{tabular}{cc}
		\includegraphics[width=0.5\linewidth]{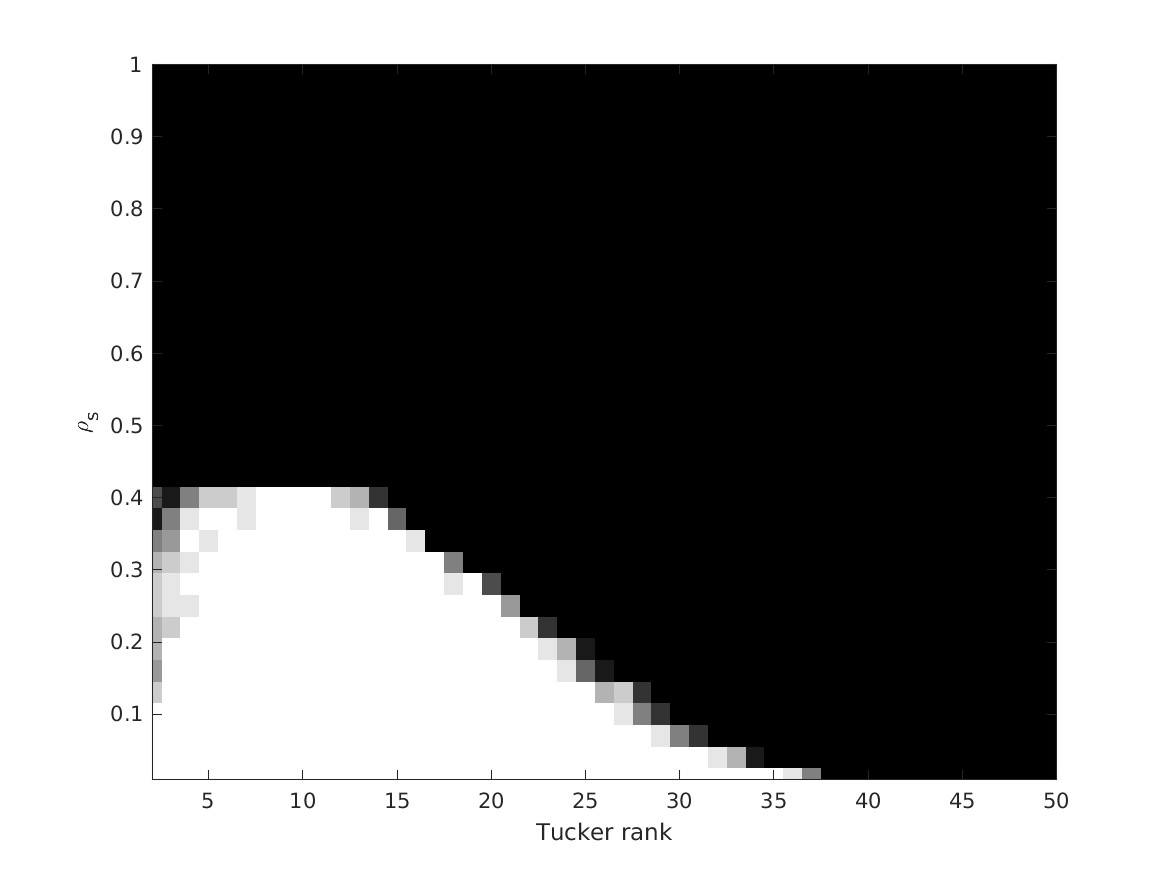} & 
		\includegraphics[width=0.5\linewidth]{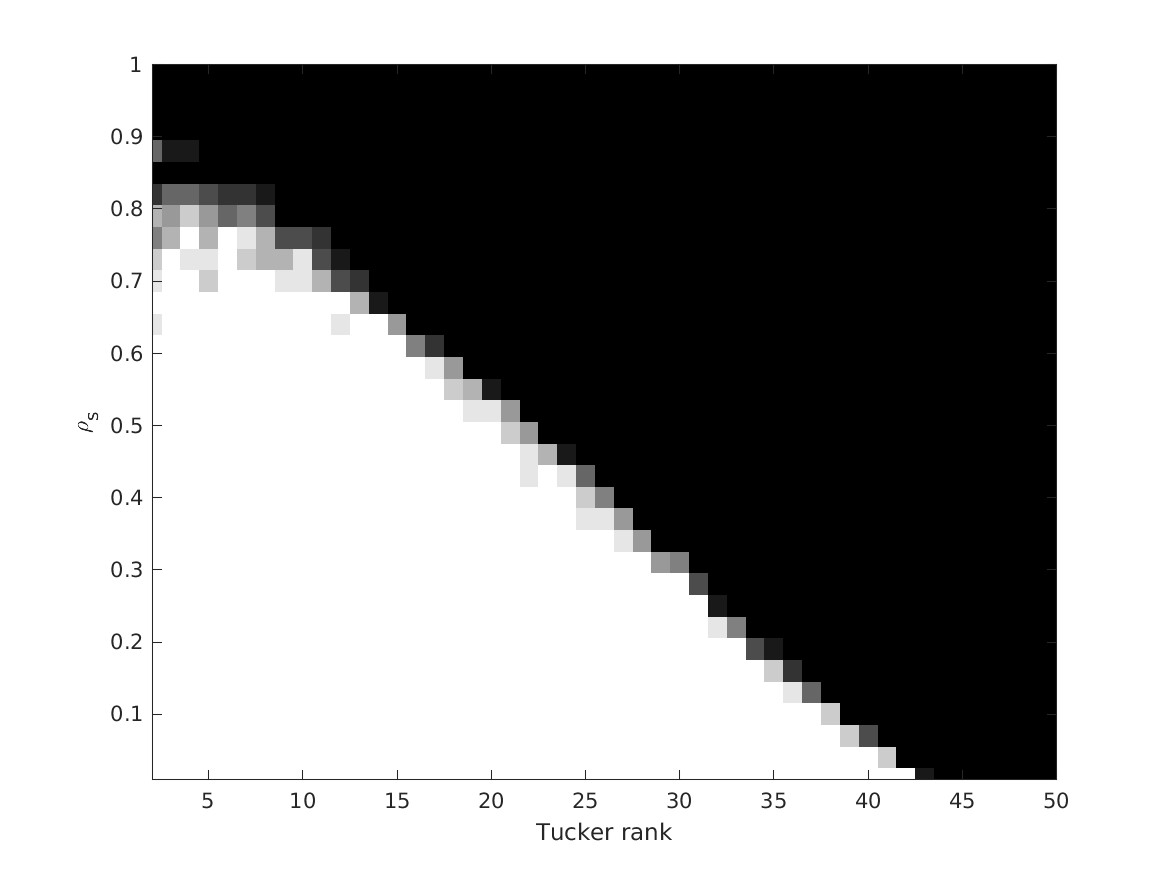}   \\
		\footnotesize (a) RTCUR-FF, random signs & \footnotesize (b) Ours, random signs \\
\includegraphics[width=0.5\linewidth]{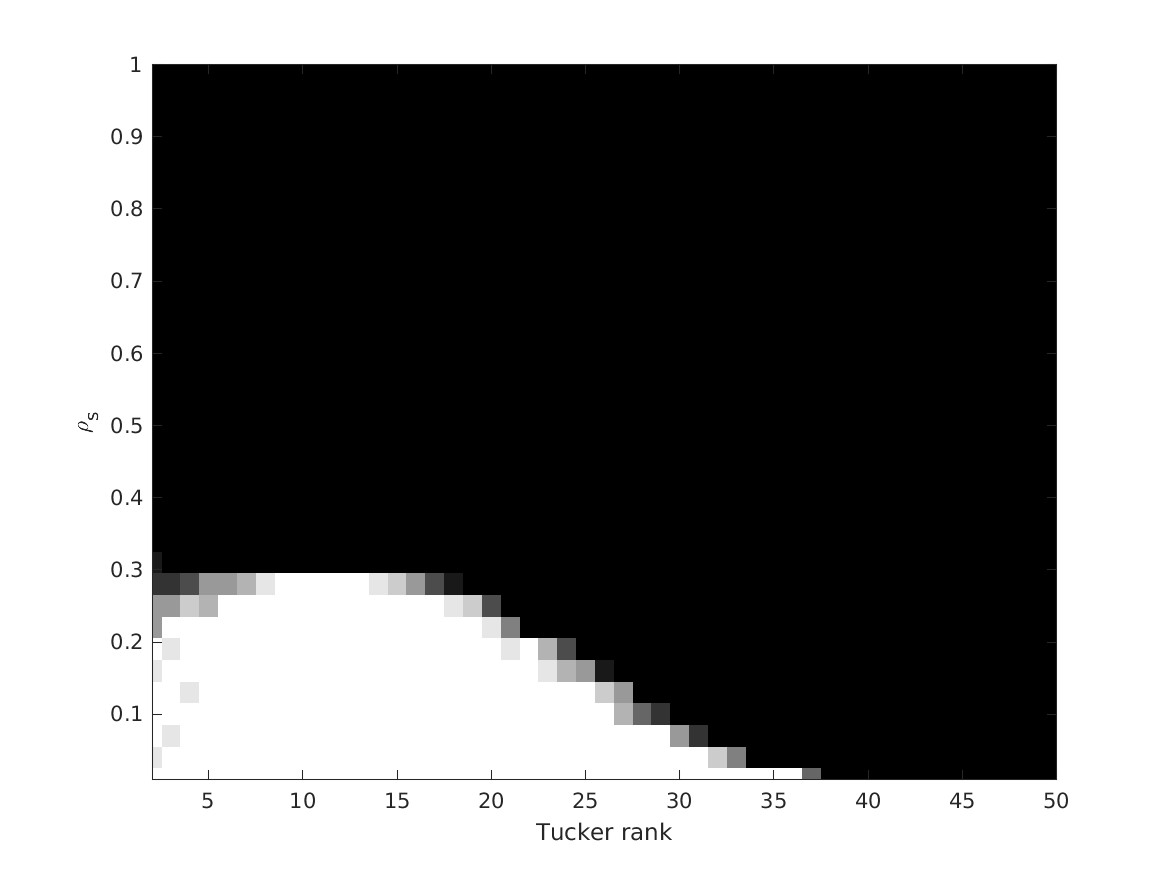} & \includegraphics[width=0.5\linewidth]{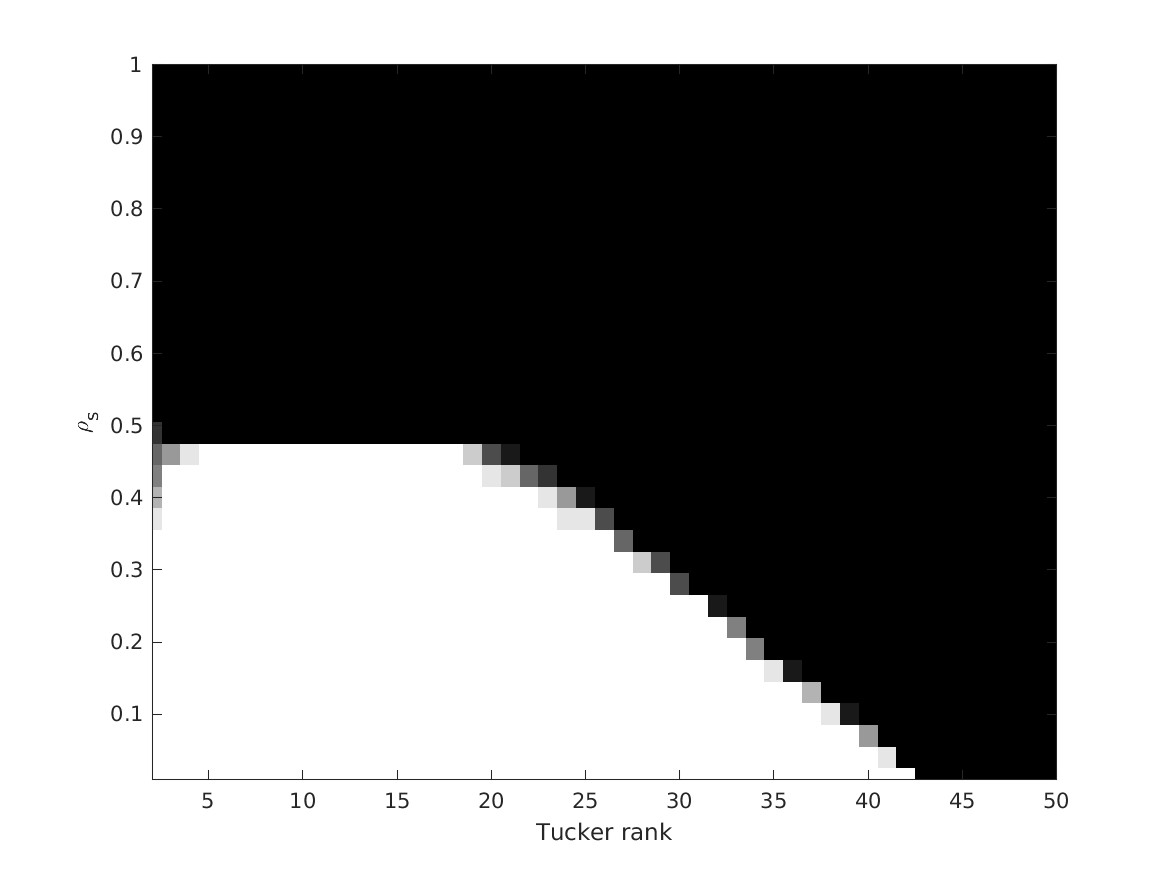} \\
\footnotesize (c) RTCUR-FF, coherent signs & \footnotesize (d) Ours, coherent signs
	\end{tabular}
	\caption{Correct recovery for different levels of Tucker rank and sparsity. Fraction of correct recoveries across 10 trials, as a function of Tucker rank ($x$-axis) and sparsity of $\cS_0$ ($y$-axis).}
	\label{Phase transition}
\end{figure}

\subsubsection{Sensitivity Analysis with Respect to $\lambda$}

To investigate the impact of the model parameter $\lambda$, we conduct a set of experiments on a synthetic tensor of size $100\times 100\times 100$, with the Tucker rank fixed at $[10, 10, 10]$. The noise scenario follows the random signs setting described in \eqref{Bernoulli}, with $\rho_s\in\{0.3, 0.6\}$.

We test low-rank recovery under different values of $\lambda=0.1,0.2,0.5,1,2,3$. As illustrated in Fig. \ref{Parameter lambda}, we plot the relative error between the reconstructed low-rank tensor $\cL_t$ at the $t$-th iteration and the ground truth $\cL_*$. For the case of $\rho_s = 0.3$, all tested values of $\lambda$ successfully recover the low-rank structure, and a larger $\lambda$ generally leads to faster convergence. In the case of $\rho_s = 0.6$, all tested values of $\lambda$ still achieve low final relative errors. Moreover, while larger $\lambda$ values accelerate convergence, smaller ones (e.g., $\lambda=0.2$ and $\lambda=0.5$) yield slightly more accurate final recovery. These results indicate that our model exhibits low sensitivity to the choice of $\lambda$. Considering both convergence speed and practical performance, we select $\lambda=1$ as the default setting in subsequent experiments.

\begin{figure}[htbp]
\renewcommand{\arraystretch}{0.5}
\setlength\tabcolsep{0.3pt}
	\centering
	\begin{tabular}{cc}
		\includegraphics[width=0.5\linewidth]{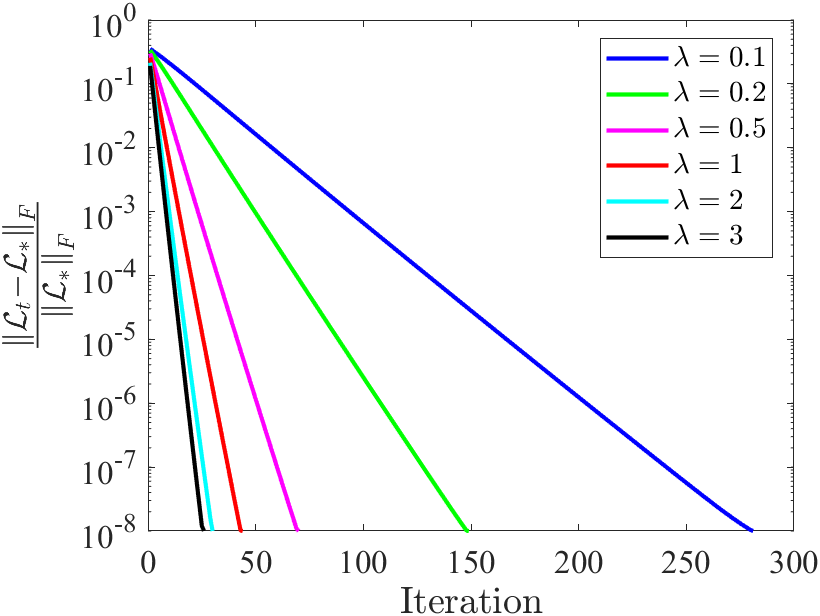} & 
		\includegraphics[width=0.5\linewidth]{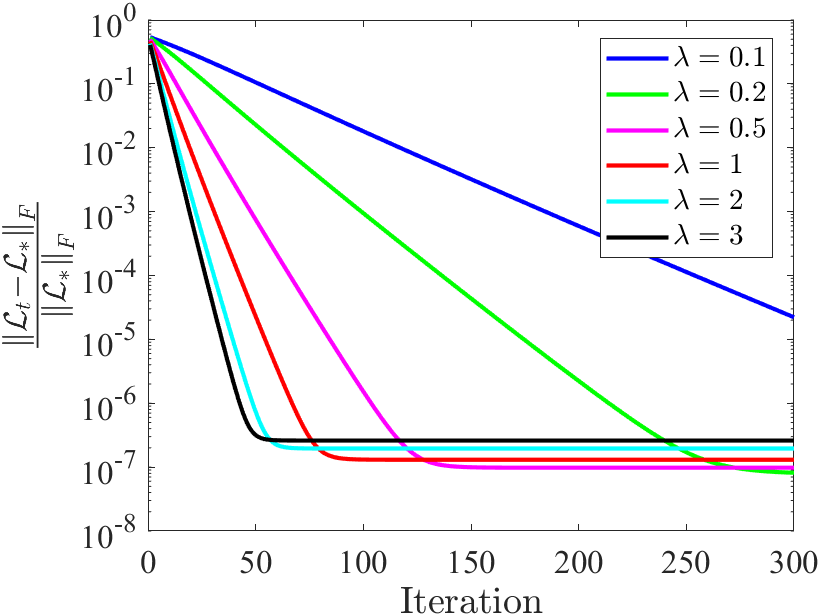}
	\end{tabular}
	\caption{Relative error of $\cL_t$ versus iteration number for different values of $\lambda$. The left and right plots correspond to $\rho=0.3$ and $\rho=0.6$, respectively.}
	\label{Parameter lambda}
\end{figure}

\subsection{Applications}\label{Applications}

In this subsection, the proposed method is applied to real-world tasks, such as face image denoising, background subtraction, and hyperspectral image denoising. The methods compared include TNN \cite{lu2019tensor}, KBR \cite{xie2017kronecker}, LRTV \cite{he2015total}, ETRPCA \cite{gao2020enhanced}, FTTNN \cite{qiu2022efficient}, t-CTV \cite{wang2023guaranteed} and RTCUR-FF \cite{cai2024robust}. The parameters of all baseline methods are set based on the authors' recommendations or fine-tuned to achieve optimal results.

\subsubsection{Face Image Denoising Under Variable Illumination}\label{experiment3}

We begin by addressing the problem of recovering facial features from images affected by varying lighting, facial expressions, and noise. In this approach, the face is modeled as a low-rank component, while shadows and noise are treated as sparse outliers. This separation allows us to isolate the true facial structure from disruptive artifacts. We demonstrate the method using the Yale B dataset \cite{Georghiades2001From}, which consists of 64 face images of $192\times168$, forming a $192\times168\times64$ input tensor. To simulate realistic distortions, we add 20\% random noise, thereby increasing the difficulty of the task. 

For our method, we set the Tucker rank to $[30,30,1]$ to capture the consistent face structure across varying illumination conditions, and the number of iterations to 10. The visual results of all methods are displayed in Fig. \ref{face denoising}. The compared methods fail to fully restore the face, especially with the shadow around the nose not being completely removed. In contrast, our proposed method excels by effectively eliminating both shadows and random noise, resulting in a clearer depiction of facial features and more uniform lighting across the image, thus offering a significantly better visual outcome. Notably, our method is capable of completing the face restoration \textbf{within just one second}.

\begin{figure*}[htbp]
\renewcommand{\arraystretch}{0.45}
\setlength\tabcolsep{0.5pt}
	\centering
	\begin{tabular}{ccccccccccc}
		\includegraphics[width=0.09\linewidth]{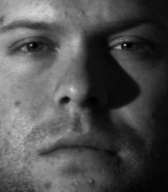} & 
		\includegraphics[width=0.09\linewidth]{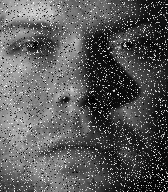} & \includegraphics[width=0.09\linewidth]{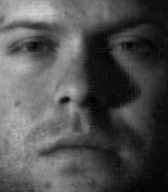} & \includegraphics[width=0.09\linewidth]{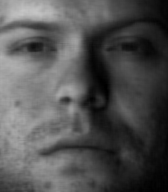}& \includegraphics[width=0.09\linewidth]{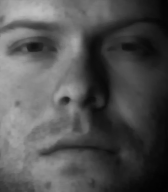}& \includegraphics[width=0.09\linewidth]{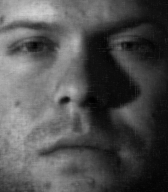}&\includegraphics[width=0.09\linewidth]{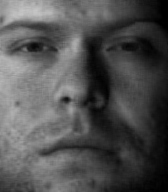}&\includegraphics[width=0.09\linewidth]{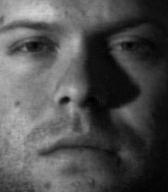}& \includegraphics[width=0.09\linewidth]{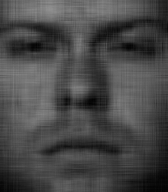}& \includegraphics[width=0.09\linewidth]{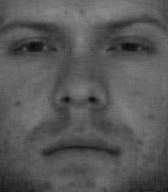} \\
		\includegraphics[width=0.09\linewidth]{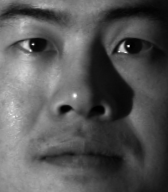} & 
		\includegraphics[width=0.09\linewidth]{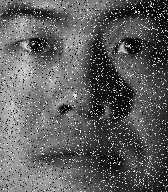} & \includegraphics[width=0.09\linewidth]{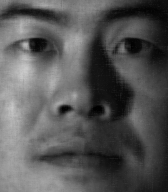} & \includegraphics[width=0.09\linewidth]{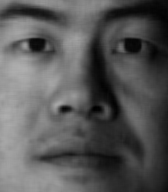}& \includegraphics[width=0.09\linewidth]{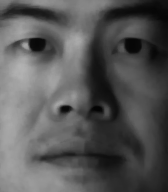}& \includegraphics[width=0.09\linewidth]{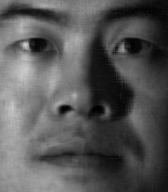}&\includegraphics[width=0.09\linewidth]{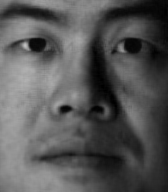}&
		\includegraphics[width=0.09\linewidth]{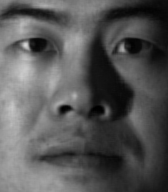}& \includegraphics[width=0.09\linewidth]{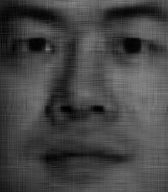}& \includegraphics[width=0.09\linewidth]{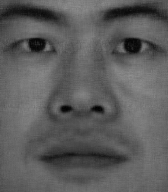} \\
		\includegraphics[width=0.09\linewidth]{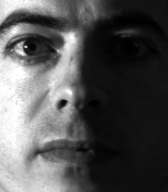} & 
		\includegraphics[width=0.09\linewidth]{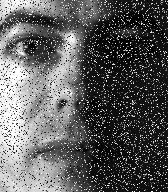} & \includegraphics[width=0.09\linewidth]{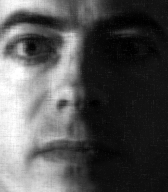} & \includegraphics[width=0.09\linewidth]{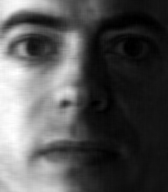}& \includegraphics[width=0.09\linewidth]{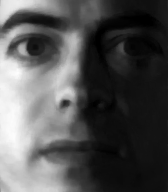}& \includegraphics[width=0.09\linewidth]{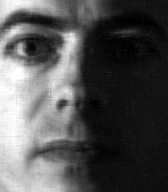}&\includegraphics[width=0.09\linewidth]{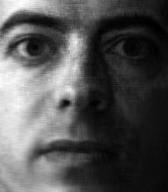}&
		\includegraphics[width=0.09\linewidth]{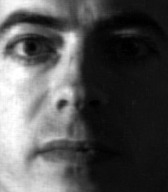}& \includegraphics[width=0.09\linewidth]{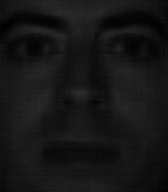}& \includegraphics[width=0.09\linewidth]{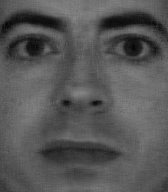} \\
		\footnotesize Original & \footnotesize Noisy & \footnotesize TNN & \footnotesize KBR & \footnotesize LRTV & \footnotesize ETRPCA & \footnotesize FTTNN & \footnotesize t-CTV & \footnotesize RTCUT-FF & \footnotesize Ours \\
        \footnotesize{--} & \footnotesize{--} & \footnotesize{36.17s} & \footnotesize{55.02s} & \footnotesize{9.04s} & \footnotesize{114.04s} & \footnotesize{8.79s} & \footnotesize{90.48s} & \footnotesize{31.19s} & \footnotesize{\textbf{0.79s}}
	\end{tabular}
	\caption{Noise and shadows removal from face images of subject 1 (row I), subject 2 (row II), and subject 6 (row III). The values below each method indicate the average running time in seconds (s).}
	\label{face denoising}
\end{figure*}

\subsubsection{Background Subtraction}\label{experiment5}

This section demonstrates the application of our proposed algorithm for background subtraction, aiming to separate foreground objects from the background in a video sequence. In these videos, the background can be modeled as a low-rank tensor, while the foreground is sparse and exhibits significant changes, making it well-suited for sparse outlier detection. For our experiments, we use four videos from the CDnet dataset \cite{goyette2012changedetection}: ``blizzard'', ``office'', ``skating'', and ``snowFall''. We select 300 frames from each, and downsample them to $160\times 180\times 3$.  Since not all frames contain ground truth data, we select 20 frames with labeled ground truth \cite{zhou2019bayesian}.

For evaluation, we focus on comparing the sparse component (foreground) $\widehat{\cS}$ with the ground truth. Following \cite{yang2022nonconvex}, we apply the hard thresholding function to binarize each slice of $\widehat{\cS}$, using the standard deviation of each slice as the threshold. Then, $5 \times 5$ median filter is applied to the binarized foreground tensor. We treat the processed foreground tensor and the ground truth tensor $\cS_{\text{gt}}$ as a binary classification problem. The performance is evaluated using precision, recall, and F-measure metrics:
\begin{align*}
	&\text{Precision}=\frac{\mathrm{TP}}{\mathrm{TP}+\mathrm{FP}}, \text{Recall}=\frac{\mathrm{TP}}{\mathrm{TP}+\mathrm{FN}}, \\
	&\text {F-measure}=2 \frac{\text {Precision} \cdot \text {Recall}}{\text {Precision}+ \text{Recall}}.
\end{align*}
Here, TP (True Positive) represents the number of correctly detected foreground pixels; FP (False Positive) represents pixels incorrectly detected as foreground; and FN (False Negative) represents foreground pixels that were missed.

For our model, the input videos are reshaped into a tensor of size $[\text{height}\times\text{width},\text{channel},\text{frame}]$. We set the Tucker rank to $\br = [3,3,1]$ and the maximum number of iterations to $T = 40$. The Tucker rank $\br = [3, 3, 1]$ is chosen to capture the low-rank background, with $r_1 = 3$ and $r_2 = 3$ for spatial and channel redundancy, and $r_3 = 1$ for the static frame structure. In the oracle case, we set $\cW_{ijk} = 0$ when $\cS_{\text{gt}} \neq 0$ and set the remaining elements to 1. The background subtraction results are shown in Fig. \ref{fig:background}, and the numerical results are summarized in Table \ref{tab:background}. It can be observed that most of the compared methods fail to fully separate the foreground and background in certain video frames. Note that t-CTV \cite{wang2023guaranteed} cannot perform background subtraction as its model is primarily designed for tensor recovery, and therefore it is not considered a comparison method in this experiment. While RTCUR-FF achieves computational efficiency through its sampling-based strategy with a low Tucker rank setting ($[3, 3, 1]$), this approach results in incomplete background separation due to small size sampling, as demonstrated in Fig. \ref{fig:background} (``skating'' video). 
Our method maintains superior separation quality with stable performance, outperforming all other baseline methods (except RTCUR-FF) by at least 2.5 times in speed. 
We remark,  when replacing Tucker decomposition with CUR decomposition in our framework, the computational efficiency becomes comparable to RTCUR-FF while preserving the robustness advantages of our approach. 

\begin{figure*}[htbp]
	\renewcommand{\arraystretch}{0.5}
\setlength\tabcolsep{0.5pt}
\centering
	\begin{tabular}{ccccccccc}
		\includegraphics[width=0.1\linewidth]{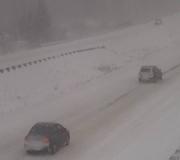} & \includegraphics[width=0.1\linewidth]{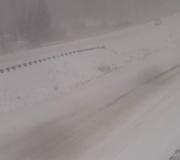} & \includegraphics[width=0.1\linewidth]{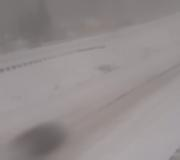}& \includegraphics[width=0.1\linewidth]{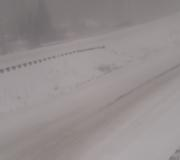}& \includegraphics[width=0.1\linewidth]{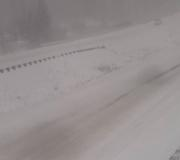}& \includegraphics[width=0.1\linewidth]{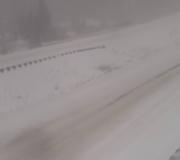}
        & \includegraphics[width=0.1\linewidth]{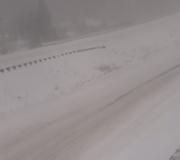}& \includegraphics[width=0.1\linewidth]{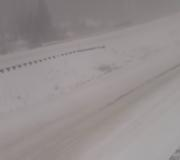}& \includegraphics[width=0.1\linewidth]{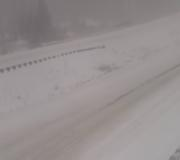} \\
		\includegraphics[width=0.1\linewidth]{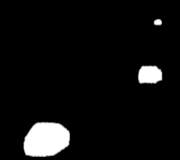} & \includegraphics[width=0.1\linewidth]{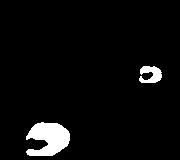} & \includegraphics[width=0.1\linewidth]{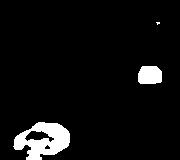}& \includegraphics[width=0.1\linewidth]{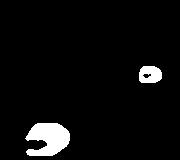}& \includegraphics[width=0.1\linewidth]{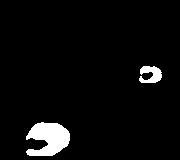}& \includegraphics[width=0.1\linewidth]{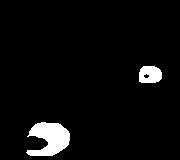}& \includegraphics[width=0.1\linewidth]{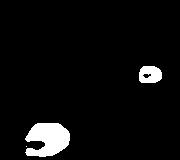}& \includegraphics[width=0.1\linewidth]{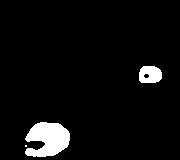}& \includegraphics[width=0.1\linewidth]{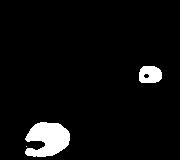} \\
		\includegraphics[width=0.1\linewidth]{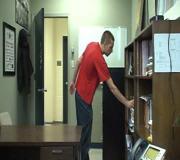} & \includegraphics[width=0.1\linewidth]{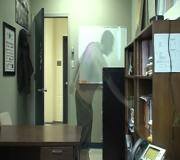} & \includegraphics[width=0.1\linewidth]{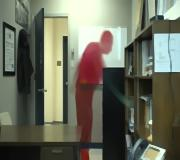}& \includegraphics[width=0.1\linewidth]{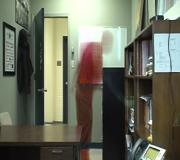}& \includegraphics[width=0.1\linewidth]{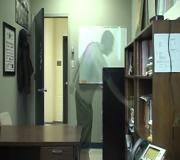}& \includegraphics[width=0.1\linewidth]{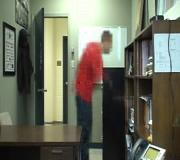}& \includegraphics[width=0.1\linewidth]{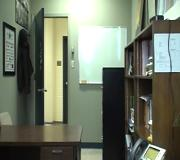}& \includegraphics[width=0.1\linewidth]{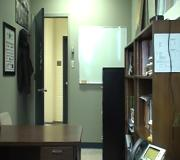}& \includegraphics[width=0.1\linewidth]{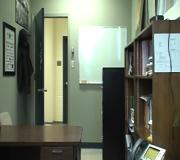} \\
		\includegraphics[width=0.1\linewidth]{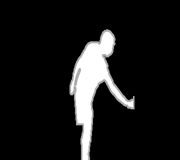} & \includegraphics[width=0.1\linewidth]{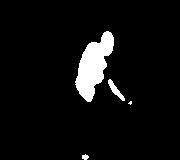} & \includegraphics[width=0.1\linewidth]{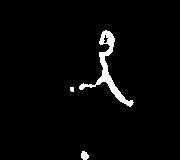}& \includegraphics[width=0.1\linewidth]{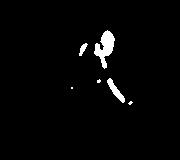}& \includegraphics[width=0.1\linewidth]{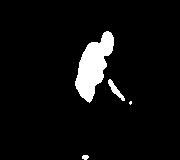}& \includegraphics[width=0.1\linewidth]{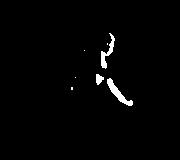}& \includegraphics[width=0.1\linewidth]{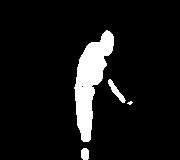}& \includegraphics[width=0.1\linewidth]{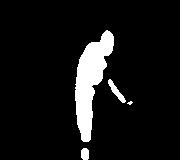}& \includegraphics[width=0.1\linewidth]{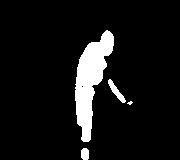} \\
		\includegraphics[width=0.1\linewidth]{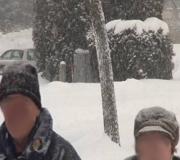} & \includegraphics[width=0.1\linewidth]{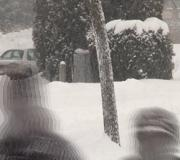} & \includegraphics[width=0.1\linewidth]{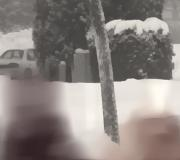}& \includegraphics[width=0.1\linewidth]{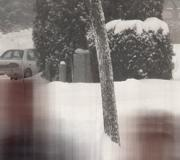}& \includegraphics[width=0.1\linewidth]{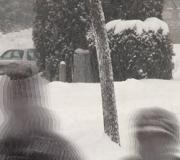}& \includegraphics[width=0.1\linewidth]{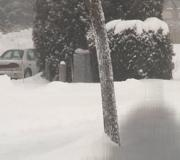}& \includegraphics[width=0.1\linewidth]{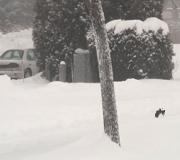}& \includegraphics[width=0.1\linewidth]{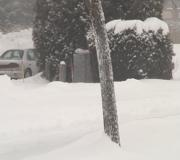}& \includegraphics[width=0.1\linewidth]{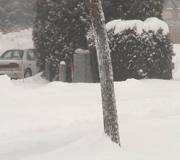} \\
		\includegraphics[width=0.1\linewidth]{figures/Background_subtraction_results/skating_gt} & \includegraphics[width=0.1\linewidth]{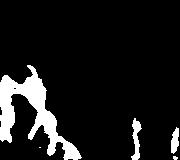} & \includegraphics[width=0.1\linewidth]{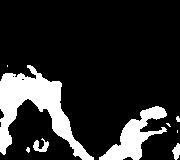}& \includegraphics[width=0.1\linewidth]{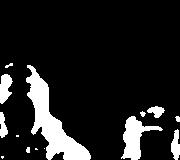}& \includegraphics[width=0.1\linewidth]{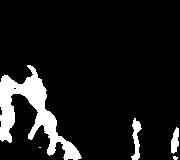}& \includegraphics[width=0.1\linewidth]{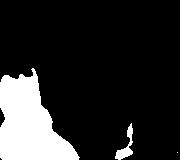}& \includegraphics[width=0.1\linewidth]{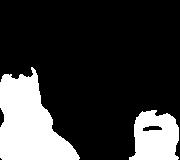}& \includegraphics[width=0.1\linewidth]{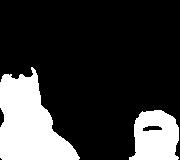}& \includegraphics[width=0.1\linewidth]{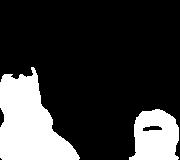} \\
		\includegraphics[width=0.1\linewidth]{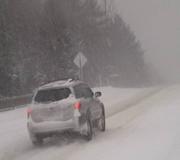} & \includegraphics[width=0.1\linewidth]{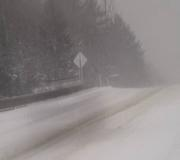} & \includegraphics[width=0.1\linewidth]{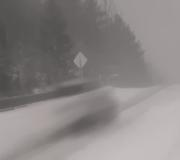}& \includegraphics[width=0.1\linewidth]{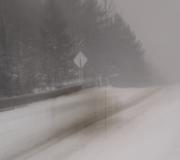}& \includegraphics[width=0.1\linewidth]{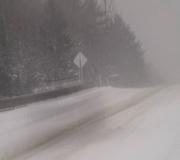}& \includegraphics[width=0.1\linewidth]{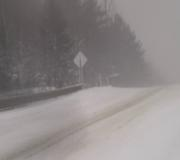}& \includegraphics[width=0.1\linewidth]{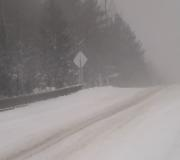}& \includegraphics[width=0.1\linewidth]{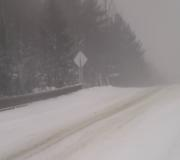}& \includegraphics[width=0.1\linewidth]{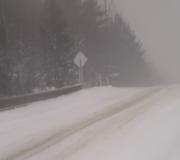} \\
		\includegraphics[width=0.1\linewidth]{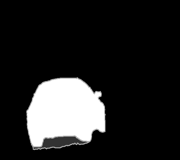} & \includegraphics[width=0.1\linewidth]{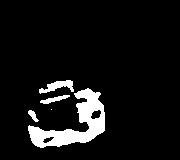} & \includegraphics[width=0.1\linewidth]{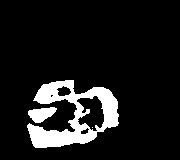}& \includegraphics[width=0.1\linewidth]{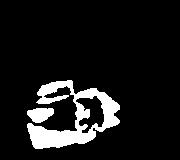}& \includegraphics[width=0.1\linewidth]{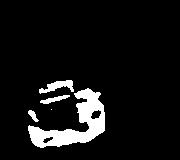}& \includegraphics[width=0.1\linewidth]{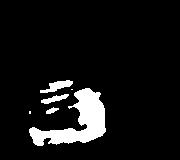}& \includegraphics[width=0.1\linewidth]{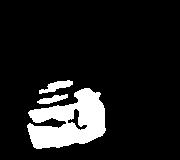}& \includegraphics[width=0.1\linewidth]{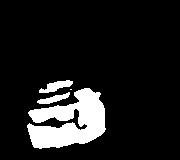}& \includegraphics[width=0.1\linewidth]{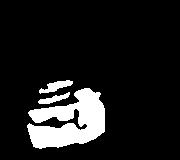} \\
		\footnotesize Original  & \footnotesize TNN & \footnotesize LRTV & \footnotesize KBR & \footnotesize ETRPCA & \footnotesize FTTNN & \footnotesize RTCUR-FF & \footnotesize Ours & \footnotesize Oracle
	\end{tabular}
	\caption{Visual comparison of background and foreground extraction from four videos. The first column shows the original data and ground truth, while the subsequent columns display the background and foreground extracted by different methods. Each two rows from top to bottom correspond to videos: ``blizzard'', ``office'', ``skating'', and ``snowFall'', respectively.}
 \label{fig:background}
\end{figure*}
 
\begin{table*}[htbp]
	\centering\footnotesize\tabcolsep=1.mm
	\caption{Quantitative results for background subtraction on CDnet dataset. 
    }
\label{tab:background}
\begin{tabular}{l|c|c|c|c|c|c|c|c||c}
\hline
Videos                    & Metrics     & TNN    & LRTV   & KBR    & ETRPCA & FTTNN & RTCUR-FF  & Ours   & Oracle \\ \hline
\multirow{4}{*}{blizzard} 
		& Precision$\uparrow$  & 0.9918 & 0.9394 & 0.9838 & 0.9922 & 0.9716 & 0.9714 & 0.9746 & 0.9741 \\
		& Recall$\uparrow$  & 0.7395 & 0.8273 & 0.8190 & 0.7401 & 0.8328 & 0.8454 & 0.8440 & 0.8433\\
		& \textbf{F-measure$\uparrow$} & 0.8473 & 0.8798 & 0.8939 & 0.8478 & 0.8969 & \underline{0.9040} & \textbf{0.9046} & 0.9040 \\
		& \textbf{Time$\downarrow$} & 235.77 & 129.43 & 496.96 & 555.48 & 144.77 & \textbf{5.01} & \underline{46.71} & 44.50 \\ \hline
\multirow{4}{*}{office} 
		& Precision$\uparrow$ & 0.9960 & 0.9036 & 0.9348 & 0.9966 & 0.9043 & 0.9874 & 0.9875 & 0.9953\\
		& Recall$\uparrow$  & 0.5150 & 0.4517 & 0.4739 & 0.5399 & 0.4249 & 0.7216 & 0.7249 & 0.7209\\
		& \textbf{F-measure$\uparrow$} & 0.6789 & 0.6023 & 0.6289 & 0.7004 & 0.5781 & \underline{0.8338} & \textbf{0.8361} & 0.8362\\
		& \textbf{Time$\downarrow$} & 233.51 & 137.04 & 614.99 & 566.31 & 159.05 & \textbf{5.65} & \underline{48.58} & 44.95 \\ \hline
\multirow{4}{*}{skating} 
		& Precision$\uparrow$ & 0.8033 & 0.8819 & 0.7207 & 0.8033 & 0.9515 & 0.9898 & 0.9971 & 0.9970\\
		& Recall$\uparrow$ & 0.3391 & 0.5829 & 0.4426 & 0.3406 & 0.6130 & 0.7289 & 0.7333 & 0.7355\\
		& \textbf{F-measure$\uparrow$} & 0.4769 & 0.7019 & 0.5484 & 0.4784 & 0.7456 & \underline{0.8396} & \textbf{0.8451} & 0.8465\\
		& \textbf{Time$\downarrow$} & 241.61 & 136.70 & 717.82 & 527.29 & 156.64 & \textbf{4.99} & \underline{49.32} & 47.12 \\ \hline
\multirow{4}{*}{snowFall} 
		& Precision$\uparrow$ & 0.9239 & 
        0.8624 & 0.8885 & 0.9234 & 0.8852 & 0.9032 & 0.9032 & 0.9039\\
		& Recall$\uparrow$ & 0.4559 & 0.6175 & 0.6276 & 0.4572 & 0.6688 & 0.7463 & 0.7491 & 0.7455\\
		& \textbf{F-measure$\uparrow$} & 0.6105 & 0.7197 & 0.7356 & 0.6116 & 0.7619 & \underline{0.8173} & \textbf{0.8190} & 0.8171\\
		& \textbf{Time$\downarrow$} & 244.95 & 137.93 & 583.68 & 516.01 & 153.14 & \textbf{4.87} & \underline{48.61} & 44.56 \\ \hline
\end{tabular}
\end{table*}

To evaluate the performance of our proposed outlier-aware weighting approach, we conducted a comparative analysis against both the oracle weight and ground truth annotations, as illustrated in Fig. \ref{Fig:outlier-aware weight}.
The left subfigure shows the ground truth binary mask, where white regions (value = 1) indicate outlier-contaminated areas, and black regions (value = 0) represent outlier-free zones.
The middle subfigure displays the oracle weight, which is derived from the ground truth annotations: outlier pixels are assigned a weight of 0, while outlier-free pixels are assigned a weight of 1.
The right subfigure depicts the outlier-aware weight produced by our proposed model.
We can observe that the estimated outlier-aware weight closely matches the oracle weight. Notably, the results demonstrate that our method not only accurately identifies moving people and vehicles but also effectively detects dynamic snow, as evidenced by the scattered patterns in the outlier-aware weight.

\begin{figure}[htbp]
\renewcommand{\arraystretch}{0.5}
\setlength\tabcolsep{0.3pt}
	\centering
	\begin{tabular}{ccc}
\includegraphics[width=0.3\linewidth]{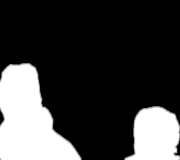} & 
\includegraphics[width=0.3\linewidth]{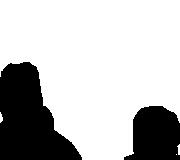} & 	\includegraphics[width=0.3\linewidth]{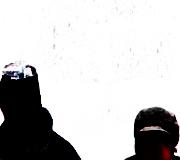}  \\

\includegraphics[width=0.3\linewidth]{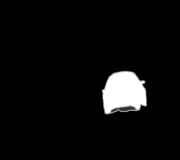} & \includegraphics[width=0.3\linewidth]{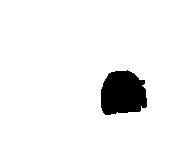}  & 
		
\includegraphics[width=0.3\linewidth]{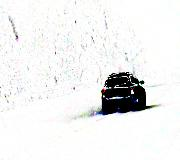}\\
		\footnotesize (a) Ground truth  & \footnotesize (b) Oracle &  \footnotesize (c) Outlier-aware  \\
	\end{tabular}
	\caption{Comparison of weight tensors in background subtraction: (a) Ground truth binary mask (white for outlier, black for clean). (b) Oracle weight (black for outlier, white for clean). (c) Outlier-aware weight estimated by our model.}
	\label{Fig:outlier-aware weight}
\end{figure}

\subsubsection{Hyperspectral Image Denoising}\label{experiment4}

We perform denoising experiments on hyperspectral images (HSIs) from the Indian Pines and Pavia University (PaviaU) datasets, with sizes of $145 \times 145 \times 220$ and $610 \times 340 \times 103$, respectively. To evaluate the quality of denoising, we adopt several widely used image quality metrics, including the peak signal-to-noise ratio (PSNR), the structural similarity index (SSIM) \cite{wang2004image}, which is computed by averaging across all spectral bands, and the erreur relative globale adimensionnelle de synthèse (ERGAS) \cite{wald2002data}.

To evaluate the performance of our proposed weight \eqref{eq:W_impulsive} in handling impulsive noise, we first conduct a denoising experiment on HSIs contaminated by 30\% salt-and-pepper noise. In this case, we assign the weight $\cW_{ijk}=0$ if $\cX_{ijk}=0$ or 1, and $\cW_{ijk}=1$ otherwise. To ensure robust recovery of the underlying HSI structure, we set the Tucker rank to $\br=\left[\lceil0.7n_{1}\rceil,\lceil0.7n_{2}\rceil,\lceil0.05n_{3}\rceil\right]$, where $\lceil \cdot \rceil$ denotes the ceiling function. The higher ranks along the spatial dimensions preserve the detailed structural information of the image, while the smaller rank along the spectral dimension reflects the inherent low-rank property of the spectral signatures under known noise conditions. We perform 80 iterations to achieve improved recovery results. For the oracle case, the tensor weight $\cW$ is pre-defined via \eqref{eq:W}, using the same parameter settings as described above.

Next, we evaluate the model's performance on HSIs contaminated with 30\% random noise. In this case, we use an exponential outlier-aware weight tensor to identify and detect the noise. To further increase task complexity, we introduce stripe noise across bands 1 to 60 of the HSIs. For each band, we generate 20 to 40 columns of stripe noise with intensity values uniformly distributed in the range $[-0.25, 0.25]$. This noise is randomly distributed across columns, with intensity remaining constant within each stripe. Given the increased complexity and detection uncertainty, we adjust the Tucker rank to $\br=\left[\lceil0.7n_{1}\rceil,\lceil0.7n_{2}\rceil,\lceil0.02n_{3}\rceil\right]$, where the spectral rank is reduced to impose a stronger low-rank constraint. This adjustment helps compensate for less precise noise localization and aids in robust recovery.

The denoising numerical results are shown in Table \ref{tab:HSI}, and the visual results are provided in Fig. \ref{fig:HSI}. In the salt-and-pepper noise scenario, we used \eqref{eq:W_impulsive} to determine the weight tensor, achieving results close to the oracle case, significantly outperforming the comparison methods. In the two tested HSIs, the PSNR is more than 2 dB higher than that of other methods. In the remaining two scenarios, our method also shows improvements of at least 0.5 dB over others. Besides, for stripe noise, many methods fail to remove the noise effectively, while our method maintains good performance. Among all the compared methods, our approach also incurs the lowest computational cost.

\begin{figure*}[htbp]
\renewcommand{\arraystretch}{0.3}
\setlength\tabcolsep{0.3pt}
	\centering
	\begin{tabular}{cccccccccc}
		\includegraphics[width=0.09\linewidth]{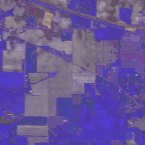} & 
		\includegraphics[width=0.09\linewidth]{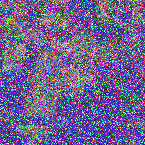} & \includegraphics[width=0.09\linewidth]{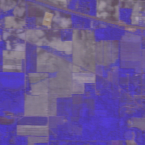} & \includegraphics[width=0.09\linewidth]{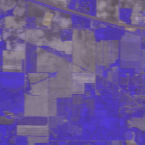}& \includegraphics[width=0.09\linewidth]{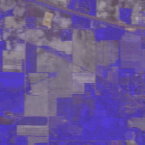}& \includegraphics[width=0.09\linewidth]{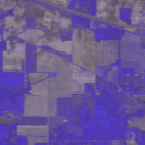}& \includegraphics[width=0.09\linewidth]{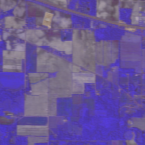}& \includegraphics[width=0.09\linewidth]{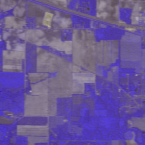}& \includegraphics[width=0.09\linewidth]{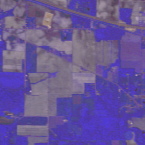}& \includegraphics[width=0.09\linewidth]{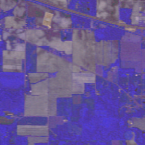} \\
        \includegraphics[width=0.09\linewidth]{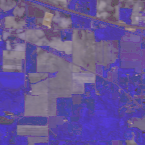} & 
		\includegraphics[width=0.09\linewidth]{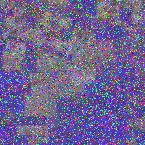} & \includegraphics[width=0.09\linewidth]{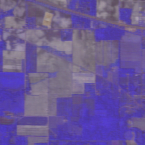} & \includegraphics[width=0.09\linewidth]{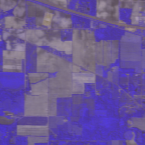}& \includegraphics[width=0.09\linewidth]{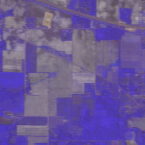}& \includegraphics[width=0.09\linewidth]{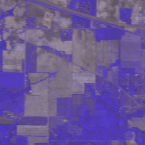}& \includegraphics[width=0.09\linewidth]{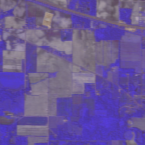}& \includegraphics[width=0.09\linewidth]{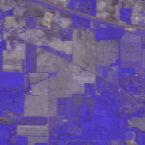}& \includegraphics[width=0.09\linewidth]{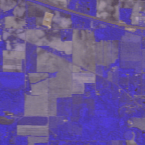}& \includegraphics[width=0.09\linewidth]{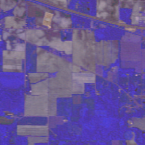} \\
	    \includegraphics[width=0.09\linewidth]{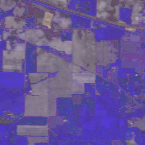}	&
		\includegraphics[width=0.09\linewidth]{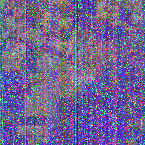} & \includegraphics[width=0.09\linewidth]{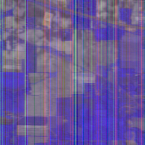} & \includegraphics[width=0.09\linewidth]{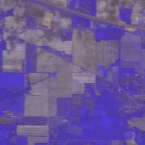}& \includegraphics[width=0.09\linewidth]{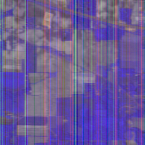}& \includegraphics[width=0.09\linewidth]{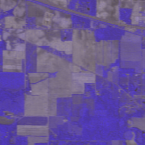}& \includegraphics[width=0.09\linewidth]{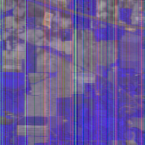}& \includegraphics[width=0.09\linewidth]{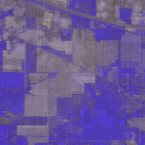}& \includegraphics[width=0.09\linewidth]{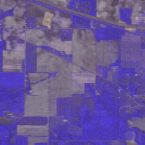}& \includegraphics[width=0.09\linewidth]{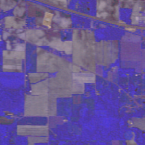} \\
		\includegraphics[width=0.09\linewidth]{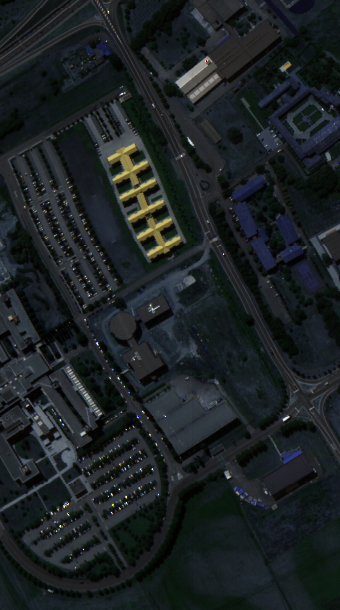} & 
		\includegraphics[width=0.09\linewidth]{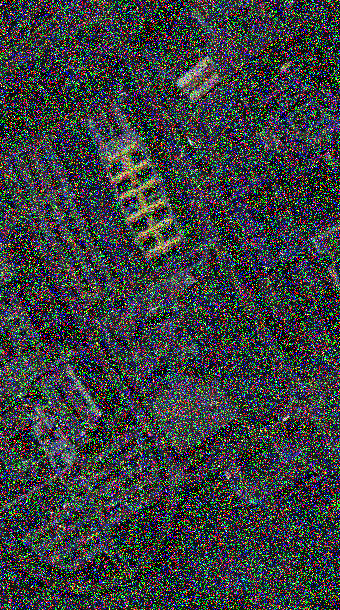} & \includegraphics[width=0.09\linewidth]{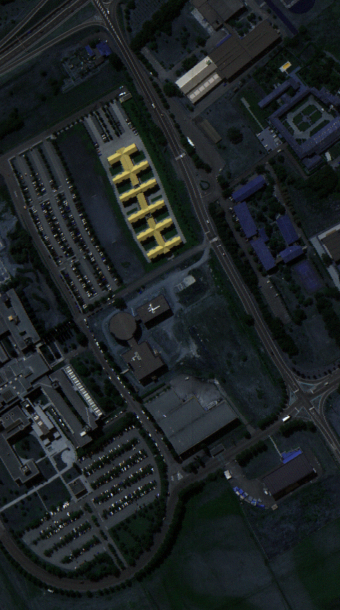} & \includegraphics[width=0.09\linewidth]{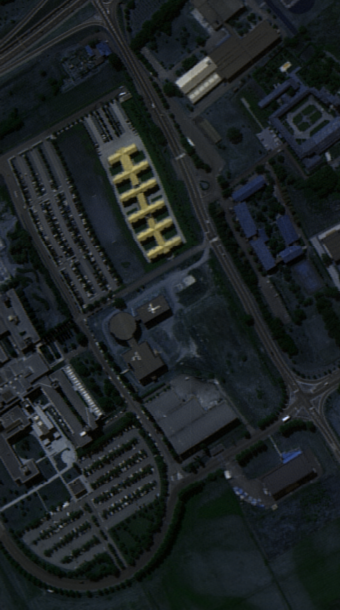}& \includegraphics[width=0.09\linewidth]{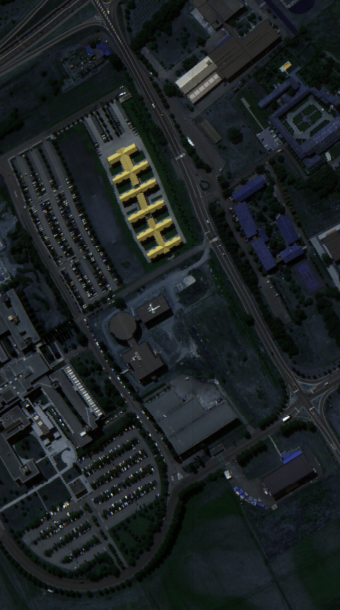}& \includegraphics[width=0.09\linewidth]{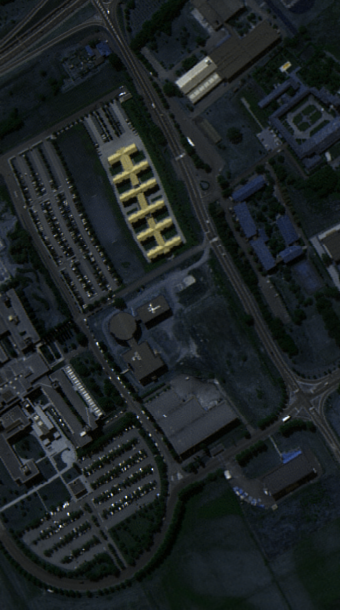}& \includegraphics[width=0.09\linewidth]{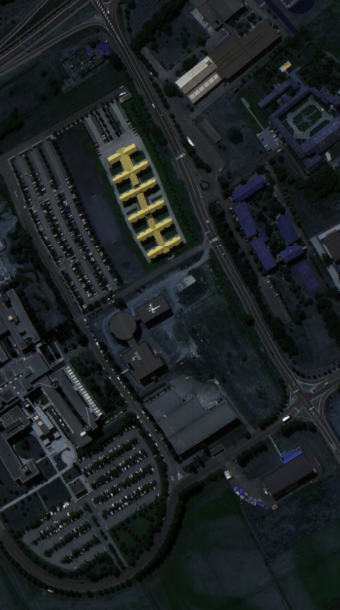}& \includegraphics[width=0.09\linewidth]{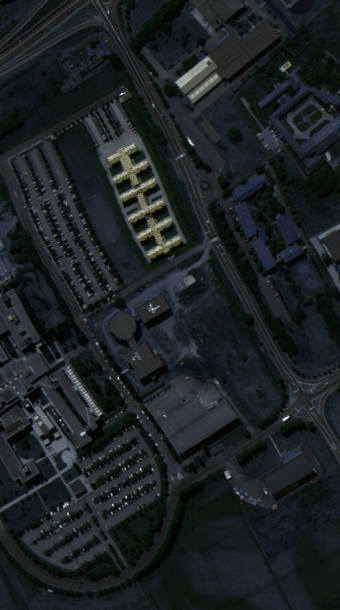}& \includegraphics[width=0.09\linewidth]{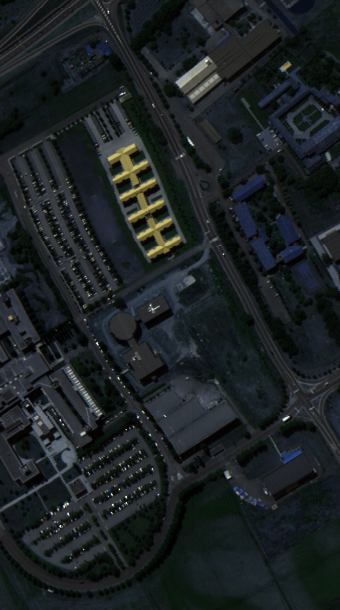}& \includegraphics[width=0.09\linewidth]{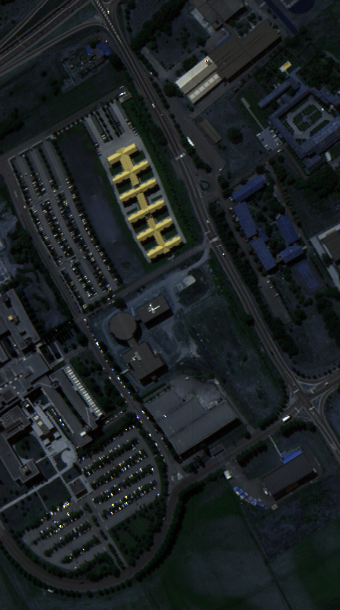} \\
        \includegraphics[width=0.09\linewidth]{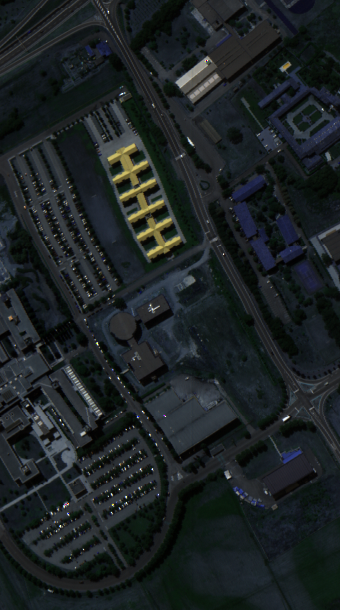} & 
		\includegraphics[width=0.09\linewidth]{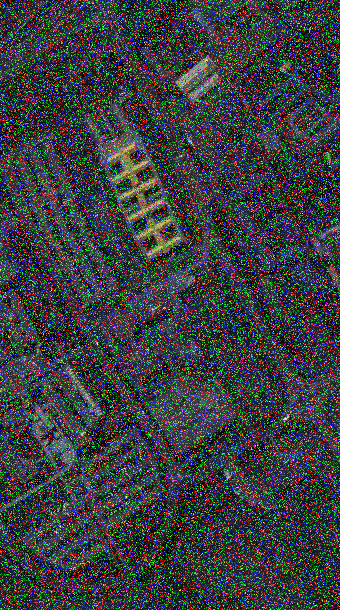} & \includegraphics[width=0.09\linewidth]{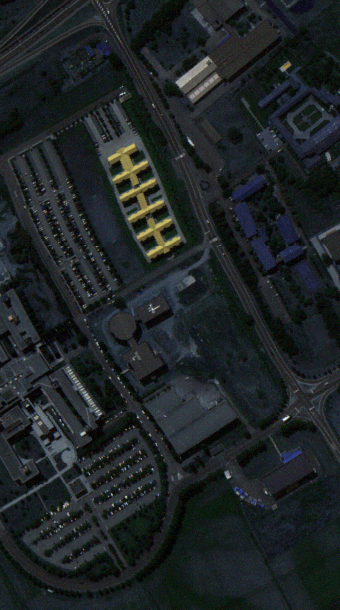} & \includegraphics[width=0.09\linewidth]{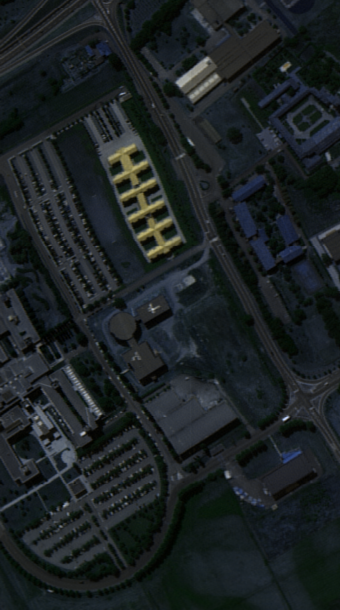}& \includegraphics[width=0.09\linewidth]{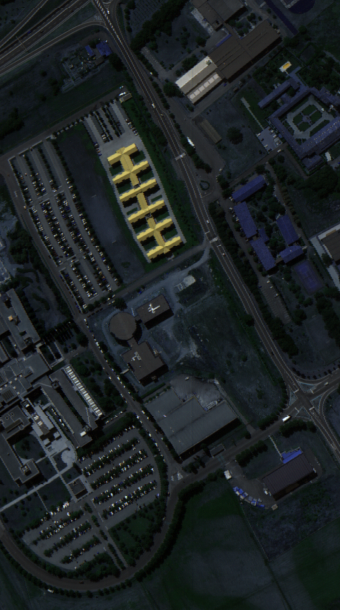}& \includegraphics[width=0.09\linewidth]{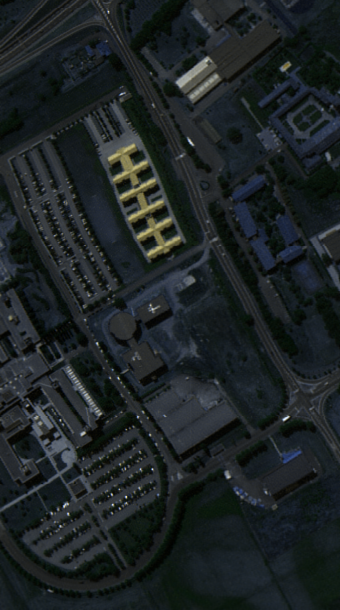}& \includegraphics[width=0.09\linewidth]{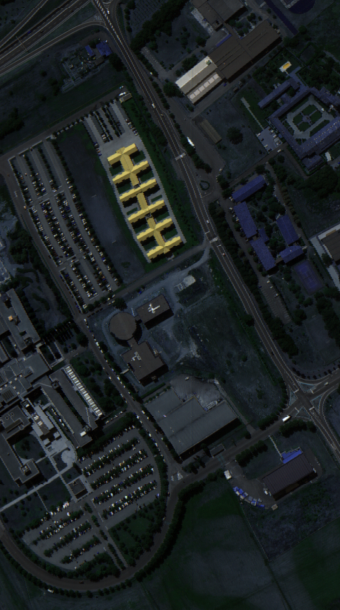}& \includegraphics[width=0.09\linewidth]{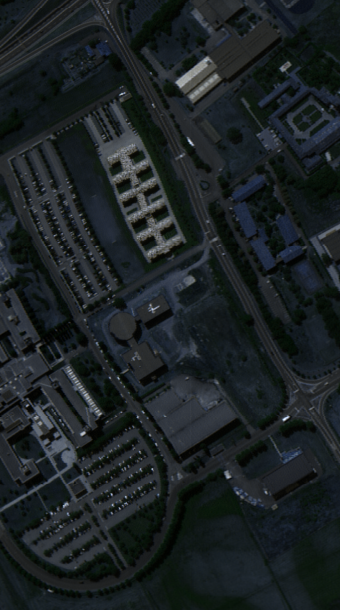}& \includegraphics[width=0.09\linewidth]{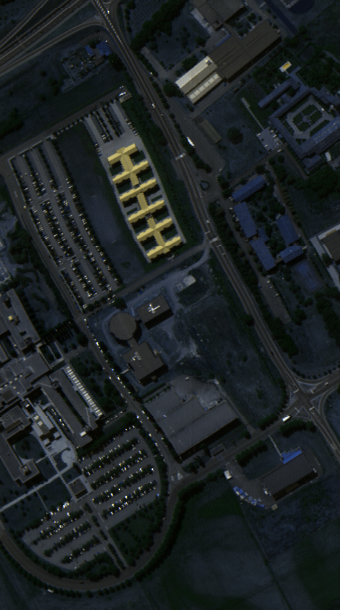}& \includegraphics[width=0.09\linewidth]{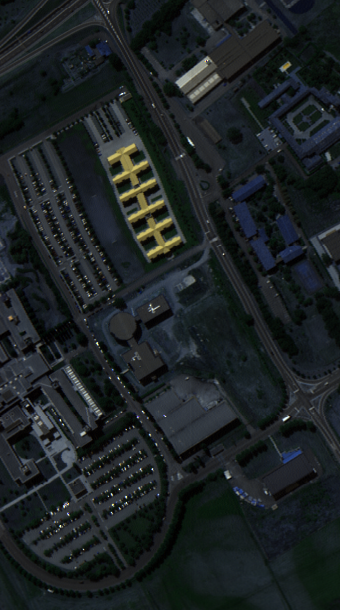} \\
	   \includegraphics[width=0.09\linewidth]{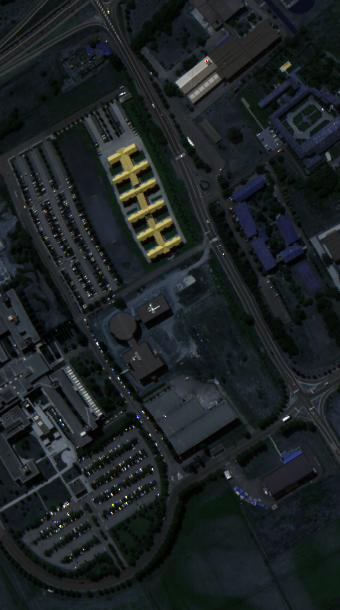}	&
		\includegraphics[width=0.09\linewidth]{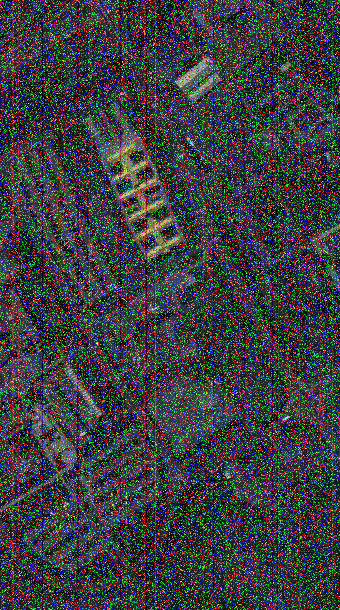} & \includegraphics[width=0.09\linewidth]{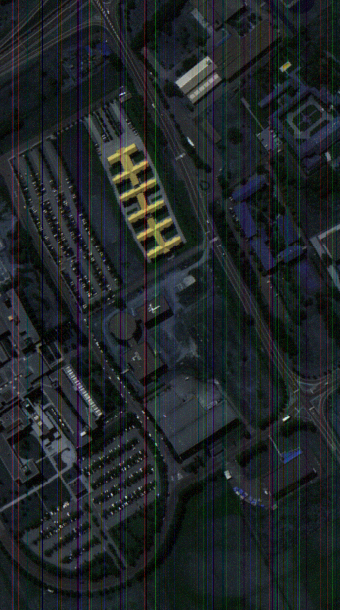} & \includegraphics[width=0.09\linewidth]{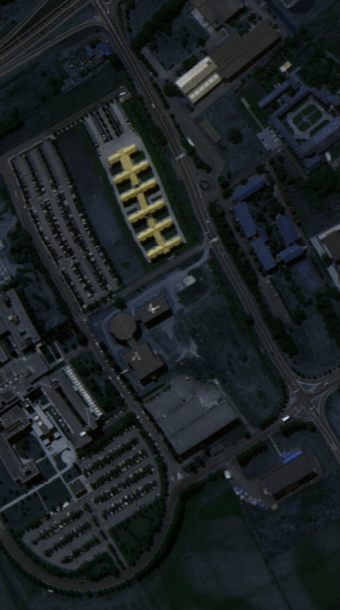}& \includegraphics[width=0.09\linewidth]{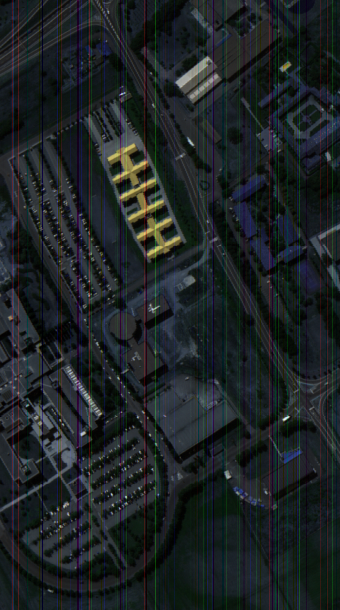}& \includegraphics[width=0.09\linewidth]{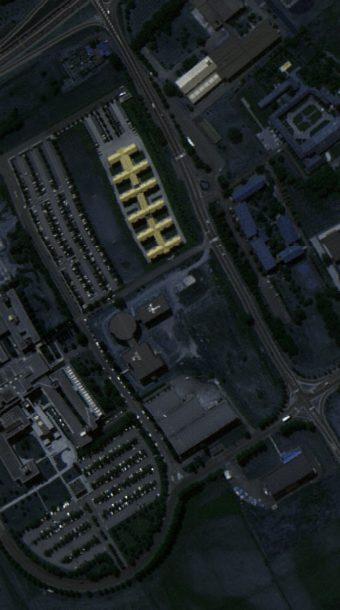}& \includegraphics[width=0.09\linewidth]{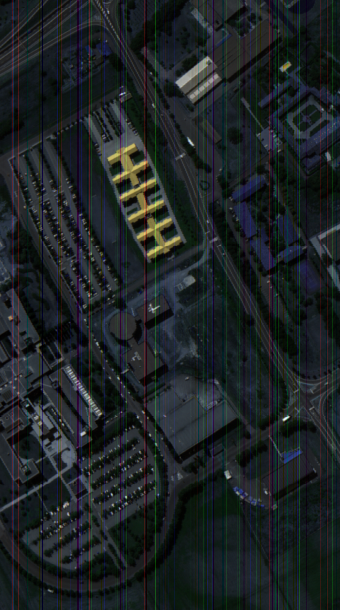}& \includegraphics[width=0.09\linewidth]{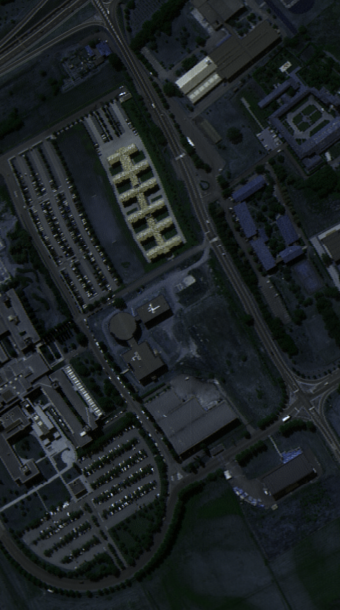}& \includegraphics[width=0.09\linewidth]{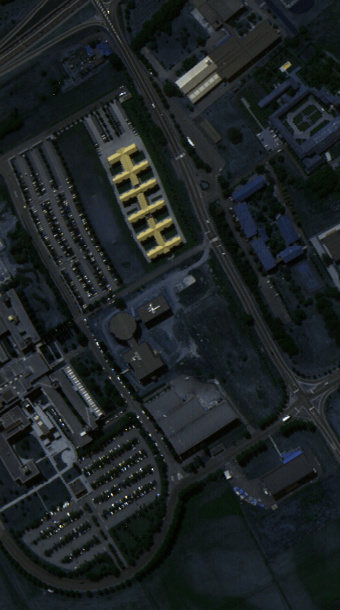}& \includegraphics[width=0.09\linewidth]{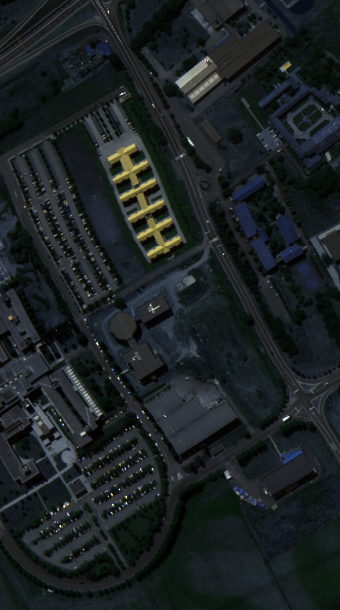} \\
		\footnotesize Original & \footnotesize Observed & \footnotesize TNN & \footnotesize KBR & \footnotesize ETRPCA & \footnotesize FTTNN & \footnotesize t-CTV & \footnotesize RTCUR-FF & \footnotesize Ours & \footnotesize Oracle
	\end{tabular}
	\caption{Visual results on the Indian Pines and PaviaU datasets with different noise types. Each group shows three rows: the first row corresponds to 30\% salt-and-pepper noise, the second to 30\% random noise, and the third to a combination of 30\% random and stripe noise. The 20th, 30th, and 50th bands are visualized.}
 \label{fig:HSI}
\end{figure*}

\begin{table*}[htbp]
	\centering\footnotesize\tabcolsep=1.mm
	\caption{Performance comparison of various methods for HSI denoising under different noise types.}
	\label{tab:HSI}
	\begin{tabular}{c|c|c|c|c|c|c|c|c|c|c||c}
		\hline
		HSIs & Noise & Metrics & Observed & TNN & KBR & ETRPCA & FTTNN & t-CTV & RTCUR-FF & Ours & Oracle \\ \hline
		\multirow{10}{*}{\makecell{Indian Pines\\$(145\times145\times220)$}} 
		& \multirow{3}{*}{\makecell{salt-and\\-pepper}} 
		& PSNR$\uparrow$ & 9.98 & 37.25 & 37.83 & 38.01 & 37.59 & \underline{38.88} & 37.81 & \textbf{41.03} & 41.03 \\ 
		& & SSIM$\uparrow$ & 0.0207 & 0.9574 & 0.9598 & 0.9613 & 0.9608 & \underline{0.9661} & 0.9612 & \textbf{0.9728} & 0.9727 \\ 
		& & ERGAS$\downarrow$ & 77.40 & 2.20 & 2.14 & 2.04 & 2.15 & \underline{1.85} & 2.11 & \textbf{1.49} & 1.49 \\ \cline{2-12}
		& \multirow{3}{*}{random} 
		& PSNR$\uparrow$ & 12.97 & 36.55 & 37.71 & 37.39 & 37.23 & \underline{38.37} & 37.97 & \textbf{39.19} & 41.04 \\ 
		& & SSIM$\uparrow$ & 0.0506 & 0.9516 & 0.9587 & 0.9559 & 0.9586 & \textbf{0.9639} & \underline{0.9609} & \textbf{0.9639} & 0.9730 \\ 
		& & ERGAS$\downarrow$ & 57.12 & 2.44 & 2.18 & 2.25 & 2.23 & \underline{1.97} & 2.08 & \textbf{1.84} & 1.48 \\ \cline{2-12}
		& \multirow{3}{*}{\makecell{random,\\stripe}} 
		& PSNR$\uparrow$ & 12.89 & 29.34 & \underline{37.70} & 29.29 & 36.47 & 29.52 & 36.30 & \textbf{38.91} & 40.90 \\ 
		& & SSIM$\uparrow$ & 0.0478 & 0.8586 & \underline{0.9586} & 0.8606 & 0.9569 & 0.8698 & 0.9455 & \textbf{0.9600} & 0.9722 \\ 
		& & ERGAS$\downarrow$ & 57.32 & 5.25 & \underline{2.18} & 5.29 & 2.39 & 5.11 & 2.52 & \textbf{1.89} & 1.50 \\ \cline{2-12}
		& \multicolumn{2}{c|}{Avg Time (s)$\downarrow$} 
		& - & 109.30 & 76.10 & 288.82 & \underline{29.96} & 210.65 & 1267.56 & \textbf{18.58} & 19.42 \\ \hline
		\multirow{10}{*}{\makecell{PaviaU\\$(610\times340\times103)$}} 
		& \multirow{3}{*}{\makecell{salt-and\\-pepper}} 
		& PSNR$\uparrow$ & 9.56 & 36.47 & 34.35 & 36.97 & 35.70 & \underline{38.09} & 35.31 & \textbf{40.35} & 40.67 \\ 
		& & SSIM$\uparrow$ & 0.0456 & 0.9526 & 0.9462 & 0.9716 & 0.9473 & \textbf{0.9772} & 0.9598 & \underline{0.9737} & 0.9734 \\ 
		& & ERGAS$\downarrow$ & 86.54 & 3.81 & 4.97 & 3.62 & 4.18 & \underline{3.25} & 4.61 & \textbf{2.52} & 2.44 \\ \cline{2-12}
		& \multirow{3}{*}{random} 
		& PSNR$\uparrow$ & 12.17 & 32.25 & 34.34 & 35.95 & 35.34 & \underline{37.32} & 34.47 & \textbf{37.87} & 40.68 \\ 
		& & SSIM$\uparrow$ & 0.0863 & 0.8569 & 0.9461 & 0.9645 & 0.9455 & \textbf{0.9740} & 0.9608 & \underline{0.9646} & 0.9735 \\ 
		& & ERGAS$\downarrow$ & 64.93 & 6.38 & 4.98 & 4.22 & 4.38 & \underline{3.62} & 5.15 & \textbf{3.35} & 2.43 \\ \cline{2-12}
		& \multirow{3}{*}{\makecell{random,\\stripe}} 
		& PSNR$\uparrow$ & 12.09 & 27.97 & 34.33 & 29.02 & \underline{35.33} & 29.27 & 35.36 & \textbf{37.76} & 40.65 \\ 
		& & SSIM$\uparrow$ & 0.0852 & 0.7630 & 0.9461 & 0.8521 & 0.9449 & 0.8595 & \underline{0.9625} & \textbf{0.9633} & 0.9733 \\ 
		& & ERGAS$\downarrow$ & 65.57 & 10.94 & 4.99 & 9.86 & \underline{4.40} & 9.62 & 4.63 & \textbf{3.39} & 2.45 \\ \cline{2-12}
		& \multicolumn{2}{c|}{Avg Time (s)$\downarrow$} 
		& - & 679.39 & 625.09 & 1887.70 & \underline{156.04} & 1250.36 & 23184.31 & \textbf{127.96} & 119.23 \\ \hline
	\end{tabular}
\end{table*}

\section{Conclusion}\label{section:conclusion}

This paper presents an outlier-aware TRPCA framework that addresses the limitations of traditional sparse outlier modeling through self-guided data augmentation and dynamic weight adaptation. By decoupling outlier suppression from low-rank factorization, our method achieves robust recovery of low-rank structures even in the presence of dense or spatially correlated outliers. Theoretical convergence guarantees ensure stability, while the absence of SVD in the optimization process enhances computational efficiency. Experimental results on synthetic and real-world datasets validate the framework’s superiority, particularly in scenarios with high outlier densities or complex noise patterns.

\bibliographystyle{IEEEtran}
\bibliography{IEEEabrv,ref}

\end{document}


\title{Outlier-aware Tensor Robust Principal Component Analysis with Self-guided Data Augmentation}

\author{Yangyang Xu, Kexin Li, Li Yang,  You-Wei Wen}

\markboth{Journal of \LaTeX\ Class Files,~Vol.~14, No.~8, August~2021}%
{Shell \MakeLowercase{\textit{et al.}}: A Sample Article Using IEEEtran.cls for IEEE Journals}


\maketitle

\section{Detailed Proof of Convergence Analysis}\label{section:convergence}

In this supplementary section, we provide a detailed proof of the convergence analysis. We begin by introducing key definitions and notations used throughout the proof, followed by the construction of surrogate functions and their properties, culminating in a convergence theorem based on the Block Successive Upper-bound Minimization (Block SUM) \cite{razaviyayn2013unified} framework.

\subsection{Key Notations and Definitions}\label{subsection:notations}

We first define the core functions and notations used in our convergence analysis. Since $\cL = (\bU_1, \bU_2, \bU_3)\cdot\cG$, we simplify $\|(\bU_1, \bU_2, \bU_3)\cdot\cG - \cY\|_{F}^2$ as $\|\cL - \cY\|_{F}^2$ and the sequence $\{\cY_t,\bU_{1t},\bU_{2t},\bU_{3t},\cG_t\}$  as the sequence $\{\cY_t,\cL_t\}$  when no ambiguity arises. 
We begin our analysis with the original objective function:
\begin{equation}\label{DAobjfunW}
 \Phi(\cY,\cL; \omega(\cZ)) = \lambda\|\cY-\cX\|_{\omega(\cZ)}^2 + \|\cL - \cY\|_F^2,
\end{equation}
where $\omega(\cZ_{ijk}) = \exp\bigl(-\tfrac{(\cZ_{ijk}-\cX_{ijk})^2}{2\gamma}\bigr)$ defines the weights.

Next, we define the Welsch function:
\begin{equation}
\psi(\cY) = 2\gamma\sum_{ijk}\bigl(1-\exp\bigl(-\tfrac{(\cY_{ijk}-\cX_{ijk})^2}{2\gamma}\bigr)\bigr).
\end{equation}
where $\gamma > 0$ is a tuning parameter controlling the function's sensitivity. This function has a quadratic surrogate:
\begin{equation}
\widehat{\psi}(\cY; \cZ) = \psi(\cZ) + \|\cY-\cX\|_{\omega(\cZ)}^2-\|\cZ-\cX\|_{\omega(\cZ)}^2.
\end{equation}

Based on the above, the complete objective function is formulated as:
\begin{align}
\Psi(\cY,\cL) &= \|\cL - \cY\|_{F}^2 + \lambda \psi(\cY). \label{objfunPsi}
\end{align}
To facilitate convergence analysis, we introduce the following surrogate functions:
\begin{align}
    \Phi_0(\cY,\cL; \cZ) &=\widehat{\Psi}(\cY,\cL; \cZ)= \|\cL - \cY\|_{F}^2 + \lambda \widehat{\psi}(\cY; \cZ), \label{surrogate Y}\\
    \Phi_{i}(\cY,\bU_1,\bU_2,\bU_3,\cG;\bC_{i})&=\Psi(\cY,\bU_1,\bU_2,\bU_3,\cG)+\alpha_1\left\|\bU_{i}-\bC_{i}\right\|_{F}^{2} ,\  i=1,2,3,\label{surrogate U}\\
    \Phi_{4}(\cY,\bU_1,\bU_2,\bU_3,\cG;\cD)&=\Psi(\cY,\bU_1,\bU_2,\bU_3,\cG)
    +\alpha_2^3\left\|\cG-\cD\right\|_{F}^{2},\label{surrogate G}
\end{align}
where $\omega(\cZ) = \exp\left(-\frac{(\cZ - \cX)^2}{2\gamma}\right)$, $\bU_i$ ($i=1,2,3$) are the factor matrices, $\cG$ is the core tensor, and $\alpha_1,\alpha_2 > 0$ are regularization parameters.

Finally, we recall two fundamental concepts that will be used in the convergence proof:

\begin{definition}[Quasi-convex function]
    A function $ g: \mathbb{R}^n \to \mathbb{R} $ is quasi-convex if for any $ \bx, \by \in \mathbb{R}^n $ and $ \theta \in [0, 1] $, the following holds:
    \begin{equation*}
        g(\theta \bx + (1 - \theta) \by) \leq \max\{g(\bx), g(\by)\}.
    \end{equation*}
\end{definition}

\begin{definition}[Directional derivative]
    Let $ g: \mathbb{R}^n \to \mathbb{R} $ be differentiable at $ \bx \in \mathbb{R}^n $. The directional derivative of $ g $ at $ \bx $ in the direction $ \bd \in \mathbb{R}^n $ is defined as
    $$g'(\bx; \bd) = \lim_{\theta \to 0} \frac{g(\bx + \theta \bd) - g(\bx)}{\theta}.$$
\end{definition}





\subsection{Detailed Proof of the Decreasing Property of $\Psi(\cY,\cL)$}\label{subsection:Surrogate Functions}

\subsubsection{Surrogate Function for Welsch's Function}

\begin{lemma}\label{lemma:inequality_Welsch}
Let $\psi(a)=2\gamma\bigl(1-\exp\bigl(-\tfrac{a^2}{2\gamma}\bigr)\bigr)$.  
The function $\varphi(a; b)$ is given by 
\begin{equation}\label{Welschsurrogate}
\varphi(a; b) = \psi(b) + \exp\left(-\frac{b^2}{2\gamma}\right)(a^2 - b^2).
\end{equation}
Then, $\varphi(a; b)$ satisfies the surrogate properties for $\psi(a)$, i.e.,
\begin{equation}\label{Welsch: inequality}
\psi(a) \leq \varphi(a; b) \quad \text{and} \quad \psi(a) = \varphi(a; a).
\end{equation}
\end{lemma}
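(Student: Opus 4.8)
The plan is to handle the two assertions in \eqref{Welsch: inequality} separately, and to reduce the nontrivial one to the elementary inequality $1+x\le e^{x}$.

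The equality $\psi(a)=\varphi(a;a)$ is immediate: substituting $b=a$ into \eqref{Welschsurrogate} makes the term $\exp(-b^2/(2\gamma))(a^2-b^2)$ vanish, so $\varphi(a;a)=\psi(a)$. Hence the work is entirely in establishing the upper bound $\psi(a)\le\varphi(a;b)$.

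For that bound, the structural reason it holds is that $\psi$ is a concave function of the squared argument $a^2$, and $\varphi(\cdot\,;b)$ is precisely its supporting tangent at $a^2=b^2$. I would make this rigorous by introducing $h(u)=2\gamma\bigl(1-\exp(-u/(2\gamma))\bigr)$ for $u\ge 0$, computing $h'(u)=\exp(-u/(2\gamma))$ and $h''(u)=-\tfrac{1}{2\gamma}\exp(-u/(2\gamma))<0$, concluding that $h$ is concave, and invoking the tangent-line inequality $h(u)\le h(v)+h'(v)(u-v)$ valid for all $u,v\ge 0$. Taking $u=a^2$ and $v=b^2$ and using $h(a^2)=\psi(a)$, $h(b^2)=\psi(b)$, and $h'(b^2)=\exp(-b^2/(2\gamma))$ then gives exactly $\psi(a)\le\psi(b)+\exp(-b^2/(2\gamma))(a^2-b^2)=\varphi(a;b)$.

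Equivalently, one can bypass concavity and keep the estimate fully self-contained: writing $r=(a^2-b^2)/(2\gamma)$, dividing the claimed inequality by $2\gamma$, cancelling the constant term, and then dividing through by the positive factor $\exp(-b^2/(2\gamma))$, the bound $\psi(a)\le\varphi(a;b)$ is seen to be equivalent to
\begin{equation*}
1-r\le e^{-r},
\end{equation*}
which is the standard inequality $1+x\le e^{x}$ at $x=-r$ and holds for all real $r$. I expect no genuine obstacle here; the only point that requires a little care is that $a$ and $b$ range over all of $\mathbb{R}$, so $a^2-b^2$ may have either sign, and one therefore needs $1+x\le e^{x}$ on all of $\mathbb{R}$ rather than merely for $x\le 0$.
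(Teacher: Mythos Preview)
Your proposal is correct and follows essentially the same approach as the paper: introduce the auxiliary function $h(u)=2\gamma(1-e^{-u/(2\gamma)})$ (the paper calls it $\psi_0$), observe that it is concave since $h''<0$, apply the tangent-line inequality, and substitute $u=a^2$, $v=b^2$. Your treatment is in fact slightly more careful, as you explicitly verify the equality case and offer the self-contained reduction to $1+x\le e^{x}$ as an alternative.
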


\begin{proof}
    Consider the function $\psi_0(x) =2\gamma(1 - \exp(-\frac{x}{2\gamma}))$, then $\psi'_0(x)>0$ and $\psi''_0(x)<0$. Therefore, for any $a,b$, we have $\psi_0(a)\le \psi_0(b)+\exp(-\frac{b}{2\gamma})(a - b)$. By replacing $a$ and $b$ with $a^2$ and $b^2$ respectively, the inequality is proved.
\end{proof}

\begin{remark}
    By applying Lemma \ref{lemma:inequality_Welsch}, we deduce that
    \[
    \Psi(\cY,\cL)\leq \Phi_0(\cY,\cL;\cZ)
    \]
    holds for all $\cZ$, with equality achieved specifically when $\cZ = \cY$:
    \begin{equation}
    \Psi(\cY,\cL)=\Phi_{0}(\cY,\cL;\cY). 
    \end{equation}
    Thus, $\Phi_0(\cY, \cL; \cZ)$, as defined in \eqref{surrogate Y}, serves as a surrogate function of $\Psi(\cY, \cL)$.
\end{remark}
 
\begin{lemma}\label{lemma: Welsch's surrogate}
Let $\Phi(\cY,\cL; \cW)$ and $\Psi(\cY,\cL)$ be defined in \eqref{DAobjfunW} and \eqref{objfunPsi} respectively, and $\cW_{t}, \cY_{t+1}$ be the sequences generated by the proposed algorithm.
Then we have: 
\[
\cY_{t+1}=\argmin_{\cY\in\bB}\Phi(\cY,\cL_{t}; \omega(\cY_t)),
\]
where $\bB=\{\cY:\|\cY\|_{\infty}\le a\}$ for some constant $a > 0$. It then follows that
\begin{equation}\label{decreaseY}
	\Psi(\cY_{t+1},\cL_{t})\leq \Psi(\cY_{t},\cL_{t}).
\end{equation}
\end{lemma}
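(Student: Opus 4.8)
The plan is to treat \eqref{decreaseY} as the standard one‑step descent guarantee of a majorization–minimization (MM) scheme, using the surrogate $\Phi_0$ from \eqref{surrogate Y} together with the two majorization properties recorded in the Remark after Lemma~\ref{lemma:inequality_Welsch}. The displayed characterization $\cY_{t+1}=\argmin_{\cY\in\bB}\Phi(\cY,\cL_t;\omega(\cY_t))$ is simply the algorithm's $\cY$-update written in variational form (a box‑constrained weighted least‑squares subproblem, whose unique minimizer is obtained by coordinatewise clipping of a weighted average of $\cL_t$ and $\cX$); I would just recall this. The real content is the inequality, and the key algebraic observation driving it is that $\Phi_0(\cY,\cL_t;\cY_t)$ and $\Phi(\cY,\cL_t;\omega(\cY_t))$ are the same optimization problem in $\cY$.

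First I would unfold the definition of $\widehat{\psi}(\cY;\cY_t)$ inside $\Phi_0$, obtaining
\[
\Phi_0(\cY,\cL_t;\cY_t)=\|\cL_t-\cY\|_F^2+\lambda\|\cY-\cX\|_{\omega(\cY_t)}^2+\lambda\bigl(\psi(\cY_t)-\|\cY_t-\cX\|_{\omega(\cY_t)}^2\bigr).
\]
The last parenthesised term depends only on $\cY_t$ and $\cX$, hence is a constant $c_t$ with respect to $\cY$; comparing with \eqref{DAobjfunW} gives $\Phi_0(\cY,\cL_t;\cY_t)=\Phi(\cY,\cL_t;\omega(\cY_t))+c_t$ for all $\cY$. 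Therefore the $\cY$-update satisfies $\cY_{t+1}=\argmin_{\cY\in\bB}\Phi(\cY,\cL_t;\omega(\cY_t))=\argmin_{\cY\in\bB}\Phi_0(\cY,\cL_t;\cY_t)$, and the minimizer is well defined and unique since $\Phi_0(\cdot,\cL_t;\cY_t)$ is a strictly convex quadratic in $\cY$ (the $\|\cL_t-\cY\|_F^2$ term contributes a positive‑definite Hessian, the weights $\omega(\cY_t)$ are positive) minimized over the nonempty compact convex box $\bB$.

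With this identification the inequality follows from the three defining properties of a valid surrogate. Using $\cY_t\in\bB$ — which holds for every $t\ge1$ because the update always selects its iterate inside $\bB$, and for $t=0$ by the initialization — I would chain
\begin{align*}
\Psi(\cY_{t+1},\cL_t)
&\le \Phi_0(\cY_{t+1},\cL_t;\cY_t) &&\text{(majorization; see the Remark after Lemma~\ref{lemma:inequality_Welsch})}\\
&\le \Phi_0(\cY_t,\cL_t;\cY_t) &&\text{($\cY_{t+1}$ minimizes $\Phi_0(\cdot,\cL_t;\cY_t)$ over $\bB$, and $\cY_t\in\bB$)}\\
&= \Psi(\cY_t,\cL_t) &&\text{(tightness at $\cZ=\cY_t$),}
\end{align*}
which is exactly \eqref{decreaseY}.

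I do not expect a genuine obstacle; the proof is essentially bookkeeping, and the two points needing care are minor. The first is correctly identifying the "constant" terms: $\psi(\cY_t)$ and $\|\cY_t-\cX\|_{\omega(\cY_t)}^2$ are constant in the $\cY$-minimization but still depend on the current iterate $\cY_t$, so they must not be discarded when these per‑iteration inequalities are later summed over $t$ in the global convergence argument. The second is the feasibility bookkeeping $\cY_t\in\bB$, which makes $\cY_t$ an admissible competitor against $\cY_{t+1}$; this is immediate from the form of the $\cY$-step. Everything else — existence and uniqueness of the clipped minimizer, and the majorization/tightness properties of $\Phi_0$ — is already provided by Lemma~\ref{lemma:inequality_Welsch} and the Remark that follows it.
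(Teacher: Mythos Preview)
Your proposal is correct and follows essentially the same approach as the paper: both identify that $\Phi_0(\cdot,\cL_t;\cY_t)$ and $\Phi(\cdot,\cL_t;\omega(\cY_t))$ differ only by a constant in $\cY$, then run the standard MM chain $\Psi(\cY_{t+1},\cL_t)\le\Phi_0(\cY_{t+1},\cL_t;\cY_t)\le\Phi_0(\cY_t,\cL_t;\cY_t)=\Psi(\cY_t,\cL_t)$ using the majorization and tightness properties from the Remark after Lemma~\ref{lemma:inequality_Welsch}. Your write-up is simply more explicit about the constant $c_t$, the feasibility $\cY_t\in\bB$, and uniqueness of the minimizer, all of which the paper leaves implicit.
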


\begin{proof}
    Since $\Phi(\cY_{t+1}, \cL_{t}; \cW_{t}) \leq \Phi(\cY_{t}, \cL_{t}; \cW_{t})$, it follows that $\Phi_0(\cY_{t+1},\cL_t;\cY_{t})\le \Phi_0(\cY_t,\cL_{t};\cY_{t})$. In addition, since $\Phi_0(\cY,\cL;\cY)$ is a surrogate function of $\Psi(\cY,\cL)$, we have
    \begin{align*}
        \Psi(\cY_{t+1},\cL_t)\le \Phi_0(\cY_{t+1},\cL_t;\cY_t) \le \Phi_0(\cY_{t},\cL_t;\cY_t)=\Psi(\cY_t,\cL_t).
    \end{align*}
\end{proof}

\subsubsection{Subproblems for Factor Matrices}


\begin{lemma}\label{lemma:surrogate U}
Let $\Psi(\cY,\cL)=\Psi(\cY,\bU_1,\bU_2,\bU_3,\cG)$ be defined in \eqref{objfunPsi}, and $\bU_{i,t+1}(i=1,2,3)$ be the sequences generated by the proposed algorithm. Then:
\begin{eqnarray*}
\bU_{1,t+1}&=&\argmin_{\bU_1}\Phi_1(\cY_{t+1},\bU_{1},\bU_{2,t},\bU_{3,t},\cG_t;\bU_{1,t}),\\
\bU_{2,t+1}&=&\argmin_{\bU_2}\Phi_2(\cY_{t+1},\bU_{1,t+1},\bU_{2},\bU_{3,t},\cG_t;\bU_{2,t}),\\
\bU_{3,t+1}&=&\argmin_{\bU_3}\Phi_3(\cY_{t+1},\bU_{1,t+1},\bU_{2,t+1},\bU_{3},\cG_t;\bU_{3,t})
\end{eqnarray*}
and
\begin{align*}
        \Psi(\cY_{t+1},\bU_{1,t+1},\bU_{2,t+1},\bU_{3,t+1},\cG_t)\leq \Psi(\cY_{t+1},\bU_{1,t},\bU_{2,t},\bU_{3,t},\cG_t).
\end{align*}
\end{lemma}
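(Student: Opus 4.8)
The plan is to argue exactly as in Lemma~\ref{lemma: Welsch's surrogate}, but now sequentially over the three factor blocks, exploiting that each $\Phi_i(\cY,\bU_1,\bU_2,\bU_3,\cG;\bC_i)$ defined in \eqref{surrogate U} is a valid surrogate of $\Psi$ with respect to the $\bU_i$-block. The two surrogate properties are immediate from the definition: since $\alpha_1\|\bU_i-\bC_i\|_F^2\ge 0$ we have the majorization $\Phi_i(\cY,\bU_1,\bU_2,\bU_3,\cG;\bC_i)\ge\Psi(\cY,\bU_1,\bU_2,\bU_3,\cG)$ for every $\bU_i$, and setting $\bC_i=\bU_i$ makes the penalty vanish, giving tightness $\Phi_i(\cY,\bU_1,\bU_2,\bU_3,\cG;\bU_i)=\Psi(\cY,\bU_1,\bU_2,\bU_3,\cG)$. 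I would also note in passing that the subproblems are well posed: with $\cY$ and the other two factors frozen, $\Psi$ is a convex quadratic in $\bU_i$ because $\cL=(\bU_1,\bU_2,\bU_3)\cdot\cG$ depends linearly on $\bU_i$, and adding $\alpha_1\|\bU_i-\bC_i\|_F^2$ with $\alpha_1>0$ makes it strongly convex, so the $\argmin$ is attained.

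First I would treat the $\bU_1$-update. By the defining property of $\bU_{1,t+1}$ as the minimizer of $\Phi_1(\cY_{t+1},\cdot,\bU_{2,t},\bU_{3,t},\cG_t;\bU_{1,t})$,
\[
\Phi_1(\cY_{t+1},\bU_{1,t+1},\bU_{2,t},\bU_{3,t},\cG_t;\bU_{1,t})\le \Phi_1(\cY_{t+1},\bU_{1,t},\bU_{2,t},\bU_{3,t},\cG_t;\bU_{1,t}).
\]
The majorization property bounds the left-hand side below by $\Psi(\cY_{t+1},\bU_{1,t+1},\bU_{2,t},\bU_{3,t},\cG_t)$, while the tightness property (the penalty vanishes because $\bC_1=\bU_{1,t}$) identifies the right-hand side with $\Psi(\cY_{t+1},\bU_{1,t},\bU_{2,t},\bU_{3,t},\cG_t)$. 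Hence $\Psi(\cY_{t+1},\bU_{1,t+1},\bU_{2,t},\bU_{3,t},\cG_t)\le \Psi(\cY_{t+1},\bU_{1,t},\bU_{2,t},\bU_{3,t},\cG_t)$.

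Then I would repeat the identical three-line argument for the $\bU_2$-update (with $\bU_1$ already advanced to $\bU_{1,t+1}$ and anchor $\bC_2=\bU_{2,t}$) and for the $\bU_3$-update (with $\bU_1,\bU_2$ advanced and $\bC_3=\bU_{3,t}$), obtaining
\[
\Psi(\cY_{t+1},\bU_{1,t+1},\bU_{2,t+1},\bU_{3,t},\cG_t)\le \Psi(\cY_{t+1},\bU_{1,t+1},\bU_{2,t},\bU_{3,t},\cG_t)
\]
and
\[
\Psi(\cY_{t+1},\bU_{1,t+1},\bU_{2,t+1},\bU_{3,t+1},\cG_t)\le \Psi(\cY_{t+1},\bU_{1,t+1},\bU_{2,t+1},\bU_{3,t},\cG_t).
\]
Chaining the three inequalities in order yields the claimed descent $\Psi(\cY_{t+1},\bU_{1,t+1},\bU_{2,t+1},\bU_{3,t+1},\cG_t)\le\Psi(\cY_{t+1},\bU_{1,t},\bU_{2,t},\bU_{3,t},\cG_t)$.

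There is no substantive obstacle here; the only points that need care are bookkeeping ones — ensuring that in each block update the surrogate anchor $\bC_i$ is the \emph{current} iterate of that block, so that tightness is invoked precisely where it holds, and that already-updated blocks are carried forward into the later subproblems, in keeping with the Gauss–Seidel ordering of the algorithm. The heavier, less routine arguments (quasi-convexity, directional-derivative stationarity, and the Block SUM machinery) play no role in this monotonicity lemma and are reserved for the final convergence theorem.
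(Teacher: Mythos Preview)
Your proposal is correct and follows essentially the same majorization--minimization argument as the paper: invoke tightness $\Phi_i(\cdot;\bU_{i,t})=\Psi(\cdot)$ at the old iterate, the minimizer property of $\bU_{i,t+1}$, and the upper-bound property $\Phi_i\ge\Psi$, then chain over $i=1,2,3$. The paper's write-up differs only cosmetically --- it chains the inequalities at the level of the $\Phi_i$'s (linking them via the consistency identities $\Phi_i(\ldots;\bU_{i,t})=\Phi_{i-1}(\ldots;\bU_{i-1,t+1})$) rather than converting to $\Psi$ after each block as you do, but the content is identical.
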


\begin{proof}
    First, for $\bU_3$:
    \begin{align*}
    \Phi_3(&\cY_{t+1}, \bU_{1,t+1}, \bU_{2,t+1}, \bU_{3,t+1}, \cG_t; \bU_{3,t+1}) \\
    &\leq \Phi_3(\cY_{t+1}, \bU_{1,t+1}, \bU_{2,t+1}, \bU_{3,t+1}, \cG_t; \bU_{3,t}) \\
    &\leq \Phi_3(\cY_{t+1}, \bU_{1,t+1}, \bU_{2,t+1}, \bU_{3,t}, \cG_t; \bU_{3,t}),
    \end{align*}
    where the first inequality uses the definition of the surrogate function, and the second follows from the minimization property of $\bU_{3,t+1}=\argmin_{\bU_3}\Phi_3(\cY_{t+1},\bU_{1,t+1},\bU_{2,t+1},\bU_{3},\cG_t;\bU_{3,t})$.
    Similarly, for $\bU_2$:
    \begin{align*}
    \Phi_2(&\cY_{t+1}, \bU_{1,t+1}, \bU_{2,t+1}, \bU_{3,t}, \cG_t; \bU_{2,t+1}) \\
    &\leq \Phi_2(\cY_{t+1}, \bU_{1,t+1}, \bU_{2,t+1}, \cG_t, \bU_{3,t}; \bU_{2,t}) \\
    &\leq \Phi_2(\cY_{t+1}, \bU_{1,t+1}, \bU_{2,t}, \bU_{3,t}, \cG_t; \bU_{2,t}),
    \end{align*}
    and for $\bU_1$:
    \begin{align*}
    \Phi_1(&\cY_{t+1}, \bU_{1,t+1}, \bU_{2,t}, \bU_{3,t}, \cG_t; \bU_{1,t+1}) \\
    &\leq \Phi_1(\cY_{t+1}, \bU_{1,t+1}, \bU_{2,t}, \bU_{3,t}, \cG_t; \bU_{1,t}) \\
    &\leq \Phi_1(\cY_{t+1}, \bU_{1,t}, \bU_{2,t}, \bU_{3,t}, \cG_t; \bU_{1,t}).
    \end{align*}
    Next, we use the consistency of the surrogate functions across updates:
    \begin{align*}
        \Phi_3(\cY_{t+1},\bU_{1,t+1},\bU_{2,t+1},\bU_{3,t},\cG_t;\bU_{3,t}) &=\Phi_2(\cY_{t+1},\bU_{1,t+1},\bU_{2,t+1},\bU_{3,t},\cG_t;\bU_{2,t+1}),\\
        \Phi_2(\cY_{t+1},\bU_{1,t+1},\bU_{2,t},\bU_{3,t},\cG_t;\bU_{2,t})&=\Phi_1(\cY_{t+1},\bU_{1,t+1},\bU_{2,t},\bU_{3,t},\cG_t;\bU_{1,t+1}),
    \end{align*}
    which follow from the definitions of $\Phi_i$ aligning with $\Psi$ at the current iterates. Combining these, we obtain:
    \begin{align*}
        \Psi&(\cY_{t+1},\bU_{1,t+1},\bU_{2,t+1},\bU_{3,t+1},\cG_t)\\
        &=\Phi_3(\cY_{t+1},\bU_{1,t+1},\bU_{2,t+1},\bU_{3,t+1},\cG_t;\bU_{3,t+1}) \\
        &\le \Phi_1(\cY_{t+1},\bU_{1,t},\bU_{2,t},\bU_{3,t},\cG_t;\bU_{1,t})
        \\
        &=\Psi(\cY_{t+1},\bU_{1,t},\bU_{2,t},\bU_{3,t},\cG_t).
    \end{align*}
    Thus, the inequality holds.
\end{proof}

\subsubsection{Subproblem for Core Tensor}

\begin{lemma}\label{lemma:surrogate G}
Let $\Psi(\cY,\cL)=\Psi(\cY,\bU_1,\bU_2,\bU_3,\cG)$ be defined in \eqref{objfunPsi}, and $\cG_{t+1}$ be the sequences generated by the proposed algorithm. 
Then we have:
\[
\cG_{t+1}=\argmin_{\cG}\Phi_4(\cY_{t+1},\bU_{1,t+1},\bU_{2,t+1},\bU_{3,t+1},\cG;\cG_t)
\]
and
\begin{align*}
        \Psi(\cY_{t+1},\bU_{1,t+1},\bU_{2,t+1},\bU_{3,t+1},\cG_{t+1})\leq \Psi(\cY_{t+1},\bU_{1,t+1},\bU_{2,t+1},\bU_{3,t+1},\cG_t).
    \end{align*}
\end{lemma}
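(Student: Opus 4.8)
The plan is to follow exactly the template already used in the proof of Lemma~\ref{lemma:surrogate U}, since $\Phi_4$ (see \eqref{surrogate G}) plays the same role for the core-tensor block that $\Phi_1,\Phi_2,\Phi_3$ play for the factor matrices. First I would record the two structural facts that make $\Phi_4$ a surrogate of $\Psi$ in the core-tensor variable. Fix $\cY=\cY_{t+1}$ together with the factor matrices $\bU_{1,t+1},\bU_{2,t+1},\bU_{3,t+1}$. Since $\cL=(\bU_{1,t+1},\bU_{2,t+1},\bU_{3,t+1})\cdot\cG$ is linear in $\cG$, the only $\cG$-dependent term of $\Psi$ is $\norm{\cL-\cY_{t+1}}_F^2$, a convex quadratic; hence $\Phi_4(\cdots,\cG;\cG_t)=\Psi(\cdots,\cG)+\alpha_2^3\norm{\cG-\cG_t}_F^2$ is strongly convex in $\cG$ and therefore has a unique minimizer, which is precisely the iterate $\cG_{t+1}$ produced by the algorithm's core-tensor update; this gives the first displayed claim of the lemma. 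Moreover, $\Psi(\cdots,\cG)\le\Phi_4(\cdots,\cG;\cD)$ for every $\cD$ because $\alpha_2^3\norm{\cG-\cD}_F^2\ge 0$, and $\Psi(\cdots,\cG)=\Phi_4(\cdots,\cG;\cG)$ because that same term vanishes when $\cD=\cG$.

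Given these two facts, the monotonicity inequality follows from a three-step chain identical in spirit to the one closing the proof of Lemma~\ref{lemma:surrogate U}: I apply the surrogate upper bound with $\cD=\cG_t$, then the minimization property of $\cG_{t+1}$, and finally the consistency identity $\Phi_4(\cdots,\cG;\cG)=\Psi(\cdots,\cG)$ at $\cG=\cG_t$. Concretely,
\[
\begin{aligned}
\Psi(\cY_{t+1},\bU_{1,t+1},\bU_{2,t+1},\bU_{3,t+1},\cG_{t+1})
&\le \Phi_4(\cY_{t+1},\bU_{1,t+1},\bU_{2,t+1},\bU_{3,t+1},\cG_{t+1};\cG_t)\\
&\le \Phi_4(\cY_{t+1},\bU_{1,t+1},\bU_{2,t+1},\bU_{3,t+1},\cG_t;\cG_t)\\
&= \Psi(\cY_{t+1},\bU_{1,t+1},\bU_{2,t+1},\bU_{3,t+1},\cG_t),
\end{aligned}
\]
which is exactly the asserted bound.

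I expect essentially no obstacle of mathematical substance: the whole argument rests on the non-negativity of a Frobenius-norm proximal term and on the argmin property of the update, both immediate. The one point I would treat with care is the identification in the first paragraph, namely that the algorithm's $\cG$-step is the \emph{exact} minimizer of $\Phi_4(\cdots;\cG_t)$ rather than an inexact or single-gradient step. Provided the implementation solves the strongly convex quadratic subproblem exactly (e.g.\ via the normal equations in the appropriate mode unfolding, exploiting the Kronecker structure of the multilinear map $(\bU_{1,t+1},\bU_{2,t+1},\bU_{3,t+1})\cdot$), the identification is valid and the lemma follows verbatim.
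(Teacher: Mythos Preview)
Your proposal is correct and follows essentially the same approach as the paper: the paper's proof is precisely the chain $\Psi=\Phi_4(\cdots;\cG_{t+1})\le\Phi_4(\cdots,\cG_{t+1};\cG_t)\le\Phi_4(\cdots,\cG_t;\cG_t)=\Psi$, which your three-step chain reproduces (you merely fold the first equality and first inequality into a single surrogate-upper-bound step). Your additional remarks on strong convexity and the exactness of the $\cG$-update are more explicit than the paper's treatment but fully consistent with it.
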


\begin{proof}
    The proof mirrors the approach in Lemma \ref{lemma:surrogate U}, relying on the surrogate function $\Phi_4$ to establish the decrease of $\Psi$. Specifically, consider the following chain of relationships:
    \begin{align*}
        \Psi&(\cY_{t+1},\bU_{1,t+1},\bU_{2,t+1},\bU_{3,t+1},\cG_{t+1})\\
        &=\Phi_4(\cY_{t+1},\bU_{1,t+1},\bU_{2,t+1},\bU_{3,t+1},\cG_{t+1};\cG_{t+1})\\
        &\le \Phi_4(\cY_{t+1},\bU_{1,t+1},\bU_{2,t+1},\bU_{3,t+1},\cG_{t+1};\cG_{t})\\
        &\le \Phi_4(\cY_{t+1},\bU_{1,t+1},\bU_{2,t+1},\bU_{3,t+1},\cG_{t};\cG_{t})\\
        &= \Psi(\cY_{t+1},\bU_{1,t+1},\bU_{2,t+1},\bU_{3,t+1},\cG_t).
    \end{align*}
\end{proof}

By combining Lemmas \ref{lemma: Welsch's surrogate}, \ref{lemma:surrogate U} and \ref{lemma:surrogate G}, it can be concluded that the objective function $\Psi(\cY,\cL)$ is decreasing:

\begin{lemma}
    Let $\Psi(\cY,\cL)$ be defined in \eqref{objfunPsi}, and $\cY_{t+1},\cL_{t+1}$ be the sequences generated by the proposed algorithm. Then we have
    \begin{align*}
        \Psi(\cY_{t+1},\cL_{t+1})\leq \Psi(\cY_{t},\cL_{t}).
    \end{align*}
\end{lemma}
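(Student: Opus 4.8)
The plan is to obtain the claimed inequality by concatenating the three one-block decrease results already established, namely Lemma~\ref{lemma: Welsch's surrogate} (the $\cY$-update), Lemma~\ref{lemma:surrogate U} (the $\bU_1,\bU_2,\bU_3$-updates), and Lemma~\ref{lemma:surrogate G} (the $\cG$-update). The only conceptual point to keep in mind is the bookkeeping identification $\cL_t = (\bU_{1,t},\bU_{2,t},\bU_{3,t})\cdot\cG_t$, which lets us freely switch between the compact notation $\Psi(\cY,\cL)$ and the expanded notation $\Psi(\cY,\bU_1,\bU_2,\bU_3,\cG)$; in particular $\Psi(\cY_{t+1},\cL_t)=\Psi(\cY_{t+1},\bU_{1,t},\bU_{2,t},\bU_{3,t},\cG_t)$ and $\Psi(\cY_{t+1},\cL_{t+1})=\Psi(\cY_{t+1},\bU_{1,t+1},\bU_{2,t+1},\bU_{3,t+1},\cG_{t+1})$.

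Concretely, I would write the single display
\begin{align*}
\Psi(\cY_{t+1},\cL_{t+1})
&=\Psi(\cY_{t+1},\bU_{1,t+1},\bU_{2,t+1},\bU_{3,t+1},\cG_{t+1})\\
&\le \Psi(\cY_{t+1},\bU_{1,t+1},\bU_{2,t+1},\bU_{3,t+1},\cG_{t})\\
&\le \Psi(\cY_{t+1},\bU_{1,t},\bU_{2,t},\bU_{3,t},\cG_{t})\\
&=\Psi(\cY_{t+1},\cL_{t})\\
&\le \Psi(\cY_{t},\cL_{t}),
\end{align*}
justifying the first inequality by Lemma~\ref{lemma:surrogate G}, the second by Lemma~\ref{lemma:surrogate U}, and the last by Lemma~\ref{lemma: Welsch's surrogate}; the two equalities are just the notational identification above. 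This chain requires no new estimates, since each cited lemma already supplies the corresponding step together with the fact that the block update in question leaves the other blocks fixed.

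Honestly there is no real obstacle here: the lemma is a corollary obtained by telescoping the per-block monotonicity. The only thing to be careful about is the order of the blocks — the $\cY$-step is applied first (holding $\cL_t$ fixed), then the factor matrices are updated at the new $\cY_{t+1}$ but with $\cG_t$ still fixed, and finally $\cG$ is updated at $\cY_{t+1}$ and $\bU_{i,t+1}$ — so that the intermediate iterates appearing in the chain match exactly those in Lemmas~\ref{lemma: Welsch's surrogate}--\ref{lemma:surrogate G}. Once the displays are aligned with that update sequence, the inequality $\Psi(\cY_{t+1},\cL_{t+1})\le\Psi(\cY_{t},\cL_{t})$ follows immediately, which completes the proof.
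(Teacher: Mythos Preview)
Your proposal is correct and matches the paper's own approach: the paper simply states that the inequality follows by combining Lemmas~\ref{lemma: Welsch's surrogate}, \ref{lemma:surrogate U}, and \ref{lemma:surrogate G}, and your telescoping chain is precisely the explicit version of that combination. The ordering of the block updates and the notational identification $\cL_t=(\bU_{1,t},\bU_{2,t},\bU_{3,t})\cdot\cG_t$ are handled correctly.
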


\subsection{Block Successive Upper-bound Minimization}

To establish the global convergence of the proposed algorithm, we leverage the Block SUM method, which provides a framework for analyzing iterative optimization with interdependent variables. Below, we adapt this method to our problem by aligning the updates of $\cY$, $\bU_i$, and $\cG$ with its block-wise structure. Before proving the convergence, we first introduce the Block SUM method.

Suppose that the feasible set $\bOmega$ is the cartesian product of $n$ closed convex sets: $\bOmega=\bOmega_{1}\times \cdots\times\bOmega_{n}$, with $\bOmega_{i}\subset \mathbb{R}^{m_i}$ and $\sum_{i}m_{i}=m$.  Accordingly, the optimization variable $\bx\in\mathbb{R}^{m}$ can be decomposed as $\bx=(\bx_1,\bx_2,\ldots,\bx_n)$, with $\bx_i\in \bOmega_i,i=1,\ldots,n$. In our problem, we treat $\cY$, $\bU_1$, $\bU_2$, $\bU_3$, and $\cG$ as separate blocks, corresponding to $\bx_i$ in the Block SUM framework. Block SUM considers the following problem:
\begin{equation}\label{BSUM problem}
\begin{aligned}
\min_{\bx_1,\bx_2,\ldots,\bx_n} &\quad \Psi(\bx_1,\bx_2,\ldots,\bx_n) \\ 
\text{s.t.} &\quad \bx_{i}\in \bOmega_i,\quad i=1,2,\ldots,n.
\end{aligned}
\end{equation}
When $\Psi(\bx)$ is non-smooth, non-convex, or exhibits strong coupling between blocks, directly solving problem \eqref{BSUM problem} becomes challenging. To address this, BSUM sequentially optimizes a series of surrogate functions that approximate $g$ while possessing more favorable properties. Specifically, at iteration $t+1$, BSUM updates the blocks cyclically (i.e., $i = 1, 2, \ldots, n$), where the $i$-th block $\bx_i$ is obtained by solving the subproblem:
\begin{equation}\label{BSUM subproblem}
\begin{aligned}
\min_{\bx_i} &\quad \Phi_{i}(\bx_1^{t+1},\ldots,\bx_{i-1}^{t+1},\bx_i,\bx_{i+1}^{t},\ldots,\bx_n^t;\by_{i}) \\ 
\text{s.t.} &\quad \bx_i\in \bOmega_{i},
\end{aligned}
\end{equation}
with $\by_i$ being an auxiliary variable for the $i$-th block, typically set to $\by_i = \bx_i^t$, the value of the $i$-th block from the previous iteration. We refer to $\Phi_i(\bx;\by_i)$ as a surrogate function for $\Psi(\bx)$, which must satisfy the following assumptions to ensure convergence:

\begin{assumption}[Assumption 2 of \cite{razaviyayn2013unified}]\label{Block MM assumption}

Suppose that for all $i=1,\ldots,n$, $\bx_i,\by_i\in \bOmega_i$, and $\bx=(\bx_1,\bx_2,\ldots,\bx_n)\in\mathbb{R}^{m}$. Then each surrogate function $\Phi_{i}(\bx;\by_i)$ satisfies the following assumptions:
\begin{subequations} 
\begin{align}
& \Phi_i\left(\bx;\bx_i\right)=\Psi(\bx),\label{assumption1} \\
& \Phi_i\left(\bx;\by_i\right) \geq \Psi\left(\bx\right),  \label{assumption2} \\
& \Phi_i'(\bx; \by_i; \bd_i)\big|_{\by_i=\bx_i} = \Psi'(\bx;\bd),\label{assumption3} \\
& \Phi_i\left(\bx;\by_i\right) \text{ is continuous in }(\bx,\by_i).\label{assumption4}
\end{align}
\end{subequations}
Here, $\bd \in \mathbb{R}^m$ is a direction vector with $\bd_i \in \mathbb{R}^{m_i}$ as its $i$-th block and all other blocks zero, i.e., $\bd=(0,\ldots,\bd_i,\ldots,0)$. $\Phi_i'(\bx; \by_i; \bd)$ is the directional derivative of $\Phi_i$ with respect to its first argument $\bx$ along direction $\bd_i$ (i.e., the directional derivative with respect to $\bx_i$), and $\Psi'(\bx; \bd)$ is the directional derivative of $\Psi$ with respect to $\bx$ along direction $\bd$.
\end{assumption}

Based on the definitions of the surrogate functions \eqref{surrogate Y}, \eqref{surrogate U} and \eqref{surrogate G}, we can draw the following conclusion:

\begin{lemma}\label{lemma:convergence1}
    The surrogate functions \eqref{surrogate Y}, \eqref{surrogate U} and \eqref{surrogate G} satisfy assumptions \eqref{assumption1}-\eqref{assumption4}.
\end{lemma}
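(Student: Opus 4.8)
The plan is to verify each of the four properties \eqref{assumption1}--\eqref{assumption4} for all five surrogate functions $\Phi_0,\Phi_1,\Phi_2,\Phi_3,\Phi_4$ in turn. The work splits naturally into two cases: $\Phi_0$, which handles the $\cY$-block and encapsulates the Welsch surrogate; and $\Phi_1,\dots,\Phi_4$, which handle the factor matrices $\bU_i$ and the core tensor $\cG$ and are all of the same proximal form $\Psi + (\text{const})\|\,\cdot\, - (\text{reference})\|_F^2$.

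For $\Phi_0(\cY,\cL;\cZ) = \|\cL-\cY\|_F^2 + \lambda\widehat\psi(\cY;\cZ)$: property \eqref{assumption1} (tightness at $\cZ=\cY$) and \eqref{assumption2} (global upper bound) are exactly the content of Lemma \ref{lemma:inequality_Welsch} and the Remark following it, applied coordinatewise, so I would just cite them. For \eqref{assumption4}, continuity in $(\cY,\cZ)$ jointly is immediate since $\widehat\psi(\cY;\cZ) = \psi(\cZ) + \|\cY-\cX\|_{\omega(\cZ)}^2 - \|\cZ-\cX\|_{\omega(\cZ)}^2$ is a composition of exponentials, polynomials, and sums. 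For the first-order consistency \eqref{assumption3}, I would differentiate $\Phi_0$ with respect to $\cY$ and evaluate at $\cZ=\cY$: the gradient of $\lambda\widehat\psi(\cY;\cZ)$ in $\cY$ is $2\lambda\,\omega(\cZ)\odot(\cY-\cX)$, which at $\cZ=\cY$ equals $2\lambda\exp(-\tfrac{(\cY-\cX)^2}{2\gamma})\odot(\cY-\cX) = \nabla_{\cY}(\lambda\psi(\cY))$ by the chain rule; adding the common smooth term $\|\cL-\cY\|_F^2$ shows the $\cY$-directional derivatives of $\Phi_0$ and $\Psi$ agree, and since both functions are differentiable the directional derivative is just the inner product of this gradient with $\bd$.

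For $\Phi_i$, $i=1,\dots,4$: each has the form $\Phi_i(\bx;\by_i) = \Psi(\bx) + c_i\|\bx_i - \by_i\|_F^2$ with $c_i = \alpha_1$ (for $i=1,2,3$, $\bx_i = \bU_i$) or $c_i = \alpha_2^3$ (for $i=4$, $\bx_i=\cG$). Property \eqref{assumption1} holds because the penalty vanishes when $\by_i = \bx_i$; property \eqref{assumption2} holds because the penalty $c_i\|\bx_i-\by_i\|_F^2 \ge 0$; property \eqref{assumption4} holds because $\Psi$ is continuous (polynomial in the $\bU$'s and $\cG$ through $\cL = (\bU_1,\bU_2,\bU_3)\cdot\cG$, plus the smooth Welsch term in $\cY$) and the penalty is a polynomial in $(\bx_i,\by_i)$; and property \eqref{assumption3} holds because the added penalty $c_i\|\bx_i-\by_i\|_F^2$ has zero gradient in $\bx_i$ at $\bx_i=\by_i$, so $\nabla_{\bx_i}\Phi_i(\bx;\by_i)\big|_{\by_i=\bx_i} = \nabla_{\bx_i}\Psi(\bx)$, and again differentiability of both sides turns this into equality of directional derivatives along $\bd=(0,\dots,\bd_i,\dots,0)$.

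The only genuine subtlety — and the step I expect to need the most care — is \eqref{assumption3} for $\Phi_0$, because $\psi$ is nonconvex and one must confirm that the \emph{linearization} implicit in the quadratic surrogate is a genuine first-order (not merely zeroth-order) match; this is where the specific choice of weight $\omega(\cZ) = \exp(-\tfrac{(\cZ-\cX)^2}{2\gamma})$ matters, since it is exactly $\tfrac{1}{2a}\psi'(a)\big|_{a = \cZ-\cX}$ divided by $(\cZ-\cX)$, i.e. the derivative of the inner function $\psi_0$ evaluated at the right point. I would write this out coordinatewise using the scalar identity from Lemma \ref{lemma:inequality_Welsch}'s proof. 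A secondary point worth a sentence is that all directional derivatives here are honest two-sided limits because every surrogate is differentiable in its first argument (no nonsmoothness is introduced), so \eqref{assumption3} reduces to equality of gradients, which is what the computations above deliver.
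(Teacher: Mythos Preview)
Your proposal is correct and follows essentially the same approach as the paper: the paper also dispatches \eqref{assumption1}, \eqref{assumption2}, \eqref{assumption4} as immediate, computes $\nabla_{\cY}\Phi_0(\cY,\cL;\cZ)\big|_{\cZ=\cY}=\nabla_{\cY}\Psi(\cY,\cL)$ to verify \eqref{assumption3} for $\Phi_0$, and then handles $\Phi_1,\ldots,\Phi_4$ with a one-line ``similarly.'' Your write-up is in fact more explicit than the paper's (you spell out the proximal-penalty argument for $\Phi_1,\ldots,\Phi_4$ and the reason directional derivatives reduce to gradients), which is fine.
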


\begin{proof}
    Assumptions \eqref{assumption1} and \eqref{assumption2} can be easily verified. For assumption \eqref{assumption3}, since $\Phi_0(\cY, \cL; \cZ)$ and $\Psi(\cY, \cL)$ are both differentiable with respect to $\cY$, it suffices to show that their gradients with respect to $\cY$ are equal at $\cZ = \cY$. We compute:
    $$\begin{aligned}
        \nabla_{\cY}\Phi_0(\cY,\cL;\cZ)\big|_{\cZ=\cY}
        &=2(\cY-\cL)+\lambda\nabla_{\cY}\left(\|\cY-\cX\|_{\omega(\cZ)}^2\right)\big|_{\cZ=\cY} \\
        &=\nabla_{\cY}\Psi(\cY,\cL).
    \end{aligned}$$
    Therefore, $\Phi_0(\cY,\cL;\cZ)$ satisfies assumption \eqref{assumption3}.
    Moreover, it is evident that $\Phi_0(\cY,\cL;\cZ)$ is also continuous, thus it satisfies assumptions \eqref{assumption1}-\eqref{assumption4}. Similarly, it can be verified that each $\Phi_i$ satisfies assumptions \eqref{assumption1}-\eqref{assumption4}, for $i=1,\ldots,4$.
\end{proof}

\begin{lemma}[Theorem 2 of \cite{razaviyayn2013unified}]\label{theorem: convergence}
Suppose $\bOmega$ is convex. Under assumption \ref{Block MM assumption} (for simplicity assume that $\Psi$ is continuously differentiable):
\begin{itemize}
\item[(a)] If $\Phi_i\left(\bx; \by_{i}\right)$ is quasi-convex in $\bx_i$ and each subproblem \eqref{BSUM subproblem} has a unique solution, then every limit point of the sequence $\{\bx^{t+1}\}$ generated by the $t+1$-th iteration of the block SUM is a coordinatewise minimum of \eqref{BSUM problem}.
\item[(b)] If the level set $\bOmega^0=\left\{\bx \mid \Psi(\bx) \leq \Psi\left(\bx^0\right)\right\}$ is compact, each subproblem \eqref{BSUM subproblem} has a unique solution for at least $n-1$ blocks, then 
$$\lim _{t \rightarrow \infty} d\left(\bx^{t+1}, \bOmega^{*}\right)=\lim _{t \rightarrow \infty}\inf_{\bx^* \in \bOmega^{*}}\|\bx^{t+1}-\bx^*\|=0,$$
where $\bOmega^{*}$ is the set of stationary points.
\end{itemize}
\end{lemma}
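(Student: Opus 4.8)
The plan is to reproduce the argument of \cite{razaviyayn2013unified} in our notation; since the statement is verbatim Theorem~2 of that reference, the ``proof'' is really a sketch of that reasoning, which I would organize in four steps.

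\emph{Step 1 (monotone decrease).} I would first observe that, by Assumptions~\eqref{assumption1}--\eqref{assumption2} together with the block updates \eqref{BSUM subproblem}, no single-block step can increase $\Psi$, so $\{\Psi(\bx^t)\}$ is non-increasing — this is precisely the content of Lemmas~\ref{lemma: Welsch's surrogate}, \ref{lemma:surrogate U} and \ref{lemma:surrogate G} specialized to the five blocks $\cY,\bU_1,\bU_2,\bU_3,\cG$. In case~(b), compactness of $\bOmega^0$ bounds $\Psi$ from below on the iterates, so $\Psi(\bx^t)\downarrow\Psi^*$; in case~(a) I would take the existence of a limit point (hence of such a $\Psi^*$) as given. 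A consequence is that all successive within-iteration one-block decrements tend to $0$.

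\emph{Step 2 (limit points solve the first-block subproblem).} Fix a limit point $\bx^*$ with $\bx^{r_j}\to\bx^*$ and, passing to a further subsequence via compactness of $\bOmega^0$ (or of $\bB$ and the factor/core blocks in the concrete setting), assume also $\bx_1^{r_j+1}\to z_1$. Letting $j\to\infty$ in the optimality inequality $\Phi_1(\bx_1^{r_j+1},\bx_2^{r_j},\ldots;\bx_1^{r_j})\le\Phi_1(\bx_1,\bx_2^{r_j},\ldots;\bx_1^{r_j})$ and invoking continuity \eqref{assumption4} shows $z_1$ minimizes $\Phi_1(\cdot,\bx_2^*,\ldots,\bx_n^*;\bx_1^*)$ over $\bOmega_1$. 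Squeezing $\Psi(\bx^{r_j+1})\le\Psi(\bx_1^{r_j+1},\bx_2^{r_j},\ldots)\le\Psi(\bx^{r_j})$, both converging to $\Psi^*$, gives $\Psi(z_1,\bx_2^*,\ldots)=\Psi^*=\Psi(\bx^*)$; then $\Psi\le\Phi_1$ with \eqref{assumption1} forces $\Phi_1(z_1,\bx_2^*,\ldots;\bx_1^*)=\Psi(\bx^*)=\Phi_1(\bx^*;\bx_1^*)$, so $\bx_1^*$ is itself a minimizer of that subproblem. For part~(a), uniqueness of every subproblem solution yields $z_1=\bx_1^*$, hence $\bx_1^{r_j+1}\to\bx_1^*$; iterating the same argument through blocks $2,\dots,n$ gives $\bx^{r_j+1}\to\bx^*$ and that $\bx_i^*$ minimizes $\Phi_i(\bx_1^*,\dots,\bx_i,\dots,\bx_n^*;\bx_i^*)$ for each $i$. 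First-order optimality for each of these plus the directional-derivative matching \eqref{assumption3} gives $\Psi'(\bx^*;\bd)\ge0$ for every feasible coordinate direction $\bd=(0,\dots,\bd_i,\dots,0)$, and quasi-convexity of $\Phi_i(\cdot;\bx_i^*)$ promotes this to $\bx_i^*$ being a minimizer of $\Psi$ along block $i$, i.e. $\bx^*$ is a coordinatewise minimum.

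\emph{Step 3 (part (b)) and the main obstacle.} Since uniqueness is now assumed on only $n-1$ blocks, I would argue by contradiction: if $\dist(\bx^{t+1},\bOmega^*)\not\to0$, pick a subsequence with $\dist(\bx^{r_j+1},\bOmega^*)\ge\varepsilon$ and, using compactness of $\bOmega^0$, extract $\bx^{r_j}\to\bx^*$ and $\bx^{r_j+1}\to\bar\bx$; running Step~2, the $n-1$ unique blocks force $\bar\bx$ to agree with $\bx^*$ on those coordinates, while the objective-value squeeze still certifies the remaining coordinate of both $\bx^*$ and $\bar\bx$ as a minimizer of its subproblem, so via \eqref{assumption3} the point $\bx^*$ (and likewise $\bar\bx$) is a stationary point of \eqref{BSUM problem}, contradicting $\dist(\bar\bx,\bOmega^*)\ge\varepsilon$. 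The hard part throughout is the passage in Step~2: getting the shifted subsequence $\{\bx^{r_j+1}\}$ to converge to the \emph{same} point $\bx^*$ so that all $n$ per-block optimality conditions can be evaluated at one point — this is exactly where uniqueness of the subproblem solutions and the continuity assumption \eqref{assumption4} are indispensable — and in part~(b) the single non-unique block is delicate, because it may genuinely move, so its limiting value must be recognized as a subproblem minimizer through the value squeeze rather than through convergence of the iterates.
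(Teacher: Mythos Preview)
The paper does not give its own proof of this lemma: it is quoted verbatim as Theorem~2 of \cite{razaviyayn2013unified} and invoked as a black box, with the surrounding Lemmas~\ref{lemma:convergence1}--\ref{lemma:convergence3} only verifying its hypotheses. Your proposal therefore cannot be compared against a proof in the paper; what you have written is a faithful sketch of the original argument from \cite{razaviyayn2013unified}, and the four steps you outline --- monotone descent, passing to limits in the per-block optimality inequalities via continuity \eqref{assumption4}, the ``shifted-subsequence'' bootstrap using uniqueness, and the value-squeeze to handle the non-unique block in part~(b) --- are the correct ingredients, with the difficulty you flag (synchronizing the limits of $\{\bx^{r_j}\}$ and $\{\bx^{r_j+1}\}$) indeed being the crux.

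One minor point worth tightening: in Step~2 your appeal to quasi-convexity at the end is a bit loose. The way \cite{razaviyayn2013unified} actually uses quasi-convexity of $\Phi_i(\cdot;\by_i)$ in $\bx_i$ is not to ``promote'' a stationary point to a block minimizer after the fact, but earlier, to guarantee that the level sets of each surrogate in the active block are convex so that the limiting optimality argument goes through cleanly; the conclusion that $\bx^*$ is a coordinatewise minimum of $\Psi$ then follows from $\Psi(\bx^*)=\Phi_i(\bx^*;\bx_i^*)\le\Phi_i(\ldots,\bx_i,\ldots;\bx_i^*)$ combined with the upper-bound property \eqref{assumption2}, not from \eqref{assumption3}. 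As written your sketch conflates the roles of \eqref{assumption2} and \eqref{assumption3} slightly, but the overall structure is sound.
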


\subsection{Convergence}\label{subsection:convergence}

Since the surrogate function \eqref{surrogate Y} is a quadratic function with respect to $\cY$, it is strongly convex. Similarly, surrogate functions \eqref{surrogate U} and \eqref{surrogate G} are also strongly convex. Therefore, we have the following conclusion:

\begin{lemma}\label{lemma:convergence2}
Each subproblem in the proposed model has a unique solution.
\end{lemma}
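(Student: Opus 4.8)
The plan is to observe that each subproblem minimizes a strongly convex quadratic function over a nonempty closed convex set, and then to invoke the standard fact that such a problem has a unique minimizer. The real work is to exhibit, block by block, a uniform positive-definite lower bound on the Hessian of the relevant surrogate.

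First I would treat the $\cY$-update. Since $\psi(\cY_t)$ and $\|\cY_t-\cX\|_{\omega(\cY_t)}^2$ are constants in $\cY$, the map $\cY\mapsto\Phi_0(\cY,\cL_t;\cY_t)$ coincides, up to an additive $\cY$-independent term, with the map $\Phi(\cY,\cL_t;\omega(\cY_t))$ minimized in Lemma~\ref{lemma: Welsch's surrogate}, i.e.\ with $\|\cL_t-\cY\|_F^2+\lambda\|\cY-\cX\|_{\omega(\cY_t)}^2$. In vectorized form its Hessian is $2\Ib+2\lambda\diag\bigl(\vc(\omega(\cY_t))\bigr)$, and since $\omega(\cY_t)_{ijk}=\exp\bigl(-(\cY_{t,ijk}-\cX_{ijk})^2/2\gamma\bigr)>0$ this Hessian is $\succeq 2\Ib$. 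Hence the objective is $2$-strongly convex, and as $\bB=\{\cY:\|\cY\|_\infty\le a\}$ is nonempty, closed and convex, the $\cY$-subproblem has a unique solution.

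Next I would handle the factor-matrix and core-tensor updates together. With all blocks other than $\bU_i$ frozen at their current iterates, $\Psi$ depends on $\bU_i$ only through $\|(\bU_1,\bU_2,\bU_3)\cdot\cG-\cY\|_F^2$, which in the mode-$i$ unfolding equals $\|\bU_i\Mb-\Matricize{i}{\cY}\|_F^2$ for a fixed matrix $\Mb$ assembled from $\cG$ and the two other factors; as a quadratic in $\bU_i$ this has a positive semidefinite Hessian. The surrogate $\Phi_i$ adds $\alpha_1\|\bU_i-\bC_i\|_F^2$, which contributes $2\alpha_1\Ib$ to the Hessian, so the total Hessian is $\succeq 2\alpha_1\Ib\succ 0$; thus $\Phi_i$ is strongly convex in $\bU_i$ on the whole space and the subproblem has a unique minimizer. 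The $\cG$-update is the same argument: with the factors frozen, $\Psi$ is, up to a constant, the convex quadratic $\|(\bU_3\otimes\bU_2\otimes\bU_1)\vc(\cG)-\vc(\cY)\|^2$ with positive semidefinite Hessian, and the term $\alpha_2^3\|\cG-\cD\|_F^2$ supplies the lower bound $2\alpha_2^3\Ib\succ 0$ (as $\alpha_2>0$), again giving strong convexity and uniqueness. Combining the three cases proves the lemma.

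The only point that is not pure bookkeeping is that the data-fitting term $\|(\bU_1,\bU_2,\bU_3)\cdot\cG-\cY\|_F^2$ is generally \emph{not} strongly convex in $\bU_i$ or in $\cG$: the associated linear maps can be rank deficient (for instance when a factor matrix or a slice of $\cG$ degenerates), so uniqueness would genuinely fail for a plain cyclic block-coordinate minimization of $\Psi$ itself. It is precisely the proximal-type regularizers $\alpha_1\|\bU_i-\bC_i\|_F^2$ and $\alpha_2^3\|\cG-\cD\|_F^2$ with strictly positive parameters that restore a uniform curvature bound; hence the argument must retain these terms and check that what remains is at least convex, so that the sum is positive definite.
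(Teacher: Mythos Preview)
Your proposal is correct and follows the same strong-convexity argument the paper uses: the paper simply asserts (in the sentence preceding the lemma) that each surrogate is a quadratic and hence strongly convex in its respective block, from which uniqueness follows. You have supplied the explicit Hessian bounds that justify this assertion---in particular, that the data-fitting term alone is only convex in $\bU_i$ and $\cG$, with the proximal terms $\alpha_1\|\bU_i-\bC_i\|_F^2$ and $\alpha_2^3\|\cG-\cD\|_F^2$ providing the strict curvature---which is exactly the content the paper leaves implicit.
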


\begin{lemma}\label{lemma:convergence3}
The level set $\bOmega^0=\{(\cY,\cL)\mid \Psi(\cY,\cL)\le \Psi(\cY_0,\cL_0)\}$ is compact, where $\Psi(\cY,\cL)$ is defined in \eqref{objfunPsi}.
\end{lemma}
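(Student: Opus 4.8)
The plan is to verify that $\bOmega^0$ is closed and bounded, and then invoke the Heine--Borel theorem in the finite‑dimensional space in which $(\cY,\cL)$ lives. The key structural observation is that the Welsch penalty $\psi(\cY)=2\gamma\sum_{ijk}\bigl(1-\exp(-(\cY_{ijk}-\cX_{ijk})^2/2\gamma)\bigr)$ is \emph{bounded}: each summand lies in $[0,1]$, so $0\le\psi(\cY)\le 2\gamma d$, where $d$ denotes the number of entries of $\cX$. Hence $\psi$ by itself supplies no coercivity in $\cY$, and the compactness argument must draw $\cY$'s boundedness from the box $\bB=\{\cY:\|\cY\|_\infty\le a\}$ that the algorithm enforces; accordingly $\bOmega^0$ is understood as $\{(\cY,\cL): \cY\in\bB,\ \Psi(\cY,\cL)\le\Psi(\cY_0,\cL_0)\}$, consistent with $\cY_{t+1}=\argmin_{\cY\in\bB}\Phi(\cdot)$ in Lemma~\ref{lemma: Welsch's surrogate}.

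First I would bound $\cY$: from $\|\cY\|_\infty\le a$ one gets immediately $\|\cY\|_F\le\sqrt{d}\,a$. Next I would bound $\cL$: since $\psi\ge 0$ and $\Psi(\cY,\cL)=\|\cL-\cY\|_F^2+\lambda\psi(\cY)$, for any $(\cY,\cL)\in\bOmega^0$ we have $\|\cL-\cY\|_F^2=\Psi(\cY,\cL)-\lambda\psi(\cY)\le\Psi(\cY_0,\cL_0)$, whence by the triangle inequality $\|\cL\|_F\le\|\cY\|_F+\|\cL-\cY\|_F\le\sqrt{d}\,a+\sqrt{\Psi(\cY_0,\cL_0)}$. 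This shows $\bOmega^0$ is bounded.

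Then I would argue closedness: $\psi$ is a finite sum of compositions of continuous functions, hence continuous, so $\Psi$ is continuous; therefore $\{(\cY,\cL):\Psi(\cY,\cL)\le\Psi(\cY_0,\cL_0)\}$ is closed as the preimage of the closed half‑line $(-\infty,\Psi(\cY_0,\cL_0)]$, and intersecting it with the closed box $\bB$ in the $\cY$ coordinate preserves closedness. Being closed and bounded in a finite‑dimensional Euclidean space, $\bOmega^0$ is compact, which completes the proof.

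The only non‑routine point is the one flagged above: recognizing that boundedness of $\cY$ cannot come from the objective (the Welsch term saturates) but must come from $\bB$. It is also worth noting why the statement is phrased in the reduced variables $(\cY,\cL)$ rather than in $(\cY,\bU_1,\bU_2,\bU_3,\cG)$: the Tucker parametrization $\cL=(\bU_1,\bU_2,\bU_3)\cdot\cG$ has a scaling ambiguity ($\bU_1\mapsto c\bU_1$, $\cG\mapsto c^{-1}\cG$ leaves $\cL$, and hence $\Psi$, unchanged), so a level set in those coordinates would be unbounded; the compactness needed to apply Lemma~\ref{theorem: convergence}(b) holds precisely in the $(\cY,\cL)$ description, after which everything reduces to the short estimates above.
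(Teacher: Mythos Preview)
Your proof is correct and follows essentially the same route as the paper's: bound $\psi$ (hence bound $\|\cL-\cY\|_F^2$ on the sublevel set), pull boundedness of $\cY$ from the box $\bB$, infer boundedness of $\cL$ by the triangle inequality, and get closedness from continuity of $\Psi$. You have simply made the estimates explicit and added the (correct and useful) observation about why compactness must be stated in the $(\cY,\cL)$ variables rather than the Tucker factors.
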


\begin{proof}
The second term of $\Psi(\cY, \cL)$ is clearly bounded, which in turn implies that the first term is also bounded. Combined with the fact that $\cY \in \bB$, it follows that the pair $(\cY, \cL)$ is bounded. Thus, the set $\bOmega$ is bounded as well. 
Since $\Psi(\cY, \cL)$ is continuous, the sublevel set $\bOmega^0$ is closed. Therefore, $\bOmega^0$ is compact.
\end{proof}

\begin{theorem}
Assuming the set $\bOmega$ is convex, let $\bx_{t+1}:=(\cY_{t+1},\cL_{t+1})\in \bOmega$ denote the variables in the $(t+1)$-th iteration of the proposed algorithm. Then every limit point of $\{\bx_{t+1}\}$ is a stationary point of $\Psi(\cY,\cL)$, i.e.,
\begin{equation}
\lim _{t \rightarrow \infty} d\left(\bx_{t+1}, \bOmega^{*}\right)=\lim _{t \rightarrow \infty}\inf_{\bx^* \in \bOmega^{*}}\|\bx_{t+1}-\bx^*\|=0,
\end{equation}
where $\bOmega^{*}$ is the set of stationary points of $\Psi(\cY,\cL)$.
\end{theorem}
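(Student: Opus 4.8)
The plan is to instantiate the Block Successive Upper-bound Minimization result of Lemma~\ref{theorem: convergence}(b). I regard the problem as having $n=5$ blocks, namely $\bx_1=\cY$, $\bx_2=\bU_1$, $\bx_3=\bU_2$, $\bx_4=\bU_3$, $\bx_5=\cG$, with product feasible set $\bOmega=\bB\times\mathbb{R}^{m_2}\times\cdots\times\mathbb{R}^{m_5}$ (convex, as assumed, though not itself compact), and with surrogates $\Phi_0$ for the $\cY$-block and $\Phi_1,\dots,\Phi_4$ for the remaining blocks, as defined in \eqref{surrogate Y}--\eqref{surrogate G}. By Lemmas~\ref{lemma: Welsch's surrogate}, \ref{lemma:surrogate U} and \ref{lemma:surrogate G}, the updates of the proposed algorithm are exactly the cyclic minimizations \eqref{BSUM subproblem} with $\by_i=\bx_i^t$, so the iterates $\bx_{t+1}$ coincide with the BSUM iterates. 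To apply Lemma~\ref{theorem: convergence}(b) I must verify: (i) $\Psi$ is continuously differentiable and each $\Phi_i$ satisfies Assumption~\ref{Block MM assumption}; (ii) the sublevel set $\bOmega^0$ is compact; and (iii) each block subproblem has a unique minimizer (hence for at least $n-1$ blocks).

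For (i), I would first note that $\Psi$ is $C^\infty$: $\psi$ is a finite sum of exponentials of quadratics, and $\|\cL-\cY\|_F^2$ is polynomial in the entries of the five blocks. Assumption~\ref{Block MM assumption} is then exactly the content of Lemma~\ref{lemma:convergence1}: tangency \eqref{assumption1} and majorization \eqref{assumption2} follow from Lemma~\ref{lemma:inequality_Welsch} for the $\cY$-block and from the nonnegative proximal terms $\alpha_1\|\bU_i-\bC_i\|_F^2$ and $\alpha_2^3\|\cG-\cD\|_F^2$ (which vanish at $\bC_i=\bU_i$, $\cD=\cG$) for the other blocks; the first-order consistency \eqref{assumption3} reduces, since every function involved is differentiable in its block, to equality of block gradients at $\cZ=\cY$ (resp. $\bC_i=\bU_i$, $\cD=\cG$), and this holds because the proximal terms and the quantity $\|\cY-\cX\|_{\omega(\cZ)}^2-\|\cZ-\cX\|_{\omega(\cZ)}^2$ all have vanishing gradient there (in particular this forces equality of the directional derivatives along every direction, so a fortiori along feasible directions for the constrained $\cY$-block); and continuity \eqref{assumption4} is immediate.

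For (ii) and (iii) I would cite the lemmas already established. Lemma~\ref{lemma:convergence3} gives compactness of $\bOmega^0=\{(\cY,\cL):\Psi(\cY,\cL)\le\Psi(\cY_0,\cL_0)\}$: on a sublevel set $\|\cL-\cY\|_F^2$ is bounded, and $\cY\in\bB$ is bounded, so $\cL$ is bounded, and continuity of $\Psi$ makes $\bOmega^0$ closed; together with the descent property this also ensures every iterate lies in the compact set $\bOmega^0$, so $\{\bx_{t+1}\}$ is bounded and limit points exist. Lemma~\ref{lemma:convergence2} gives (iii): restricted to its active block, each $\Phi_i$ is a quadratic with positive-definite Hessian --- the $\|\cL-\cY\|_F^2$ term is strongly convex in $\cY$, and the proximal regularizers make $\Phi_1,\dots,\Phi_4$ strongly convex in $\bU_1,\dots,\bU_3$ and $\cG$ respectively --- so its minimizer is unique. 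With (i)--(iii) verified, Lemma~\ref{theorem: convergence}(b) applies and yields $\lim_{t\to\infty} d(\bx_{t+1},\bOmega^*)=0$, which is the claim.

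The step I expect to require the most care is not an estimate but the bookkeeping around the two descriptions of the iterate: BSUM operates on the five factor blocks $(\cY,\bU_1,\bU_2,\bU_3,\cG)$, whereas the theorem is phrased for the composite pair $(\cY,\cL)$ with $\cL=(\bU_1,\bU_2,\bU_3)\bcdot\cG$. Because Tucker factorizations are not unique, boundedness of $\cL$ does not yield boundedness of the factors, so one must be careful that the compactness hypothesis is imposed on the correct set (the $(\cY,\cL)$ sublevel set, as in Lemma~\ref{lemma:convergence3}) and that ``stationary point of $\Psi(\cY,\cL)$'' is interpreted through the composite map, so that the coordinatewise-stationarity conclusion of Lemma~\ref{theorem: convergence}(b) transfers to the stated form; once this identification is fixed, the remainder is a direct application.
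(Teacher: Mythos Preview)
Your proposal is correct and follows exactly the paper's approach: the paper's proof is the one-line ``Using Lemma~\ref{lemma:convergence1}, Lemma~\ref{theorem: convergence}, Lemma~\ref{lemma:convergence2}, and Lemma~\ref{lemma:convergence3}, the proof follows,'' and you have simply fleshed out how these lemmas feed into part (b) of Lemma~\ref{theorem: convergence}. Your final paragraph on the bookkeeping between the five-block factor description $(\cY,\bU_1,\bU_2,\bU_3,\cG)$ and the composite pair $(\cY,\cL)$ is a legitimate subtlety that the paper leaves implicit rather than resolves.
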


\begin{proof}
Using Lemma \ref{lemma:convergence1}, Lemma \ref{theorem: convergence}, Lemma \ref{lemma:convergence2}, and Lemma \ref{lemma:convergence3}, the proof follows.
\end{proof}






%
%


%
%

%



%





\ifCLASSOPTIONcaptionsoff
  \newpage
\fi



%
\bibliographystyle{IEEEtran}
\bibliography{IEEEabrv,ref}

%







